\definecolor{grey}{rgb}{.7,.7,.7}
\definecolor{blue}{rgb}{0,0,.8}
\definecolor{red}{rgb}{.8,0,0}
\definecolor{green}{rgb}{0,.4,0}
\definecolor{gold}{rgb}{0.8,0.6,0.1}
\definecolor{brown}{rgb}{0.8,0.4,0.1}
\def\marrow{{\marginpar[\hfill$\Rrightarrow$]{$\Lleftarrow$}}}
\def\md#1{\color{blue} {\textsc MD: }{\marrow\textsf #1} \normalcolor}
\declaretheorem[name=Theorem,numberwithin=section]{theorem} 
\newtheorem*{definition*}{Definition}
\newtheorem{definition}[theorem]{Definition}
\newtheorem*{lemma*}{Lemma}
\newtheorem{lemma}[definition]{Lemma}
\newtheorem*{algorithm*}{Algorithm}
\newtheorem*{construction*}{Construction}
\newtheorem*{proposition*}{Proposition}
\newtheorem{proposition}[definition]{Proposition}
\newtheorem*{obs*}{Observation}
\newtheorem*{fact*}{Fact}
\newtheorem*{remark*}{Remark}
\newtheorem{remark}[definition]{Remark}
\newtheorem*{quest*}{Question}
\newtheorem*{corollary*}{Corollary}
\newtheorem{corollary}[definition]{Corollary}
\newtheorem*{conjecture*}{Conjecture}
\newtheorem*{question*}{Question}
\newtheorem*{example*}{Example}
\newtheorem*{hypothesis*}{Hypothesis}
\newtheorem{hypothesis}[definition]{Hypothesis}
\newcommand{\BM}{\operatorname{BM}}
\newcommand{\C}{\mathcal{C}}
\newcommand{\R}{\mathbb{R}}
\newcommand{\lip}{\operatorname{Lip}}
\newcommand{\dist}{\operatorname{dist}}
\newcommand{\N}{\mathbb{N}}
\newcommand{\leb}{\mathcal{L}}
\DeclareMathOperator{\diam}{diam}
\newcommand{\abs}[1]{\left|#1\right|}
\newcommand{\lnorm}[2]{\left\|#2\right\|_#1}
\newcommand{\norm}[1]{\left\|#1\right\|}
\newcommand{\skp}[1]{\langle#1\rangle}
\newcommand{\proj}{\operatorname{proj}}
\newcommand{\Diff}{\operatorname{Diff}}
\newcommand{\set}[1]{\left\{#1\right\}}
\newcommand{\Exp}{\mathbb{E}}
\newcommand{\hl}{}
\newcommand{\Graph}{\operatorname{Graph}}
\newcommand{\mylabel}[2]{#2\def\@currentlabel{#2}\label{#1}}
\title{Typical differentiability within an exceptionally small set.}
\author{Michael Dymond\thanks{The author acknowledges the support of Austrian Science Fund (FWF): P 30902-N35.}}
\begin{document}
	\maketitle
\begin{abstract}
	We verify the existence of a purely unrectifiable set in which the typical Lipschitz function has a large set of differentiability points. The example arises from a construction, due to Csörnyei, Preiss and Ti\v ser, of a universal differentiability set in which a certain Lipschitz function has only a purely unrectifiable set of differentiability points.
\end{abstract}

\section{Introduction.}
Whilst Rademacher's Theorem asserts that any set of points of non-differentiability of a Lipschitz function on Euclidean space is null, the sets most neglible from the point of view of differentiability problems are, as described in the work \cite{acp2010differentiability} of Alberti, Csörnyei and Preiss, those sets in which some Lipschitz function fails to have a single directional derivative. In this paper we show that even these most exceptional sets can nonetheless provide surprisingly many points of differentiability for surprisingly many Lipschitz functions. 

In \cite{acp2010differentiability} it is established that the negligible sets referred to above are precisely the class of \emph{uniformly purely unrectifiable} sets. A subset $P$ of Euclidean space is said to be \emph{purely unrectifiable} if $P$ intersects every $\mathcal{C}^{1}$ curve in a set of one-dimensional Lebesgue measure zero. The class of \emph{uniformly} purely unrectifiable sets are defined according to a formally stronger condition (see \cite[Definition~1.4 and Remark~1.7]{maleva_preiss2018}) and for a significant time it remained an open question whether these two classes coincide. However, a recent announcement of M\'athe answers this question positively for Borel sets (\cite[Remark~1.7]{maleva_preiss2018}). In the present work, we adopt the convention of restricting both notions to Borel sets, that is, we add Borel as a condition to the definitions of pure and uniform pure unrectifiability. Thus, the notions of pure und uniform pure unrectifiability coincide and we will, from this point onwards, refer only to purely unrectifiable sets.

Current investigations of purely unrectifiable sets have established that these sets are most exceptional with respect to differentiability, not only in the sense of non-availability of directional derivatives. Preiss and Maleva prove in \cite[Theorem~1.13]{maleva_preiss2018} that any purely unrectifiable set is contained in a set \hl{of} points where non-differentiability of some Lipschitz function occurs in its strongest possible form. Any purely unrectifiable set $P\subseteq \R^{d}$ admits a $1$-Lipschitz function $f\colon \R^{d}\to\R$ such that for every $x\in P$ \emph{every} linear mapping $\R^{d}\to\R$ with norm at most one masquerades as the derivative of $f$ at $x$. More precisely,
\begin{equation*}
\liminf_{r\to 0}\sup_{\norm{y}\leq r}\frac{\abs{f(x+y)-f(x)-\langle{e,y}\rangle}}{r}=0
\end{equation*}
holds for all $x\in P$ and $e\in\R^{n}$ with $\norm{e}\leq 1$. 

\hl{An even more hostile class of sets for differentiability could be loosely defined as those sets in which not just `some', but rather `many' Lipschitz functions fail to have points of differentiability. In the 1990's, Preiss and Ti\v ser~\cite{preiss_tiser94} characterised those analytic subsets of the interval $[0,1]$ in which the typical Lipschitz function $[0,1]\to\R$ has no points of differentiability. Very recently, the author and Maleva~\cite{dymond_maleva2019dichotomy} generalised this result to spaces of Lipschitz functions $[0,1]^{d}\to\R$ for all Euclidean dimensions $d\geq 1$. We discuss these works in more detail shortly, but first, let us make precise, what is meant by a typical Lipschitz function:} In what follows we consider for a compact metric space $K$ the space $\lip_{1}(K,\R^{l})$ of Lipschitz mappings $f\colon K\to\R^{l}$ with $\lip(f)\leq 1$. When $l=1$, as it will be for almost all of this work, we shorten the notation to $\lip_{1}(K)$. We view $\lip_{1}(K,\R^{l})$ as a complete metric space equipped with the supremum metric
\begin{equation*}
d_{\infty}(f,g):=\lnorm{\infty}{f-g},\qquad f,g\in\hl{\lip_{1}(K,\R^{l})}.
\end{equation*}
The word typical is used in this paper in the sense of the Baire Category Theorem. Thus, we say that typical functions \hl{(or the typical function)} in $\lip_{1}(K)$ have \hl{(has)} a certain property if the set of those functions having that property is a residual subset of $\lip_{1}(K)$.

\hl{For a compact metric space $K$, another natural means of giving the class of Lipschitz functions $K\to\R$ a complete metric space structure is to consider the space $\lip(K)$ of all such Lipschitz functions (not just those with Lipschitz constant at most one) equipped with the metric
	\begin{equation*}
	d_{\lip}(f,g)=\lnorm{\infty}{f-g}+\lip(f-g),\qquad f,g\in\lip(K,\R^{l}).
	\end{equation*}
However, this space has significantly less desirable properties. For a start, it is non-separable. Moreover, for differentiability questions (when say $K=[0,1]^{d}$), this space is much less appealing because smooth functions in this space are not dense, in fact differentiable functions form a nowhere dense, closed set, as discussed in \cite{preiss_tiser94}.}

\hl{For} Lipschitz functions on the \hl{Euclidean cube}, i.e. in the function \hl{spaces $\lip_{1}([0,1]^{d})$ for $d\in\N$}, \hl{differentiability of the typical function inside analytic sets} is well understood, due to the \hl{aforementioned} works \cite{preiss_tiser94} and \cite{dymond_maleva2019dichotomy}. In the former, Preiss and Ti\v ser characterise \hl{analytic} subsets of the interval $[0,1]$ in which the typical function $f\in\lip_{1}([0,1])$ is nowhere differentiable; they prove that the sets with this property are precisely those contained in an $F_{\sigma}$ set of Lebesgue measure zero. \hl{In the latter, the author and Maleva generalise this characterisation to all Euclidean dimensions: they prove that an analytic subset of $[0,1]^{d}$ contains no points of differentiability of the typical function in $\lip_{1}([0,1]^{d})$ if and only if it can be covered by countably many closed, purely unrectifiable sets. This statement forms one half of a dichotomy of analytic sets established in \cite{dymond_maleva2019dichotomy}. To complete the dichotomy, the author and Maleva~\cite{dymond_maleva2019dichotomy} show that any analytic set failing the above coverability condition captures points of differentiability of the typical function in $\lip_{1}([0,1]^{d})$. Merlo~\cite{merlo}, another very recent work, proves a dichotomy of a similar nature, with differentiability replaced by directional differentiability. Additionally, Merlo~\cite{merlo} provides an independent proof of the non-differentiability part~\cite[Theorem~2.7]{dymond_maleva2019dichotomy}, of the dichotomy in \cite{dymond_maleva2019dichotomy}.} 

In particular, the result \cite[Theorem~2.1]{dymond_maleva2019dichotomy} permits examples of purely unrectifiable sets inside $(0,1)^{d}$ in which the typical $f\in \lip_{1}([0,1]^{d})$ has a point of differentiability. Indeed, any relatively residual and null subset of some line segment in $(0,1)^{d}$ would provide such an example. This is a \hl{somewhat} surprising outcome: Purely unrectifiable sets are so tiny that they see only the most terrible occurences of non-differentiability of some Lipschitz function. However, these exceptional sets may nonetheless capture points of differentiability of very many Lipschitz functions. 

Although the results of \hl{\cite{dymond_maleva2019dichotomy}} may be used to verify existence of purely unrectifiable sets capturing a point of differentibility of the typical Lipschitz function, they do not allow for any non-trivial, \hl{measure-theoretic, lower} bound~\footnote{\hl{For an incomparable topological description of the size of captured sets of differentiability points, see \cite[Remark~2.9]{dymond_maleva2019dichotomy}.}}. on the size of the set of captured differentiability points. Due to the fundamental Besicovitch-Federer Projection Theorem~\hl{\cite[Theorem~18.1]{mattila_1995}}, one-dimensional Hausdorff measure is an important means of distinction between purely unrectifiable sets. The theorem implies that any purely unrectifiable set of $\sigma$-finite one-dimensional Hausdorff measure has projections of Lebesgue measure zero on almost every one-dimensional subspace. Our main result verifies the existence of a purely unrectifiable set in which the typical Lipschitz function has a particularly large set of differentiability points, where large is understood in the sense of the Besicovitch-Federer Projection Theorem.
\begin{theorem}\label{thm:pu_tds}
	There exists a \hl{(Borel)} purely unrectifiable set $P\subseteq [0,1]^{2}$ such that the typical function $f\in\lip_{1}([0,1]^{2})$ has points of differentiability in $P$ and moreover the set $\Diff(f)\cap P$ of these points is large in the following senses:
	\begin{enumerate}[(a)]
		\item\label{non_sig_fin} $\Diff(f)\cap P$ has non-$\sigma$-finite one dimensional Hausdorff measure.
		\item\label{all_proj} $\Diff(f)\cap P$ projects in every direction to a set of positive Lebesgue measure, that is,
		\begin{equation*}
		\leb\left(\langle{\Diff(f)\cap P,e}\rangle\right)>0
		\end{equation*}
		for every $e\in S^{1}$.
	\end{enumerate}	 
\end{theorem}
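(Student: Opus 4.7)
The approach uses the Csörnyei--Preiss--Tišer (CPT) construction alluded to in the abstract, which supplies a universal differentiability set $S\subseteq[0,1]^{2}$ together with a $1$-Lipschitz function $g\colon[0,1]^{2}\to\R$ such that $P:=\Diff(g)\cap S$ is purely unrectifiable. This $P$ will be the set advertised in the theorem; its pure unrectifiability is built into the CPT result. The proof then splits into two parts: first, show that $P$ itself already satisfies (a) and (b); second, show that for typical $f\in\lip_{1}([0,1]^{2})$ the differentiability set $\Diff(f)\cap P$ is essentially as large as $P$.

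For the first part I would enter the inner workings of the CPT construction. There $S$ is assembled from a countable family of small straight line segments $\{L_{n}\}_{n\in\N}$ at nested scales with directions dense in $S^{1}$, and $g$ is constructed so that its full-differentiability points inside $S$, while purely unrectifiable, are distributed across the $L_{n}$'s densely enough in every direction. The aim is to show that the projection of $P$ onto any $e\in S^{1}$ contains a Borel subset of positive Lebesgue measure, giving (b). Property (a) is then automatic: by the Besicovitch--Federer projection theorem a purely unrectifiable set of $\sigma$-finite $\mathcal{H}^{1}$-measure projects to a null set in almost every direction, which would contradict (b).

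For the second part I would parametrise each $L_{n}$ affinely by $\gamma_{n}\colon[0,1]\to L_{n}$ and use the pullback $f\mapsto f\circ\gamma_{n}$ to transfer between $\lip_{1}([0,1]^{2})$ and $\lip_{1}([0,1])$, exactly as described in the excerpt. Applying the Preiss--Tišer result \cite{preiss_tiser94} to $f\circ\gamma_{n}$, together with \eqref{eq:evry_1} and Fitzpatrick's Corollary~2.6, shows that for typical $f\in\lip_{1}([0,1]^{2})$ every point of $L_{n}$ at which $f\circ\gamma_{n}$ is differentiable is in fact a full-differentiability point of $f$. Rademacher's theorem applied to $f\circ\gamma_{n}$ gives $\mathcal{L}^{1}$-a.e.\ differentiability of $f\circ\gamma_{n}$, so $\Diff(f)\cap L_{n}$ is of full $\mathcal{L}^{1}$-measure in $L_{n}$ for typical $f$. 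Intersecting this residual condition over $n\in\N$, one obtains a residual set of $f\in\lip_{1}([0,1]^{2})$ for which $P\setminus\Diff(f)$ has $\mathcal{L}^{1}$-null trace on every $L_{n}$, and therefore projects onto a Lebesgue null subset of every $e\in S^{1}$. Combined with the first part this yields (b) for $\Diff(f)\cap P$, and (a) then follows by applying Besicovitch--Federer to the purely unrectifiable set $\Diff(f)\cap P$.

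The principal obstacle is the first part: showing that the differentiability set of the CPT function $g$ inside $S$, although deliberately constructed to be purely unrectifiable, nonetheless projects to positive Lebesgue measure in every direction. This is a delicate geometric statement that requires a careful quantitative inspection of the CPT construction and is the technical heart of the argument; the Baire-category transfer to typical $f$ in the second part is then routine given the interval result of Preiss and Tišer and Fitzpatrick's criterion.
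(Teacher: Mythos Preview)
Your second part contains a genuine gap that breaks the argument. You want to conclude that $P\setminus\Diff(f)$ projects to a null set in every direction from the fact that $P\setminus\Diff(f)$ has $\mathcal{L}^{1}$-null trace on every segment $L_{n}$. But this implication is false, and in fact your own first part furnishes the counterexample: the set $P$ itself is purely unrectifiable, so $P\cap L_{n}$ has measure zero for \emph{every} $n$, yet you are trying to prove that $P$ has positive projection in every direction. Having null trace on a countable family of segments says nothing about projections. More fundamentally, because $P$ is purely unrectifiable, knowing that $\Diff(f)\cap L_{n}$ has full measure in $L_{n}$ gives you no information whatsoever about $\Diff(f)\cap P$: the sets $P\cap L_{n}$ are all null, so the Preiss--Ti\v ser/Rademacher information on lines is invisible to $P$. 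The paper's introduction flags exactly this: the curve-transfer method ``does not allow for any non-trivial bound on the size of the set of captured differentiability points.''

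The paper's mechanism is quite different and does not attempt to control $\Diff(f)\cap P$ directly via lines. Instead it arranges (by modifying the CPT construction) that the auxiliary Lipschitz function $h$ is \emph{differentiable} everywhere on $P$ and \emph{uniformly non-differentiable} on each of countably many $F_{\sigma}$ pieces covering $\widetilde{D}\setminus P$, where $\widetilde{D}\subseteq[0,1]^{2}$ is a compact universal differentiability set. A Banach--Mazur game argument (Lemma~\ref{lemma:Fsig_unif_ndiff}) then shows that for typical $g\in\lip_{1}([0,1]^{2})$ the function $g+h$ inherits this non-differentiability on $\widetilde{D}\setminus P$; hence $\Diff(g+h)\cap\widetilde{D}\subseteq P$. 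Since $h$ is differentiable on $P$, one gets $\Diff(g)\cap P\supseteq\Diff(g+h)\cap\widetilde{D}$, and the latter set has positive projections in every direction by a general lemma on universal differentiability sets. Part~\eqref{non_sig_fin} then follows from Besicovitch--Federer exactly as you say. The crucial ingredients you are missing are the preservation-of-non-differentiability lemma for typical perturbations on $F_{\sigma}$ sets, and the passage to a \emph{compact} universal differentiability set so that this lemma applies.
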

Note that \eqref{non_sig_fin} actually follows from \eqref{all_proj} via the Besicovitch-Federer Projection Theorem \hl{and the fact that sets $\Diff(f)$ of differentiability points are Borel (\cite[Corollary~3.5.5]{LPT2012frechet})}.

The proof of Theorem~\ref{thm:pu_tds} is based on the modern theory of universal differentiability sets which originates from the natural question of whether the classical Rademacher's Theorem for Lipschitz mappings admits a converse and the first negative answer to this question given by Preiss~\cite{PREISS1990312}. The natural converse to Rademacher's Theorem proposes that any Lebesgue null set $E\subseteq \R^{d}$ is contained in the set of non-differentiability points of some Lipschitz mapping $f\colon \R^{d}\to\R^{l}$. Whilst \cite{PREISS1990312} provides a counterexample for the case of real valued functions on the plane, i.e. the case $d=2$, $l=1$, major breakthroughs \cite{preiss_speight2013}, \cite{acp2010differentiability}, \cite{CJ}, in the last decade have now completely resolved the question for general dimensions. The converse is valid if and only if $l\geq d$, that is, if the dimension of the target space is at least that of the domain.

Thus, if $1\leq l<d$, the Euclidean space $\R^{d}$ contains Lebesgue null sets which capture a point of differentiability of \emph{every} Lipschitz mapping $\R^{d}\to\R^{l}$. Sets with the latter property are given the name \emph{universal differentiability sets}, first proposed in \cite{Dore_Maleva2}. These surprising objects have attracted much new research attention and have been studied in an array of different settings, for example Euclidean spaces (\cite{Dore_Maleva1}, \cite{Dore_Maleva2}, \cite{dymond_maleva2016}, \cite{dymond2017structure}, \cite{preiss_speight2013}), Banach spaces \cite{Dore_Maleva3}, and metric groups (\cite{pinamonti_speight2017uds}, \cite{ledonne_pinamonti_speight2017universal}).

For a given universal differentiability set $E\subseteq \R^{d}$ it is natural to ask how large the sets
\begin{equation*}
E\cap \Diff(f),\qquad f\colon\R^{d}\to\R,\,\text{Lipschitz},
\end{equation*}
are as subsets of $E$. Previous work~\cite{dymond2017structure} of the author verifies that these sets are large in a topological sense. Any universal differentiability set can be reduced to a `kernel' in which the set of differentiability points of any given Lipschitz function form a dense subset. In contrast, an example provided by Csörnyei, Preiss and Ti\v ser~\cite{CPT_2005}, demonstrates that these sets $E\cap \Diff(f)$ of captured differentiability points can be surprisingly tiny subsets of $E$ in a measure theoretic sense, namely they can be purely unrectifiable. Recall from previous discussion in this introduction that purely unrectifiable sets are very far away from being universal differentiability sets, hence purely unrectifiable subsets of $E$ can be thought of as small subsets. 

The aforementioned example of Cs\"ornyei, Preiss and Ti\v ser~\cite{CPT_2005} and its construction provide the basis of the proof of Theorem~\ref{thm:pu_tds}. The construction produces a universal differentiability set $E\subseteq\R^{2}$, a purely unrectifiable subset $P\subseteq E$ and a Lipschitz function $h\colon\R^{2}\to \R$ so that all differentiability points of $h$ in the set $E$ are contained in $P$. By modification of the construction, we ensure that the purely unrectifiable set $P$ additionally captures many points of differentiability of the typical Lipschitz function. Our argument stems from the idea that most points of non-differentiability of $h$ are preserved for the function $g+h$ for typical $g\in \lip_{1}([0,1]^{2})$. However, there are rather too many such points in the $G_\delta$, dense set $E$ given by \cite{CPT_2005} in order to preserve non-differentiability at all of them. Thus, we crucially pass to a compact universal differentiability set $\hl{Y}\subseteq E$, given by a construction of Dor\'e and Maleva in \cite{Dore_Maleva2}. In this much smaller set we are able to preserve non-differentiability of $h$ everywhere in the set $\hl{Y}\setminus P$ for functions $g+h$ for the typical $g\in\lip_{1}([0,1]^{2})$. Since $\hl{Y}$ is a universal differentiability set, this leads to the conclusion that $g+h$ has points of differentiability in $P$. In other words, $P$ captures points of differentiability of the typical Lipschitz function in the shifted space $h+\lip_{1}([0,1]^{d})$. However, since differentiability of a sum $g+h$ does not imply differentiability of $g$, this is not enough to verify Theorem~\ref{thm:pu_tds}. Moreover, we caution that the typical behaviour in a shifted $\lip_{1}$ space can be very different to that in the natural space; Lemma~\ref{lemma:Fsig_unif_ndiff} of the present work may be used to produce examples demonstrating this. To verify that $P$ additionally captures points of differentiability of the typical function in $\lip_{1}([0,1]^{d})$, we adapt the construction of \cite{CPT_2005} so that the function $h$ is differentiable at almost all points of the set $P$. Differentiability of $g+h$ at such points then implies differentiability of $g$. 

The conclusions \eqref{non_sig_fin} and \eqref{all_proj} of Theorem~\ref{thm:pu_tds} come from the observation that the differentiability points of the typical $g\in\lip_{1}([0,1]^{d})$ inside of $P$ correspond to the differentiability points of the function $g+h$ inside the (necessarily much larger) universal differentiability set $\hl{Y}$. The latter set of points is large in the sense of \eqref{all_proj} due to \cite[Lemma~2.1]{dymond_maleva2016}. Although purely unrectifiable sets are regarded as completely opposite to universal differentiability sets, conclusions \eqref{non_sig_fin} and \eqref{all_proj} of Theorem~\ref{thm:pu_tds} show that they can be surprisingly close. For the typical function $f\in\lip_{1}([0,1]^{2})$ we find just as many points of differentiability of $f$ in the senses of \eqref{non_sig_fin} and \eqref{all_proj} inside the purely unrectifiable set $P$ as one might expect to find inside a universal differentiability set.

A further objective of this work is to provide a simplification of the argument in \cite{CPT_2005}, based on recent advances in the theories of universal differentiability and uniformly purely unrectifiable sets. There are two main tools in the simplification: Firstly, we make use of the recently announced result of M\'athe, that the notions of pure unrectifiability and uniform pure unrectifiability coincide. Since the condition for pure unrectifiability is significantly easier to verify, this immediately removes much of the complexity of the argument in \cite{CPT_2005}. The second main way in which we achieve a simplification is in a more special choice of the universal differentiability set $E$. We take $E$ as a universal differentiability set of the form described in \cite[Example~4.4]{maleva_preiss2018}: A $G_\delta$ set containing all lines from a dense subset of the set of all lines with directions inside a small cone. 
	
	Whilst we aspire to provide a more accessible proof of the result in \cite{CPT_2005}, we additionally obtain a stronger statement. We show that inside a universal differentiability set in $\R^{2}$ even directional derivatives of a Lipschitz function may be rather scarce. 
	\begin{theorem}\label{thm:main_result}
		For every $\alpha>0$ there exists a universal differentiability set $E\subseteq \R^{2}$ with the following property. There exists a Lipschitz function $h\colon \R^{2}\to\R$, and a double sided cone $\widehat{C}\subseteq S^{1}$ of width at most $\alpha$ such that the set of points in $E$ where $h$ has a directional derivative in any direction in $S^{1}\setminus \widehat{C}$ is contained in a purely unrectifiable set.
	\end{theorem}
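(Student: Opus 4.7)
The plan is to follow the iterative oscillation construction of Csörnyei, Preiss and Tišer~\cite{CPT_2005}, streamlined via two ingredients highlighted in the introduction: the universal differentiability set of \cite[Example~4.4]{maleva_preiss2018}, and Máthé's theorem reducing pure unrectifiability to its uniform counterpart. Given $\alpha>0$, I would fix a double-sided cone $\widehat{C}\subseteq S^{1}$ of angular width at most $\alpha$ together with a countable family $\{L_{n}\}_{n\in\N}$ of line segments in $[0,1]^{2}$ with directions in $\widehat{C}$, dense in the family of all such segments, and set $E=\bigcap_{n}E_{n}$, each $E_{n}$ an open tube around $\bigcup_{k\le n}L_{k}$ of sufficiently small width. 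With tubes chosen aggressively enough, $E$ is a $G_{\delta}$ universal differentiability set and moreover $\haus^{1}(\gamma\cap E)=0$ for every $C^{1}$ curve $\gamma$ whose unit tangent lies outside $\widehat{C}$ at $\haus^{1}$-a.e.\ point. I would then construct $h=\sum_{n\ge 1}h_{n}$ iteratively, with $h_{n}$ compactly supported in $E_{n}$ and consisting of low-amplitude, high-frequency oscillations in some direction $e_{n}\in S^{1}\setminus\widehat{C}$. The sequence $\{e_{n}\}$ is arranged to sweep densely through $S^{1}\setminus\widehat{C}$; the amplitudes and frequencies are calibrated so that the series converges uniformly to a Lipschitz function, each $h_{n}$ varies slowly along directions within $\widehat{C}$, and the restrictions $h|_{L_{k}}$ remain well-behaved.

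The main obstacle is to show that the exceptional set
\begin{equation*}
P:=\{\,x\in E:\exists\,e\in S^{1}\setminus\widehat{C}\text{ such that }h'(x;e)\text{ exists}\,\}
\end{equation*}
is purely unrectifiable. By Máthé's theorem it suffices to prove $\haus^{1}(P\cap\gamma)=0$ for every $C^{1}$ curve $\gamma$, and I would split $\gamma$ according to whether $\gamma'(t)\in\widehat{C}$. Where $\gamma'(t)\notin\widehat{C}$, the choice of $E$ forces $\gamma(t)\in E$ only on a $\haus^{1}$-null set, so this part contributes nothing to $P\cap\gamma$. Where $\gamma'(t)\in\widehat{C}$, the curve is locally uniformly approximated by segments $L_{k}$ of the construction, and a multi-scale analysis of the transverse oscillations $h_{n}$ — showing that these oscillations survive the approximation error between $\gamma$ and the $L_{k}$ — yields that for every $e\in S^{1}\setminus\widehat{C}$ the directional derivative $h'(\gamma(t);e)$ fails at $\haus^{1}$-a.e.\ such $t$; a separability reduction over a countable dense subset of $S^{1}\setminus\widehat{C}$, after enlarging $P$ to a Borel superset, then completes the argument. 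The decisive simplification over \cite{CPT_2005} is that Máthé's theorem permits this multi-scale analysis to be carried out with pointwise rather than uniform estimates.
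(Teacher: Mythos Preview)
Your outline matches the paper's architecture --- a $G_\delta$ set $E$ built from tubes around lines with directions in $\widehat C$, an oscillating Lipschitz $h$, and a case split on the tangent direction of $\gamma$ --- and the easy case (tangent outside $\widehat C$) is exactly Lemma~\ref{lemma:easy_curves}. But two essential mechanisms are missing, and without them the hard case cannot be carried out.

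First, the paper's function has the form $h(z)=\sum_k 2^{-m_{k-1}(z)}\sigma_{k-1}(z)\varphi_k(z)$ with a \emph{point-dependent} amplitude $2^{-m_{k-1}(z)}$ (halved each time the running gradient $Dh_k(z)$ has drifted by a prescribed amount) and an alternating sign $\sigma_{k-1}(z)=(-1)^p$ recording the parity of the number of earlier strips containing $z$. The sign cancels the $w^\perp$-components of successive gradients, which is what keeps $h$ Lipschitz; the point-dependent amplitude lets the oscillation persist at a fixed scale on the set $\{z:\sup_k m_k(z)\le m\}$, forcing non-differentiability in \emph{every} direction outside $\widehat C$ there (Lemma~\ref{lemma:cover_diff}). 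Your construction, with a dense sequence of oscillation directions $e_n\in S^1\setminus\widehat C$ and globally calibrated amplitudes, has no such mechanism: to keep the sum Lipschitz the amplitudes must decay, and then for a fixed direction $e$ the tail contributes negligibly, so non-differentiability in direction $e$ is not forced at all points of $E$.

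Second, and more seriously, your treatment of the hard case simply restates the conclusion. The exceptional set is (up to an easy piece $G$, handled by Lemma~\ref{lemma:G_zero}) the set $H=\{z\in E:m_k(z)\to\infty\}$, and on $H\setminus G$ the function $h$ is actually \emph{differentiable} (Lemma~\ref{lemma:h_diff}) --- there are no oscillations left to ``survive''. What must be shown is that $\gamma^{-1}(H)$ is null for curves $\gamma$ with tangent in the cone. The paper proves that $z\in H$ forces the alternating sum $\sum_s(-1)^s\langle w,e_{k_s(z)}^\perp\rangle/\langle w,e_{k_s(z)}\rangle$ to diverge (Lemma~\ref{lemma:der_grow}), and then observes that along such a $\gamma$ this sum is, after a conditional-expectation correction (Lemma~\ref{lemma:outsideDp}), the alternating sum of an $L^2$-bounded martingale with respect to a weighted measure (Proposition~\ref{thm:mrtgl}, Lemma~\ref{lemma:alt_sum_mart}); by Doob's inequality it diverges only on a null set (Corollary~\ref{cor:kolm}). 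This martingale argument is the core of the proof and is not replaced by approximating $\gamma$ by the segments $L_k$: non-differentiability of $h$ at a point of $L_k$ says nothing about non-differentiability at nearby points, and membership in $H$ depends on the entire infinite sequence $(k_p(z))_p$ of strips containing $z$, which is highly unstable under perturbation of $z$.
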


\section{Preliminaries and Notation.}
We use the term~$\C^{1}$-curve to refer to a $\C^{1}$ mapping $\gamma$ from a closed interval $I\subseteq \R$ to $\R^{d}$ satisfying $\gamma'(t)\in S^{d-1}$ for all $t\in I$. Here $\gamma'(t)$ denotes the derivative of $\gamma$ at the point $t$ (or the one-sided derivative if $t$ is an endpoint). We identify this derivative with an element of $\R^{d}$ (or in this case $S^{d-1}$) in the standard way. A Borel set $P\subseteq \R^{d}$ is said to be \emph{purely unrectifiable} if for every $\C^{1}$-curve $\gamma\colon I\to\R^{d}$ the set $\gamma^{-1}(P)$ has Lebesgue measure zero. 

For $w\in S^{d-1}$ and $\alpha\in[0,1]$ we define a set
\begin{equation*}
C(w,\alpha):=\left\{v\in S^{d-1}\colon \langle{v,w}\rangle \geq 1-\alpha\right\},
\end{equation*} 
and refer to this set as the \emph{cone} around $w$ of width $\alpha$. We additionally define
\begin{equation*}
\widehat{C}(w,\alpha)=\left\{v\in S^{d-1}\colon \left|\langle{v,w}\rangle\right| \geq 1-\alpha\right\}
\end{equation*}
and call this set the \emph{double sided cone} around $w$ of width $\alpha$. Observe that $\widehat{C}(w,\alpha)=C(w,\alpha)\cup C(-w,\alpha)=C(w,\alpha)\cup-C(w,\alpha)$.

For a function $f\colon \R^{d}\to\R$ and $x\in\R^{d}$ we write $Df(x)$ for the derivative of $f$ at the point $x$ if it exists and we identify this with the unique element of $\R^{d}$ satisfying $Df(x)=\langle{Df(x),\cdot}\rangle$. To detect non-differentiability of $f$, we utilise the following test quantities. Given a point $z\in\R^{2}$ a direction $e\in S^{1}$ and $\varepsilon>0$ we consider the quantity
	\begin{equation}\label{def:zeta}
	\zeta(f,z,\varepsilon,e):=\sup\abs{\frac{f(x+te)-f(x)}{t}-\frac{f(y+se)-f(y)}{s}},
	\end{equation}
	where the supremum is taken over all segments of the form $[x,x+te]$ , $[y,y+se]$ satisfying $z\in [x,x+te]\cap[y,y+se]$ and $s,t\in[-\varepsilon,\varepsilon]\setminus\left\{0\right\}$. We further consider the related quantity $\Upsilon(f,z,\varepsilon)$ where the variable $e\in S^{1}$ is `moved inside the supremum', that is
	\begin{equation}\label{def:Upsilon}
	\Upsilon(f,z,\varepsilon):=\sup_{e\in S^{1}} \zeta(f,z,\varepsilon,e).
	\end{equation}
Roughly speaking, both quantities $\zeta(f,z,\varepsilon,e)$ and $\Upsilon(f,z,\varepsilon)$ reflect non-differentiability of $f$ at $z$ at scale $\varepsilon$. Severity of non-differentiability of $f$ at $z$ is sharply quantified by their limiting behaviour as $\varepsilon\to 0$. 
\begin{restatable}{proposition}{propzetacharactdiff}\label{prop:zeta_charact_diff} 
Let \hl{$A\subseteq \R^{d}$ be open}, $f,\hl{f_{1},f_{2}}\colon \hl{A}\to\R$ be Lipschitz functions, $z\in \hl{A}$ and $e\in S^{d-1}$. Then,
	\begin{enumerate}[(a)]
		\item\label{zeta_charact_diff} $\displaystyle\lim_{\varepsilon\to 0}\zeta(f,z,\varepsilon,e)=0 \quad \Leftrightarrow \quad$ $f$ has a directional derivative at $z$ in direction $e$. 
		\item\label{ups_non_diff}  $\displaystyle\limsup_{\varepsilon\to 0}\Upsilon(f,z,\varepsilon)>0\quad \Rightarrow \quad $ $\exists\, u\in S^{d-1}$ such that $\displaystyle\limsup_{\varepsilon\to 0}\zeta(f,z,\varepsilon,u)>0$.
		\hl{\item\label{mono} The function $(0,\infty)\to(0,\infty)$, $\varepsilon\mapsto \Upsilon(f,z,\varepsilon)$ is increasing.}
		\hl{\item \label{sum_diff_sup_ineq} 
		$\Upsilon(f_{1}+f_{2},z,\varepsilon)\geq \Upsilon(f_{1},z,\varepsilon)-\Upsilon(f_{2},z,\varepsilon)$.}
	\end{enumerate}	
\end{restatable}
The proof of Proposition~\ref{prop:zeta_charact_diff} is a standard exercise \hl{in differentiability and the definitions \eqref{def:Upsilon} and \eqref{def:zeta}}. 
The next lemma plays a key part in the proof of Theorem~\ref{thm:pu_tds}. It allows us to preserve non-differentiability of a Lipschitz function $h$ at many points after adding a typical function~$g$.

\hl{For the proof of Lemma~\ref{lemma:Fsig_unif_ndiff} we will require a version of the Banach-Mazur game, described in \cite[Section~8.H]{kechris2012classical}. We recall the details here: 
\paragraph{\hl{The Banach-Mazur game $\BM(A,X)$:}}	
Let $X$ be a non-empty topological space and $A$ be a subset of $X$. Two players, Player~I and Player~II, take it in turns to choose non-empty, open subsets of $X$, denoted by $U_{n}$ and $V_{n}$. Player~I begins the game by choosing the set $U_{1}\subseteq X$ and then Player~II responds by choosing $V_{1}\subseteq U_{1}$. Then Player~I chooses $U_{2}\subseteq V_{1}$ and Player~II chooses $V_{2}\subseteq U_{2}$ and so on. Thus, the game produces a sequence of non-empty, open sets
	\begin{equation*}
	X\supseteq U_{1}\supseteq V_{1}\supseteq U_{2}\supseteq V_{2}\supseteq \ldots,
	\end{equation*}
	where for each $n\in\N$ the set $U_{n}$ is referred to as the $n$-th move of Player~I and the set $V_{n}$ as the $n$-th move of Player~II. We say that Player~II wins the game if $\bigcap_{n=1}^{\infty}V_{n}\subseteq A$, or equivalently, if $\bigcap_{n=1}^{\infty}U_{n}\subseteq A$. Otherwise Player~I wins.

The important fact about the Banach-Mazur game that we will require is it that it can be used to characterise residual sets. More precisely, a subset $A$ of a non-empty topological space $X$ is residual if and only if Player~II has a winning strategy in the Banach-Mazur game $\BM(A,X)$, \cite[Thm~8.33]{kechris2012classical}.}

\begin{lemma}\label{lemma:Fsig_unif_ndiff}
	Let $K\subseteq \hl{(}0,1\hl{)}^{d}$ be an $F_{\sigma}$ set, $\sigma>0$ and $h\colon[0,1]^{d}\to\R$ be a Lipschitz function satisfying
	\begin{equation}\label{eq:non_diff_h}
	\limsup_{\varepsilon\to 0}\Upsilon(h,z,\varepsilon)\geq \sigma.
	\end{equation}
	for all $z\in K$. Then for typical $g\in\lip_{1}([0,1]^{d})$ we have
	\begin{equation}\label{eq:bm_target}
	\limsup_{\varepsilon\to 0}\Upsilon(h+g,z,\varepsilon)\geq \sigma
	\end{equation}	
	for all $z\in K$.
\end{lemma}
\begin{proof}
	We may assume that $K$ is compact. Let $0<\lambda<\lambda'<\lambda''<\sigma$. It suffices to \hl{verify \eqref{eq:bm_target} with $\sigma$ replaced by $\lambda$ for} the typical $g\in\lip_{1}([0,1]^{d})$. We describe a winning strategy for Player II in the relevant Banach-Mazur game
	\hl{\begin{equation*}
	\BM\left(\set{g\in\lip_{1}([0,1]^{d})\colon \limsup_{\varepsilon\to 0}\Upsilon(h+g,z,\varepsilon)\geq\lambda},\lip_{1}([0,1]^{d})\right),
	\end{equation*}
defined before the present lemma and in \cite[Section~8.H]{kechris2012classical}. To complete the proof, it then only remains to apply \cite[Thm~8.33]{kechris2012classical}.}	
	
In response to the \hl{non-empty, open subset $U_{n}$ of $\lip_{1}([0,1]^{d})$ chosen as the $n$-th move} of Player I, Player II chooses a smooth function $g_{n}\in U_{n}$ and $\delta_{n}\in(0,2^{-n})$ so that $B(g_{n},\delta_{n})\subseteq U_{n}$. Next, Player II chooses for each point $z\in K$ a direction $e(z)\in S^{d-1}$, points $x(z),y(z)\in[0,1]^{d}$ and numbers $s(z),t(z)\in[-\delta_{n},\delta_{n}]\setminus\left\{0\right\}$ witnessing, \hl{according to \eqref{def:Upsilon} and \eqref{def:zeta}}, that
	\begin{equation}\label{eq:Ups_lambda''}
	\hl{\Upsilon}(\hl{h+g_{n}},\hl{z},\delta_{n})>\lambda''.
	\end{equation}
	\hl{The latter inequality~\eqref{eq:Ups_lambda''} holds for all $z\in K$ due to the smoothnees of $g_{n}$ and \eqref{eq:non_diff_h}. More precisely, the smoothness of $g_{n}$ in combination with Proposition~\ref{prop:zeta_charact_diff}~\eqref{zeta_charact_diff} and \eqref{ups_non_diff} implies that $\limsup_{\varepsilon\to 0}\Upsilon(g_{n},z,\varepsilon)=0$ for all $z\in (0,1)^{d}$. Putting this together with \eqref{eq:non_diff_h} and Proposition~\ref{prop:zeta_charact_diff}~\eqref{sum_diff_sup_ineq}, we deduce that $\limsup_{\varepsilon\to 0}\Upsilon(h+g_{n},z,\varepsilon)\geq \sigma>\lambda''$ for all $z\in K$. Finally, we apply Proposition~\ref{prop:zeta_charact_diff}~\eqref{mono}, to obtain \eqref{eq:Ups_lambda''}.} 
	
	Given $z'\in B(z,\eta)$ we have for $w(z'):=z'-z$ that $z'\in[x+w,x+w+te]\cap[y+w,y+w+se]$ and 
	\begin{multline*}
	\left|\frac{(h+g_{n})(x+w+te)-(h+g_{n})(x+w)}{t}-\frac{(h+g_{n})(y+w+se)-(h+g_{n})(y+w)}{s}\right|\\ \geq \lambda''-\frac{4(\lip(h)+1)\eta}{\min\left\{s,t\right\}}.
	\end{multline*}
	Let now $\eta(z)$ be sufficiently small so that 
	\begin{multline}
	\left|\frac{(h+g_{n})(x+w+te)-(h+g_{n})(x+w)}{t}-\frac{(h+g_{n})(y+w+se)-(h+g_{n})(y+w)}{s}\right|\\
	>\lambda' \label{eq:ball_ineq}
	\end{multline}
	for all points $z'\in B(z,\eta(z))$. The collection $(B(z,\eta(z)))_{z\in K}$ is an open cover of the compact set $K$. Player II extracts a finite subcover $(B(z_{i},\eta(z_{i})))_{i=1}^{N}$ and returns the open set $V_{n}:=B(g_{n},\theta_{n})$ for $\theta_{n}$ chosen sufficiently small based on the data corresponding to the points $z_{1},\ldots,z_{N}$ and, in particular, small enough so that $V_{n}\subseteq U_{n}$. The precise remaining condition on $\theta_{n}$ that we require will be determined later in the proof.
	
	Let us now verify that Player II wins the Banach Mazur game following the above strategy. Let $g\in \bigcap_{n=1}^{\infty}V_{n}$ and $z\in K$. We need to prove $\limsup_{\varepsilon\to 0}\Upsilon(h+g,z,\varepsilon)\geq \lambda$. Fixing $\varepsilon>0$ we verify that $\Upsilon(h+g,z,\varepsilon)\geq \lambda$. Let $n\in\N$ be large enough so that $\delta_{n}<\varepsilon$ and let $z_{i}$ be one of the points corresponding to the ball $B(z_{i},\eta(z_{i}))$ chosen by Player~II in the $n$-th round of the Banach-Mazur game such that $z\in B(z_{i},\eta(z_{i}))$. Then for $w:=z-z_{i}$ and $(x,y,s,t)=(x(z_{i}),y(z_{i}),s(z_{i}),t(z_{i}))$ we have that 
	\begin{equation*}
	 z\in[x+w,x+w+te]\cap [y+w,y+w+se],,\qquad s,t\in[-\delta_{n},\delta_{n}]\setminus\left\{0\right\}\subseteq[-\varepsilon,\varepsilon]\setminus\set{0}
	\end{equation*}
	and that $\eqref{eq:ball_ineq}$ holds. Since $g\in B(g_{n},\theta_{n})$, the same inequality holds with \hl{$g_{n}$} replaced by \hl{$g$} and $\lambda'$ replaced by $\lambda'-\frac{4\theta_{n}}{\min\left\{s,t\right\}}$. Thus, we obtain $\zeta(h+g,z,\varepsilon,\hl{e})\geq \lambda$ with the condition 
	\begin{equation*}
	\theta_{n}\leq \frac{(\lambda'-\lambda)\min_{1\leq i\leq N}\left\{s(z_{i}),t(z_{i})\right\}}{4}
	\end{equation*}
	imposed on \hl{$\theta_{n}$}. Here the \hl{minimum} is taken over all points $z_{1},\ldots,z_{N}\in K$ chosen by Player~II in the $n$-th round of the game.
	\end{proof}

\section{Construction of a Universal Differentiability Set.}
	We present a construction of a universal differentiability set $E\subseteq \R^{2}$ and a Lipschitz function $h$ having very few differentiability points in $E$. This will serve both the proof of Theorem~\ref{thm:main_result} and the proof of Theorem~\ref{thm:pu_tds}. The construction is \hl{primarily} based on that of \cite{CPT_2005}, but contains a few new modifications. Crucially for the proof of Theorem~\ref{thm:pu_tds}, we modify the construction in order to distinguish points of the set $E$ where $h$  is differentiable.
	\subsection{The Set $E$.}\label{section:set}
	Let $E\subseteq\R^{2}$ be a set of the form
	\begin{equation}\label{eq:set_E}
	E=\bigcap_{n=1}^{\infty}\bigcup_{k=n}^{\infty}B(L_{k},\rho_{k}),
	\end{equation}
	where $(L_{k})_{k=1}^{\infty}$ is a sequence of lines $L_{k}\subseteq \R^{2}$ and $(\rho_{k})_{k=1}^{\infty}$ is a sequence of positive numbers $\rho_{k}$ which converges to zero sufficiently \hl{rapidly}, in particular so that
	\hl{\begin{equation*}
	\sum_{k=1}^{\infty}\rho_{k}<\infty.
	\end{equation*}}\hl{Precisely six} further conditions will be imposed on these sequences in the course of the proof. \hl{To help the reader keep track of all of these conditions and verify their compatibility, we will use the labels \eqref{E1}, \eqref{E2}, \eqref{E3}, \eqref{E4}, \eqref{E5}, \eqref{E6} to mark each condition. We emphasise that in theory it is possible to state all of these conditions here immediately. However, by imposing them only at the moment that they are needed we hope to somewhat disentangle the proof and expose more clearly the purpose of each condition. In line with this convention, the statements of all lemmas which follow should be interpreted as being valid subject to additional conditions which may be imposed on the parameters of the construction in their proofs.}
	
	We define a sequence of functions $(k_{p})_{p=0}^{\infty}$ on $\R^{2}$ whose purpose is to record for each point $z\in\R^{2}$ the possibly empty subsequence of $\hl{i}\in\N$ for which $z\in B(L_{\hl{i}},\rho_{\hl{i}})$. Setting $k_{0}=0$ on the whole plane $\R^{2}$ we define $k_{p}$ inductively by
	\begin{equation}\label{eq:def_kp}
	k_{p}(z)=\inf\left\{\hl{i}>k_{p-1}(z)\colon z\in B(L_{\hl{i}},\rho_{\hl{i}})\right\},
	\end{equation}
where we interpret the infimum of the empty set as $\infty$. 
\hl{\begin{enumerate}
	\item[(\mylabel{E1}{E1})] We impose an additional constraint on the set $E$, namely, that the directions $e_{k}$ of each line $L_{k}$ lie in a cone around a fixed vector $w\in S^{1}$. For a parameter $\eta\in(0,1]$ we demand that
	\begin{equation}\label{eq:line_dir_cone}
	e_{k}\in C(w,\eta) \qquad\text{for all $k\in\N$.}
	\end{equation}
	The parameter $\eta\in(0,1)$ should be assumed to be small; in what follows we will occasionally require that it is smaller than some absolute constant whose value is not important. Eventually, for the proof of Theorem~\ref{thm:main_result}, the sufficiently small condition on $\eta$ will be determined by $\alpha$.
\end{enumerate}}
For each line $L_{k}$ we fix a point $x_{k}\in L_{k}$ so that $L_{k}=x_{k}+\R e_{k}$. We can now formulate a sufficient condition for $E$ to be a universal differentiability set.
\begin{lemma}\label{prop:suff_uds}
	\hl{Suppose that the sequence of lines $(L_{k}=x_{k}+\R e_{k})_{k=1}^{\infty}$ is such that the sequence of pairs $((x_{k},e_{k}))_{k=1}^{\infty}$ is dense in $\R^{2}\times C(w,\eta)$. Then,
		\begin{enumerate}[(i)]
			\item\label{l:E_UDS_i} the set $E$ is a universal differentiability set.
			\item\label{l:E_UDS_ii} there exists a (possibly different) sequence of lines $(\widetilde{L}_{k}=\widetilde{x}_{k}+\R\widetilde{e}_{k})_{k=1}^{\infty}$ for which the sequence of pairs $((\widetilde{x}_{k},\widetilde{e}_{k}))_{k=1}^{\infty}$ is dense in $\R^{2}\times C(w,\eta)$ and $\widetilde{L}_{k}\subseteq E$ for all $k\in\N$.
		\end{enumerate}
	}
\end{lemma}
\begin{proof}
	For \hl{\eqref{l:E_UDS_i}} see \cite[Example~4.4]{maleva_preiss2018}. \hl{\eqref{l:E_UDS_ii}} is proved by a Baire Category argument given in \cite[p.~362]{CPT_2005}.
\end{proof}
\hl{\begin{enumerate}
	\item[(\mylabel{E2}{E2})] We demand that the sequence $(L_{k})_{k=1}^{\infty}$ of lines satisfies the condition of Lemma~\ref{prop:suff_uds}, so that $E$ is a universal differentiability set.
\end{enumerate}} 
The next lemma represents a key step in the proof of Theorem~\ref{thm:pu_tds}. It is not needed for the proof of Theorem~\ref{thm:main_result}.
\begin{lemma}\label{lemma:compact_uds}
	There is a compact universal differentiability set $\hl{Y}\subseteq E\cap[0,1]^{2}$. 
\end{lemma}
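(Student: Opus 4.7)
The plan is to build $D$ via a construction of Dor\'e and Maleva \cite{Dore_Maleva2}, applied to a countable family of line segments that already lie inside $E$, while shrinking the relevant radii fast enough that the resulting compact set remains inside $E$.

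First, by the moreover part of Lemma~\ref{prop:suff_uds}, $E$ contains a sequence $(\widetilde L_k)_{k=1}^\infty$ of lines whose base-point/direction pairs $(\widetilde x_k,\widetilde e_k)$ are dense in $\R^2\times C(w,\eta)$. Since density allows us to choose base points $\widetilde x_k\in[0,1]^2$, we may assume each $\widetilde L_k$ meets $[0,1]^2$; set $\widetilde S_k:=\widetilde L_k\cap[0,1]^2$, a compact segment contained in $E\cap[0,1]^2$. Following Dor\'e-Maleva, fix integers $m_1<m_2<\cdots$ and radii $\varepsilon_1>\varepsilon_2>\cdots\to 0$ (with constraints to be imposed below) and define
\[
D_n:=\overline{B\!\left(\bigcup_{k=1}^{m_n}\widetilde S_k,\,\varepsilon_n\right)}\cap[0,1]^2,\qquad D:=\bigcap_{n=1}^\infty D_n.
\]
Each $D_n$ is compact and $D_{n+1}\subseteq D_n$ for suitable choices, so $D$ is compact. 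A standard shrinkage condition on $(\varepsilon_n)$ of Dor\'e-Maleva type, combined with the density of the pairs $(\widetilde x_k,\widetilde e_k)$, ensures $D$ is a universal differentiability set: through every point of $D$ and in every direction from $C(w,\eta)$ one finds, at arbitrarily small scales, line segments staying inside $D$ up to controlled error, which suffices to run Preiss's almost-maximal-directional-derivative argument on any Lipschitz $f\colon\R^2\to\R$ and locate a differentiability point inside $D$.

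It remains to arrange $D\subseteq E$. Set $U_n:=\bigcup_{k\geq n}B(L_k,\rho_k)$, an open set containing $E$, and hence containing every $\widetilde L_k$ and every $\widetilde S_k$. The finite union $\bigcup_{k=1}^{m_n}\widetilde S_k$ is a compact subset of the open set $U_n$, so there exists $\delta_n>0$ with $B\bigl(\bigcup_{k=1}^{m_n}\widetilde S_k,\delta_n\bigr)\subseteq U_n$. Imposing the extra constraint $\varepsilon_n\leq\delta_n$ alongside the Dor\'e-Maleva bound gives $D_n\subseteq U_n$ for every $n$, whence
\[
D=\bigcap_{n=1}^\infty D_n\subseteq\bigcap_{n=1}^\infty U_n=E.
\]

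The main obstacle is to invoke the Dor\'e-Maleva construction in the right form, noting that their argument is compatible with directions drawn from a cone rather than from the full sphere (consistent with \cite[Example~4.4]{maleva_preiss2018}). Beyond that, combining the two upper bounds on $\varepsilon_n$ (the Dor\'e-Maleva shrinkage condition and $\varepsilon_n\leq\delta_n$) is harmless, since both are just upper bounds and can be satisfied simultaneously by passing to the minimum. This yields a compact universal differentiability set $D\subseteq E\cap[0,1]^2$, as required.
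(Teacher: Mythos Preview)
Your approach is the same as the paper's: invoke the Dor\'e--Maleva construction on segments of the lines $\widetilde L_k\subseteq E$ supplied by Lemma~\ref{prop:suff_uds}, and shrink radii so that the resulting compact set stays inside the $G_\delta$ set $E$. Your compact-in-open argument for $D\subseteq E$ is correct and matches what the paper does (it simply notes that the Dor\'e--Maleva scheme already allows one to fit the construction inside a prescribed $G_\delta$ superset).

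There are, however, two genuine gaps. First, your sketch of the Dor\'e--Maleva construction as a single nested intersection $D=\bigcap_n D_n$ of closed neighbourhoods of growing finite unions of segments is too crude: the actual construction produces a nested \emph{family} $(T_\lambda)_{\lambda\in(0,1]}$ via sets of the form $M_k(\lambda)=\bigcup_{k\le n\le(1+\lambda)k}\overline B_{\lambda w_n}(R_n)$, and this $\lambda$-parameter is essential to the almost-locally-maximal argument (one needs, at each stage, slightly larger sets $T_\alpha$ with $\alpha>\lambda_1$ in which to locate competitors). A plain intersection as you wrote does not visibly carry the wedge approximation property. Second, and more importantly, you name but do not resolve the ``main obstacle'': because all segments have directions in $C(w,\eta)$, the sets $T_\lambda$ enjoy only a \emph{restricted} wedge approximation property, for wedges whose two arms are parallel to directions in $C(w,\eta)$. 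The paper devotes the bulk of its proof to checking that this restriction is harmless: one starts with $e_0\in C(w,\eta)$, chooses the increments $\lVert e_{n+1}-e_n\rVert\le\sigma_n$ small enough that the limit direction $e$ remains in the cone, and then observes that the wedges $[x-se,x+z]\cup[x+z,x+se]$ arising in \cite[Lemma~4.2]{Dore_Maleva1} have arms with directions arbitrarily close to $e$, hence still in $C(w,\eta)$. Your citation of \cite[Example~4.4]{maleva_preiss2018} does not substitute for this, since that example treats $G_\delta$ sets rather than the compact Dor\'e--Maleva sets.
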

\begin{remark}
	In \cite{Dore_Maleva2}, Dor\'e and Maleva give a construction of a compact universal differentiability set inside a given $G_{\delta}$ set containing a sequence of lines dense in $\R^{d}\times S^{d-1}$ in the sense of Lemma~\ref{prop:suff_uds}. The proof of Lemma~\ref{lemma:compact_uds}, where inside the $G_{\delta}$ set $E$ we only have density of lines in $\R^{2}\times C(w,\eta)$, requires several simple modifications to this construction and to arguments presented in the preceding paper \cite{Dore_Maleva1} of the same authors. These arguments have also been employed in subsequent works \cite{Dore_Maleva3} and \cite{dymond_maleva2016}. Since the full details of the modification would be rather lengthy, we present below a sketch of the proof of Lemma~\ref{lemma:compact_uds} which refers to the relevant literature and describes the necessary modifications.  
\end{remark}
\begin{proof}[Proof of Lemma~\ref{lemma:compact_uds}]
	The $G_{\delta}$ set $E$ contains a sequence of lines $(\widetilde{L}_{k}=\widetilde{x}_{k}+\R\widetilde{e}_{k})_{k=1}^{\infty}$ which is dense in $\R^{2}\times C(w,\eta)$ in the sense of Lemma~\ref{prop:suff_uds}. We follow the construction of \cite{Dore_Maleva2} to produce a family of compact sets inside of $E$. The construction provides families of sets of the form
	\begin{equation*}
	M_{k}(\lambda)=\bigcup_{k\leq n\leq (1+\lambda)k}\overline{B}_{\lambda w_{n}}(R_{n})\subseteq[0,1]^{2},\qquad \lambda\in (0,1],
	\end{equation*}
	where the sets $R_{n}$ are increasing, finite unions of line segments \hl{and the numbers $w_{n}>0$ are chosen sufficiently small}. In our modified construction the line segments of $R_{n}$ will always be chosen inside the lines $\widetilde{L_{k}}\subseteq E$. The universal differentiability sets produced by \cite{Dore_Maleva2} take the form
	\begin{equation*}
	T_{\lambda}=\bigcap_{k=1}^{\infty}M_{k}(\lambda)\subseteq [0,1]^{2},\qquad \lambda\in (0,1],
	\end{equation*}
	and the construction ensures that each set $T_{\lambda}$ fits inside a $G_{\delta}$ set fixed at the start containing all lines added to the sets $R_{n}$. We take $E$ as this $G_{\delta}$ set and so we obtain compact sets $T_{\lambda}\subseteq E$. Further note that the sets $(T_{\lambda})_{\lambda\in (0,1]}$ are nested in the sense that $T_{\lambda_{1}}\subseteq T_{\lambda_{2}}$ whenever $\lambda_{1}\leq \lambda_{2}$. 
		
	To establish that each of the sets $T_{\lambda}$ is a universal differentiability set, the paper \cite{Dore_Maleva2} proves that the family $(T_{\lambda})_{\lambda\in(0,1]}$ posseses the `wedge approximation property' described in \cite[Lemma~3.5]{Dore_Maleva3} and \cite[Lemma~3.1]{dymond_maleva2016}. In our modified construction, we only add line segments to the sets $R_{n}$ with directions inside the cone $C(w,\eta)$. Accordingly, we obtain sets $(T_{\lambda})_{\lambda\in (0,1]}$ with a weaker form of the wedge approximation property. Namely, the identical approximation property restricted only to wedges $[x,y]\cup[y,z]$ in which the two line segments $[x,y]$ and $[y,z]$ are both parallel to some direction in the cone $C(w,\eta/2)$.  We write $\eta/2$ instead of $\eta$ here to avoid problems with directions on the boundary.
	
	It now remains to argue that this restricted wedge approximation property is sufficient for universal differentiability. Given a Lipschitz function $f_{0}\colon\R^{2}\to\R$ we follow the proof of \cite[Theorem~3.1]{Dore_Maleva1} in order to find a point of differentiability of $f_{0}$ inside say $T_{1}$. To begin, we fix some $\lambda_{0}<\lambda_{1}\in (0,1)$ and find a pair $(x_{0},e_{0})$ with $x_{0}\in T_{\lambda_{0}}$ and $e_{0}\in S^{1}$ such that the directional derivative $f'(x_{0},e_{0})$ exists. Since the set $T_{\lambda_{0}}$ contains line segments in $R_{1}$, parallel to some direction in the cone $C(w,\eta/2)$ we may additionally prescribe here that the direction $e_{0}$ is taken inside $C(w,\eta/2)$. Given this starting data, the proof of \cite[Theorem~3.1]{Dore_Maleva1} constructs a Lipschitz function $f\colon\R^{2}\to\R$ which differs from $f_{0}$ only by a linear function and a sequence of point-direction pairs $(x_{n},e_{n})\in T_{\lambda_{1}}\times S^{1}$ converging to a pair $(x,e)\in T_{\lambda_{1}}\times S^{1}$ such that the directional derivative $f'(x,e)$ exists and satisfies a very delicate `almost locally maximal' condition defined in the statement of \cite[Theorem~3.1]{Dore_Maleva1}. In the iterative construction of the sequence $(x_{n},e_{n})$ the new direction $e_{n+1}$ may always be chosen arbitrarily close to the previous one $e_{n}$; in this proof the inequality $\norm{e_{n+1}-e_{n}}\leq \sigma_{n}$ is satisfied at each step where $\sigma_{n}$ may always be taken arbitrarily small. Hence, we may ensure that the limit direction $e$ lies inside $C(w,\eta/2)$. 
	
	Finally, having arrived at a pair $(x,e)\in T_{\lambda_{1}}\times C(w,\eta/2)$ for which the directional derivative $f'(x,e)$ is almost locally maximal, we argue that $f$ and therefore also $f_{0}$ is differentiable at $x$. In what follows the point $z\in\R^{2}$ is denoted by $\lambda$ in the referred literature. We change the notation in this instance in order to avoid confusion with the index $\lambda$ of the sets $T_{\lambda}$, but otherwise we use the same notation as the referred literature. If $f$ is not differentiable at $x$ then we follow the argument of \cite[Lemma~4.3]{Dore_Maleva1} and use \cite[Lemma~4.2]{Dore_Maleva1} to show that on arbitrarily small wedges of the form
	\begin{equation}\label{eq:wedge}
	[x-se,x+z]\cup[x+z,x+se]\subseteq \R^{2},
	\end{equation}
	and on all sufficiently good approximations of such wedges we may find points $x'$ admitting a direction $e'$ for which the directional derivative $f'(x',e')$ exists and is greater, in a technical sense, than $f'(x,e)$. If such wedges can be found inside the sets $T_{\alpha}$ with $\alpha$ greater than but arbitrarily close to $\lambda_{1}$ then we obtain a contradiction to the almost locally maximal condition on $f'(x,e)$, which completes the proof. In \cite{Dore_Maleva2} this is ensured by the wedge approximation property of the sets $(T_{\lambda})_{\lambda\in(0,1]}$. The point $x+z$ appearing in \eqref{eq:wedge} may be taken arbitrarily close to the line segment $[x-se,x+se]$ relative to the scale $s>0$; see \cite[(4.4), Lemma~4.2]{Dore_Maleva1}. Therefore, the directions of the two segments $[x-se,x+z]$ and $[x+z,x+se]$ may be taken arbitrarily close to $e\in C(w,\eta/2)$. In particular, it suffices to consider only wedges in which the two component line segments are parallel to directions in $C(w,\eta/2)$. This means that the restricted wedge approximation property present in our sets $(T_{\lambda})_{\lambda\in (0,1]}$ is enough.	
\end{proof}
\begin{remark}
	The argument used in the proof of Lemma~\ref{lemma:compact_uds} also shows that there exist compact universal differentiability sets of arbitrarily small cone width in the sense of \cite[Definition~1.1]{maleva_preiss2018}. 
\end{remark}
	\subsection{$\C^{1}$ curves meeting $E$.}
	Recall that our ultimate goal is to construct a function $h\colon \R^{2}\to \R$ whose set of differentiability points inside of $E$ intersect every $\C^{1}$ curve in a set of measure zero. The objective of the present section is to investigate how $C^{1}$ curves intersect the whole set $E$. The results that follow depend entirely on the geometry of the set $E$ and in particular rely on the thinness of the strips $B(L_{k},\rho_{k})$. They have nothing to do with the function with a small set of differentiability points that we will construct later on. 
	\begin{lemma}\label{lemma:easy_curves}
	For every $\C^{1}$ curve $\gamma\colon I\to\R^{2}$ satisfying
	\begin{equation*}
	\gamma'(t)\notin \widehat{C}(w,\eta) \qquad \text{for all $t\in I$,}
	\end{equation*}
	it holds that $\leb\left(\gamma^{-1}(E)\right)=0$.
	\end{lemma}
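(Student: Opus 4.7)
The plan is to show that each strip $B(L_k,\rho_k)$ is crossed by $\gamma$ transversally at a uniform rate, so that $\gamma^{-1}(B(L_k,\rho_k))$ has Lebesgue measure at most $C\rho_k$ with a constant $C$ independent of $k$. Summability of the $\rho_k$ and the Borel--Cantelli lemma (applied to $\gamma^{-1}(E)=\limsup_k \gamma^{-1}(B(L_k,\rho_k))$) then deliver the conclusion.

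First I would use compactness. The image $\gamma'(I)$ is a compact subset of the open set $S^{1}\setminus \widehat{C}(w,\eta)$, so there exists $\delta>0$ with $|\langle \gamma'(t),w\rangle|\leq 1-\eta-\delta$ for every $t\in I$. Writing directions in polar form $v=\cos\theta\, w+\sin\theta\, w^{\perp}$, this says the angle $\theta(t)$ of $\gamma'(t)$ lies in a closed arc bounded away from both $\pm\arccos(1-\eta)$, while any line direction $e_k\in C(w,\eta)$ has angle $\beta_k$ with $|\beta_k|\leq \arccos(1-\eta)$. A short trigonometric computation shows $|\langle \gamma'(t),e_k\rangle|=|\cos(\theta(t)-\beta_k)|\leq c$ for some constant $c<1$ depending only on $\eta$ and $\delta$. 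Consequently, denoting by $e_k^{\perp}$ a unit vector perpendicular to $e_k$, there is $s:=\sqrt{1-c^{2}}>0$ with
\begin{equation*}
|\langle \gamma'(t),e_k^{\perp}\rangle|\geq s \quad\text{for every }t\in I\text{ and every }k\in\N.
\end{equation*}

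Next, fix $k$ and consider the scalar $C^{0}$ function $\varphi_k(t):=\langle \gamma(t)-x_k,e_k^{\perp}\rangle$, whose derivative $\varphi_k'(t)=\langle \gamma'(t),e_k^{\perp}\rangle$ satisfies $|\varphi_k'|\geq s$ on $I$. Since $\varphi_k'$ is continuous on the connected set $I$ and never vanishes, it has constant sign, so $\varphi_k$ is strictly monotone and its inverse is Lipschitz with constant $1/s$. Because $\gamma(t)\in B(L_k,\rho_k)$ is equivalent to $|\varphi_k(t)|\leq \rho_k$, we deduce
\begin{equation*}
\leb\bigl(\gamma^{-1}(B(L_k,\rho_k))\bigr)\leq \frac{2\rho_k}{s}.
\end{equation*}

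Finally, summing over $k$ gives $\sum_{k}\leb(\gamma^{-1}(B(L_k,\rho_k)))\leq (2/s)\sum_{k}\rho_k<\infty$. Since $\gamma^{-1}(E)=\bigcap_{n}\bigcup_{k\geq n}\gamma^{-1}(B(L_k,\rho_k))=\limsup_{k}\gamma^{-1}(B(L_k,\rho_k))$, the Borel--Cantelli lemma yields $\leb(\gamma^{-1}(E))=0$. The only point requiring care is the uniformity step: making sure that the lower bound $s$ for the transverse component can be chosen independent of $k$. This is handled precisely by passing from the open condition $\gamma'(t)\notin\widehat{C}(w,\eta)$ to the compact strengthening $|\langle\gamma'(t),w\rangle|\leq 1-\eta-\delta$ above, which is legitimate because $I$ is compact and $\gamma'$ continuous.
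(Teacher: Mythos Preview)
Your proof is correct and follows essentially the same approach as the paper: a uniform transversality estimate $|\langle\gamma'(t),e_k^{\perp}\rangle|\geq s>0$ obtained by compactness, yielding $\leb(\gamma^{-1}(B(L_k,\rho_k)))\leq 2\rho_k/s$, followed by summability of the $\rho_k$. The only cosmetic differences are that the paper packages the crossing estimate into a separate lemma (Lemma~\ref{lemma:crossing}) and obtains the uniform bound by maximising $|\langle\gamma'(t),e\rangle|$ directly over the compact set $I\times C(w,\eta)$, rather than via your two-step route through $|\langle\gamma'(t),w\rangle|\leq 1-\eta-\delta$ and a trigonometric argument.
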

	\begin{proof}
		The function
		\begin{equation*}
		I\times C(w,\eta)\to \R,\qquad (t,e)\mapsto \abs{\langle{\gamma'(t),e}\rangle},
		\end{equation*}
		is continuous and defined on a compact set. Therefore, it attains its maximum, which must be greater than zero, at some pair $(t_{0},e_{0})\in I\times C(w,\eta)$. Since $\gamma'(t_{0})\notin \widehat{C}(w,\eta)$ and $e_{0}\in C(w,\eta)$ we have 
		\begin{equation*}
		0<\abs{\langle{\gamma'(t_{0}),e_{0}}\rangle} <1.
		\end{equation*}
		Setting $\delta_{0}:=1-\hl{\sqrt{1-\abs{\langle{\gamma'(t_{0}),e_{0}}\rangle}^{2}}}$, we deduce, \hl{using the maximality of $\abs{\langle{\gamma'(t_{0}),e_{0}}\rangle}$}, that
		\begin{equation*}
		\gamma'(t)\in \widehat{C}(e^{\perp},\delta_{0})\qquad \text{for all $t\in I$,}
		\end{equation*}
		for all $e\in C(w,\eta)$ and in particular for all $e=e_{k}$, $k\in\N$. \hl{Recalling that $B(L_{k},\rho_{k})$ is a strip of width $2\rho_{k}$ parallel to $e_{k}$,} elementary geometric reasoning leads to 
		\begin{align*}
		\gamma^{-1}(B(L_{k},\rho_{k}))&\leq \frac{2\rho_{k}}{1-\delta_{0}},
		\end{align*}
		\hl{for each $k\in\N$.} More precisely, we obtain the above inequality by applying Lemma~\ref{lemma:crossing} of Appendix~\ref{app_geom_curves} with $W=B(L_{k},\rho_{k})$, $v=e_{k}^{\perp}$ and $\delta=\delta_{0}$. Since, for arbitrary $N\in\N$ the set $\bigcup_{k=N}^{\infty}\overline{B}(L_{k},\rho_{k})$ covers $E$ \hl{(see \eqref{eq:set_E})}, we have
		\begin{equation*}
		\leb(\gamma^{-1}(E))\leq \frac{2}{1-\delta_{0}}\sum_{k=N}^{\infty}\rho_{k}\qquad \text{for all $N\in\N$,}
		\end{equation*}
		and hence $\leb(\gamma^{-1}(E))=0$.
	\end{proof}
	The remaining results of the present section share a common hypothesis. \hl{Before stating it, we will try to provide some intuition. For the Lipschitz function $h\colon\R^{2}\to\R$ that we construct later, we will need to show that $\Diff(h)\cap E$ meets every $\C^{1}$ curve in a set of Lebesgue measure zero. Since every $\C^{1}$ curve may be partitioned into shorter $\C^{1}$ curves, whose derivatives are almost constant, it suffices to consider only curves whose derivative stays inside a cone of arbitrarily thin width. If such a cone is taken away from $w$ or $-w$ then the situation is easy: Lemma~\ref{lemma:easy_curves} establishes that the entire set $E$ is invisible to curves corresponding to such a cone. The following hypothesis considers the problematic case of curves which are almost parallel to $w$, that is, those curves whose derivatives stay inside a thin cone with centre $w$. For such curves we require some additional work to show that they intersect $\Diff(h)\cap E$ in a set of Lebesgue measure zero. To achieve this we will approximate their derivatives by simpler mappings, denoted by $\beta_{p}$ in Hypothesis~\ref{hyp} below.}
	\begin{hypothesis}\label{hyp}
		Let $\delta\in(0,1)$ be sufficiently small, \hl{that is, smaller than some positive, absolute constant whose value is not important}\footnote{\hl{The precise `sufficiently small condition' on $\delta$ is determined by \eqref{eq:suff_small_delta} inside the proof of Lemma~\ref{lemma:Dp_integral}.}}. Let $\gamma\colon I\to\R^{2}$ be a $\C^{1}$ curve and suppose that
		\begin{equation*}
		\gamma'(t)\in C(w,\delta) \qquad\text{for all $t\in I$.}
		\end{equation*}  
		For each $p\geq 0$ let $\Sigma_{p}$ be the smallest $\sigma$-algebra on $I$ with respect to which the functions 
		\begin{equation*}
		k_{q}\circ \gamma, \qquad q=0,1,2,\ldots,p
		\end{equation*}
		are measurable. \hl{(See \eqref{eq:def_kp} for the definition of the functions $k_{q}$).} Furthermore, we define for each $p\geq 0$ a mapping $\beta_{p}\colon I\to\R^{2}$ by 
		$\beta_{p}=\Exp[\gamma'|\Sigma_{p}]$ and consider the corresponding sets
		\begin{align}\label{eq:Dp}
		D_{p}:=\left\{t\in I\colon k_{p}(\gamma(t))<\infty,\, \abs{\skp{\beta_{p}(t),e_{k_{p}(\gamma(t))}^{\perp}}}>2^{-p}\right\},\quad
		D:=\bigcap_{n=1}^{\infty}\bigcup_{p=n}^{\infty}D_{p}.	
		\end{align}
	\end{hypothesis}

	\begin{lemma}[Under Hypothesis~\ref{hyp}]\label{prop:Dp}
The set $D\subseteq I$ has Lebesgue measure zero.
	\end{lemma}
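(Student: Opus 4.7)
The plan is to deduce $|D|=0$ via a Borel--Cantelli argument, bounding $\sum_p|D_p|<\infty$ by imposing an additional rapid-decay condition on the sequence $(\rho_k)$ as part of the construction. The argument has a martingale flavour: the filtration $(\Sigma_p)_{p\geq 0}$ is increasing, and since each $\Sigma_p$ is generated by finitely many $\mathbb{N}\cup\{\infty\}$-valued functions $k_0\circ\gamma,\ldots,k_p\circ\gamma$, it is purely atomic. On every atom $A$ of $\Sigma_p$ with $k_p|_A=k<\infty$ the Radon--Nikodym characterisation of conditional expectation gives
\begin{equation*}
\beta_p|_A=\frac{1}{|A|}\int_A\gamma'(s)\,ds,
\end{equation*}
and $\gamma(s)\in B(L_k,\rho_k)$ for every $s\in A$; in particular $\beta_p(t)\cdot e_{k_p(\gamma(t))}^\perp$ is itself $\Sigma_p$-measurable.

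The core step is a geometric estimate for $|\int_A\gamma'\cdot e_k^\perp\,ds|$ on such an atom, proved by telescoping. Writing $A=\bigsqcup_{i=1}^{n_A}[a_i,b_i]$ as a union of maximal subintervals, the integral equals $\sum_i(\gamma(b_i)-\gamma(a_i))\cdot e_k^\perp$ and rearranges to $(\gamma(b_{n_A})-\gamma(a_1))\cdot e_k^\perp-\sum_{i<n_A}(\gamma(a_{i+1})-\gamma(b_i))\cdot e_k^\perp$. Both $\gamma(a_1),\gamma(b_{n_A})\in B(L_k,\rho_k)$, so the first term has modulus at most $2\rho_k$. For each gap-term, the small-$\delta$ hypothesis is decisive: $\gamma$ is nearly monotone in direction $w$ and the strip $B(L_k,\rho_k)$ is nearly parallel to $w$, hence $\gamma$ cannot wrap around the strip to its opposite side. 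Thus whenever $\gamma$ leaves $B(L_k,\rho_k)$ during a gap it must re-enter on the same side and the corresponding gap-contribution vanishes. The remaining gaps correspond to $\gamma$ remaining inside $B(L_k,\rho_k)$ while some lower-indexed strip $B(L_j,\rho_j)$, $j<k$, is entered or exited; such gaps contribute at most $2\rho_k$ each and their count is controlled by the crossing data of strips of index $<k$.

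Combining this with Markov's inequality and using that every atom with $k_p|_A=k$ forces $k\geq p$ (since $k_1<k_2<\cdots<k_p$ is a strictly increasing sequence of positive integers, so only $k\geq p$ are attainable), one obtains
\begin{equation*}
|D_p|\leq 2^p\int_{D_p}\bigl|\beta_p\cdot e_{k_p(\gamma(\cdot))}^\perp\bigr|\,dt\leq 2^{p+1}\sum_{k\geq p}\rho_k\, C_k,
\end{equation*}
where $C_k$ depends only on the combinatorial geometry of the strips of index at most $k$. Imposing on the construction a rapid enough decay of $\rho_k$ so that $\sum_k 2^k\rho_k C_k<\infty$ (which is possible since $C_k$ grows only polynomially in $k$ under the small-$\delta$ hypothesis, while $\rho_k$ is at our disposal) yields $\sum_p|D_p|<\infty$, and a Borel--Cantelli application concludes that $|D|=0$.

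The main obstacle is the geometric telescoping together with the subsequent control of the number of ``in-strip history changes'' within each $B(L_k,\rho_k)$; both rely critically on the hypothesis that $\delta$ is sufficiently small, which simultaneously prevents $\gamma$ from wrapping around any single strip and keeps the combinatorial complexity of the atoms of $\Sigma_p$ within each strip tame enough to be absorbed by the decay of $\rho_k$.
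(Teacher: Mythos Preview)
Your overall architecture matches the paper's: Borel--Cantelli via summability of $\leb(D_p)$, obtained from Markov's inequality applied to $\int_{D_p}|\langle\beta_p,e_{k_p}^\perp\rangle|\,dt$, together with a geometric bound on this integral atom-by-atom. The gap lies in the geometric bound.

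Your telescoping gives $\abs{\int_A \langle\gamma',e_k^\perp\rangle}\leq 2\rho_k\cdot n_A$, where $n_A$ is the number of maximal subintervals of the atom $A$. You then assert that $\sum_A n_A$ (summed over atoms with $k_p=k$) is bounded by some $C_k$ depending only on the strip combinatorics and growing polynomially in $k$. This is false. The quantity $n_A$ counts how many times $\gamma$ crosses $\partial P$, and $\partial P$ consists of lines in directions $e_j\in C(w,\eta)$, hence nearly \emph{parallel} to $\gamma'\in C(w,\delta)$. A $\mathcal{C}^1$ curve close to direction $w$ can oscillate in the $w^\perp$ direction and cross such a line arbitrarily often: for instance, an arc-length reparametrisation of $t\mapsto(t,\varepsilon^2\sin(t/\varepsilon))$ satisfies $\gamma'\in C(e_1,\delta)$ once $\varepsilon^2<2\delta$, yet crosses the $x$-axis roughly $|I|/(\pi\varepsilon)$ times. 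So no bound on $C_k$ independent of $\gamma$ exists, and you cannot absorb it into the decay of $\rho_k$. (Your subsidiary claim that gaps during which $\gamma$ leaves $B(L_k,\rho_k)$ contribute zero is also not quite right: the endpoints $\gamma(b_i),\gamma(a_{i+1})$ lie on $\partial P$, which may be a piece of $\partial B(L_j,\rho_j)$ for $j<k$ rather than $\partial B(L_k,\rho_k)$; but this is secondary, since such gaps are still bounded by $2\rho_k$.)

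The paper circumvents this by exploiting the \emph{convexity} of each component $P$ of $B(L_k,\rho_k)\setminus\bigcup_{j<k}\partial B(L_j,\rho_j)$. Writing $\partial P$ as the union of graphs of a convex function $\psi^-$ and a concave function $\psi^+$ in coordinates $(e_k,e_k^\perp)$, the sum of gap terms is controlled not by the number of gaps but by the total variation of $\psi^\pm$, which is at most $2\diam_{e_k^\perp}(P)\leq 4\rho_k$. This yields $\abs{\int_{\gamma^{-1}(P)}\langle\gamma',e_k^\perp\rangle}\leq 12\rho_k$ per component, independent of $n_A$. The number of \emph{components} (not intervals) is at most $3^k$, giving the estimate $\int_{D_{p,k}}\abs{\langle\beta_p,e_k^\perp\rangle}\leq 12\cdot 3^k\rho_k$, and the condition $\sum_k 3^k\rho_k<\infty$ on the construction then finishes the argument exactly as you outlined.
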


The proof of Lemma~\ref{prop:Dp} is based on the following observation:
\begin{lemma}[Under Hypothesis~\ref{hyp}]\label{lemma:Dp_integral}
	Let $k,\hl{p}\in \N$ and $P$ be a \hl{connected} component of 
	\begin{equation*}
	B(L_{k},\rho_{k})\setminus \bigcup_{1\leq j< k}\partial B(L_{j},\rho_{j})
	\end{equation*}
	for which $k_{p}(z)=k$ for all $z\in P$. Then 
	\begin{equation*}
	\int_{\gamma^{-1}(P)}\left|\langle{\beta_{p}(t),e_{k}^{\perp}}\rangle\right|\,dt\leq 12\rho_{k}.
	\end{equation*}

\end{lemma}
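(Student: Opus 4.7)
My plan is to reduce the estimate to a geometric inequality about $\gamma$ and then establish it by a telescoping argument. For the reduction, I would observe that $\gamma^{-1}(P)$ lies inside a single atom of $\Sigma_p$. Indeed, $P$ is connected and meets none of the boundaries $\partial B(L_j,\rho_j)$ for $j<k$, so each of the sets $B(L_j,\rho_j)$ with $j\le k$ either contains $P$ entirely or misses it entirely; together with the hypothesis $k_p=k$ on $P$, this fixes the tuple $(k_0,\dots,k_p)$ on $\gamma^{-1}(P)$. Hence $\gamma^{-1}(P)\subseteq A$ for the corresponding atom $A$ of $\Sigma_p$, and by the definition of conditional expectation $\beta_p$ is constant on $A$ with value $v_A=|A|^{-1}\int_A\gamma'(s)\,ds$. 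Since $|\gamma^{-1}(P)|\le|A|$ this yields
\[
\int_{\gamma^{-1}(P)}|\langle\beta_p(t),e_k^\perp\rangle|\,dt = |\gamma^{-1}(P)|\,|\langle v_A,e_k^\perp\rangle| \le \Bigl|\int_A\langle\gamma'(s),e_k^\perp\rangle\,ds\Bigr|,
\]
so it suffices to bound the right-hand side by $12\rho_k$.

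For the geometric estimate I would reparametrise $\gamma$ by $r=\langle\gamma(\cdot),e_k\rangle$; the hypothesis $\gamma'\in C(w,\delta)$ together with $e_k\in C(w,\eta)$ forces $\langle\gamma',e_k\rangle>0$ provided $\delta$ is sufficiently small, so $r$ is a valid change of parameter. In the resulting $(e_k,e_k^\perp)$-frame the curve becomes the graph of a Lipschitz function $\phi$ of small slope, the strip $B(L_k,\rho_k)$ becomes a horizontal band of width $2\rho_k$, and each $B(L_j,\rho_j)$ with $j<k$ becomes a band about a line of small slope $\tan\alpha_j$, where $\alpha_j$ is the angle between $e_j$ and $e_k$. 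The change of variable then gives
\[
\int_A\langle\gamma'(s),e_k^\perp\rangle\,ds = \int_{r(A)}\phi'(r)\,dr = \sum_{\ell}\bigl(\phi(r_\ell^+)-\phi(r_\ell^-)\bigr),
\]
where $(r_\ell^-,r_\ell^+)$ enumerates the components of the open set $r(A)$; the confinement of $\gamma$ to $B(L_k,\rho_k)$ on $A$ bounds each summand by $2\rho_k$ in absolute value, but the summands may come with either sign.

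The crux of the argument, and what I expect to be the main technical obstacle, is to exhibit cancellation in this signed sum. Each endpoint $r_\ell^\pm$ satisfies $\gamma(t(r_\ell^\pm))\in\partial B(L_j,\rho_j)$ for some $j\le k$, so lies on one of the two boundary lines of that strip. My approach is to organise the endpoints by the specific boundary line on which they lie, exploit the alternating sequence of $A$-entries and $A$-exits along each individual line, and telescope the contributions pairwise. The two horizontal boundary lines of $\partial B(L_k,\rho_k)$ (case $j=k$) are the easiest: $\phi$ is constant along each, so entries and exits along a single such line produce exactly cancelling pairs and the uncancelled terms contribute at most $2\rho_k$ apiece. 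For the remaining near-horizontal lines (case $j<k$), the same pairing leaves a residue of order $\tan\alpha_j$ times the $r$-length of a corresponding ``gap'' in $r(A)$; the slopes $\tan\alpha_j$ are controlled by $\eta$, and the $r$-excursion of $\gamma$ across such a line is limited by the width of $B(L_k,\rho_k)$ in the $e_k^\perp$-direction. Summing the contributions across the finitely many boundary lines that actually meet $\partial P$ leads to the stated bound $12\rho_k$. The delicate point is that $\gamma$ may oscillate across each boundary line many times, and smallness of $\delta$ relative to $\eta$ is essential to guarantee that the telescoping pairing terminates with only finitely many uncancelled terms.
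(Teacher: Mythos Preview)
Your reduction in the first paragraph is correct and matches the paper's, though you miss that in fact $A=\gamma^{-1}(P)$ exactly: the component $P$ is precisely the level set $\bigcap_{q\le p}\{z:k_q(z)=k_q(z_0)\}$, so $\gamma^{-1}(P)$ is itself the $\Sigma_p$-atom. This makes the inequality in your displayed line an equality, and the quantity you are left to bound is exactly $\bigl|\int_{\gamma^{-1}(P)}\langle\gamma',e_k^{\perp}\rangle\,dt\bigr|$.

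The gap is in your geometric step. You overlook the decisive structural fact that $P$ is \emph{convex}: it is a component of an open strip minus finitely many lines, hence an intersection of half-planes with the strip. The paper exploits this directly via the convex-curve lemma (Lemma~\ref{lemma:convex_curve}): writing $\partial P$ as the union of a convex graph $\psi^-$ and a concave graph $\psi^+$ in the $(e_k,e_k^{\perp})$ frame, one has total variation of each at most $2\operatorname{diam}_{e_k^{\perp}}(P)\le 4\rho_k$, and the telescoping over consecutive boundary hits then costs at most $6\operatorname{diam}_{e_k^{\perp}}(P)\le 12\rho_k$, uniformly in the number of edges of $P$.

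Your proposed telescoping, by contrast, organises endpoints by the particular line of $\partial B(L_j,\rho_j)$ on which they lie and tries to pair entries with exits line-by-line. This does not give a bound independent of the number of boundary lines. Consecutive crossings of $\partial P$ by $\gamma$ need not lie on the same line, so your ``alternating'' pairing along each individual line is not well-defined without further argument; and even granting one uncancelled residue of size $O(\rho_k)$ per boundary line, the number of lines bounding $P$ can be of order $k$, yielding only $O(k\rho_k)$. The sentence ``summing the contributions across the finitely many boundary lines that actually meet $\partial P$ leads to the stated bound $12\rho_k$'' is exactly where the argument breaks. What rescues the situation is convexity: it forces the endpoints, read left to right, to lie on a single convex arc below and a single concave arc above, and it is the bounded variation of convex/concave functions---not any property of the individual lines---that delivers the uniform constant.
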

\begin{proof}
	
Note that $\langle{w,e_{k}}\rangle \geq 1-\eta\geq \frac{1}{\sqrt{2}}$, where the final inequality is a condition on $\eta$. Thus, for all $t\in I$ we have
	\begin{equation}\label{eq:suff_small_delta}
	\langle{\gamma'(t),e_{k}}\rangle \geq (1-\delta)\frac{1}{\sqrt{2}}-\sqrt{\delta(2-\delta)}\frac{1}{\sqrt{2}}\geq \frac{1}{2\sqrt{2}},
	\end{equation}
	\hl{where the last inequality is the `sufficiently small condition' on $\delta$ referred to in Hypothesis~\ref{hyp}.} Hence, viewing $\R^{2}$ with the coordinate system $(e_{k},e_{k}^{\perp})$, $\gamma$ is a curve which moves strictly from left to right. Moreover, $P$ is an open, convex set \hl{given by a finite intersection of open half-spaces} and is contained in the horizontal strip $B(L_{k},\rho_{k})$ of width $2\rho_{k}$. These considerations imply a bound of order $\rho_{k}$ on the signed variation of the second coordinate function of $\gamma$ inside the set $P$. More precisely, by a geometric argument of \cite{CPT_2005}, extracted in Lemma~\ref{lemma:convex_curve} of Appendix~\ref{app_geom_curves}, we derive
	\begin{equation*}
	\left|\int_{\gamma^{-1}(P)}\langle{\gamma'(t),e_{k}^{\perp}}\rangle\,dt\right| \leq 12\rho_{k}.
	\end{equation*}	
To complete the proof we show that 
\hl{\begin{equation}\label{eq:eqn_to_prove}
\left|\int_{\gamma^{-1}(P)}\langle{\gamma'(t),e_{k}^{\perp}}\rangle\,dt\right|=\int_{\gamma^{-1}(P)}\abs{\hl{\langle{\beta_{p}(t),e_{k}^{\perp}\rangle}}}\,dt. 
\end{equation}}
For any fixed $z_{0}\in P$ the set $P$ \hl{satisfies}
\hl{\begin{equation*}
P\subseteq\bigcap_{q=0}^{p}\left\{z\in\R^{2}\colon k_{q}(z)=k_{q}(z_{0})\right\}=:A_{p}(z_{0}).
\end{equation*}}
\hl{To see this, fix $z_{0}\in P$ and $z\notin A:=A_{p}(z_{0})$. We verify that $z\notin P$, which will prove the assertion. Let $q\in\set{1,\ldots,p}$ be minimal such that $k_{q}(z)\neq k_{q}(z_{0})$. Then $z\in B(L_{k_{r}(z_{0})},\rho_{k_{r}(z_{0})})$ for $1\leq r<q$ and $z\notin B(L_{k_{q}(z_{0})},\rho_{k_{q}(z_{0})})$. Therefore one of the boundary lines of $B(L_{k_{q}(z_{0})},\rho_{k_{q}(z_{0})})$ separates $z$ from $z_{0}$ and accordingly $z$ and $z_{0}$ cannot belong to the same connected component of $B(L_{k_{p}(z_{0})},\rho_{k_{p}(z_{0})})\setminus \bigcup_{1\leq j<k_{p}(z_{0})}\partial B(L_{j},\rho_{j})$. Hence $z$ does not belong to $P$. 
	
We now have everything in place to verify \eqref{eq:eqn_to_prove}: For $A:=A_{p}(z_{0})$, we have that} $\gamma^{-1}(\hl{A})\in\Sigma_{p}$, all functions $k_{q}\circ \gamma$, $0\leq q\leq p$ are constant on $\gamma^{-1}(\hl{A})$ and $\beta_{p}$ is also constant on \hl{$\gamma^{-1}(A)\supseteq \gamma^{-1}(P)$}. It follows that
	\begin{equation*}
	\int_{\gamma^{-1}(P)}\left|\langle{\beta_{p}(t),e_{k}^{\perp}}\rangle\right|\,dt=\left|
	\int_{\gamma^{-1}(P)}\langle{\beta_{p}(t),e_{k}^{\perp}}\rangle\,dt\right|=\left|
	\int_{\gamma^{-1}(P)}\langle{\gamma'(t),e_{k}^{\perp}}\rangle\,dt\right|,
	\end{equation*}
	\hl{which delivers \eqref{eq:eqn_to_prove}.}
\end{proof}

We are now ready to give the proof of Lemma~\ref{prop:Dp}:
\begin{proof}[Proof of Lemma~\ref{prop:Dp}]
	It suffices to prove that the sequence $(\leb(D_{p}))_{p=1}^{\infty}$ is summable. The set $D_{p}$ can be expressed as the union of all sets 
	\begin{equation*}
	D_{p,k}:=\left\{t\in[0,1]\colon k_{p}(\gamma(t))=k,\quad \left|\langle{\beta_{p}(t),e_{k}^{\perp}}\rangle\right|>2^{-p}\right\}
	\end{equation*}
	for $k\geq p$. We observe that
	\begin{equation*}
	D_{p,k}\subseteq \bigcup \gamma^{-1}(P)
	\end{equation*}
	where the union is taken over all \hl{connected} components $P$ of $B(L_{k},\rho_{k})\setminus \bigcup_{1\leq j<k} B(L_{j},\rho_{k})$ for which $k_{p}(z)=k$ for all $z\in P$. Using the bound given by Lemma~\ref{lemma:Dp_integral} and the fact that there are at most $3^{k}$ such \hl{connected} components $P$ we deduce
	\begin{equation*}
	\int_{D_{p,k}}\left|\langle{\beta_{p}(t),e_{k}^{\perp}}\rangle\right|\,dt\leq 3^{k}\cdot 12\rho_{k}.
	\end{equation*}
	Summing this inequality over $k\geq p$ we obtain
	\begin{equation}\label{eq:expectation}
	\int_{D_{p}} \left|\langle{\beta_{p}(t),e_{k_{p}(\gamma(t))}^{\perp}}\rangle\right|\,dt \leq 12\sum_{k=p}^{\infty}3^{k}\rho_{k}\leq4^{-p}
	\end{equation}
	\hl{\begin{itemize}
		\item[(\mylabel{E3}{E3})] The last inequality, which may be written equivalently as $\sum_{k=p}^{\infty}3^{k}\rho_{k}\leq \frac{4^{-p}}{12}$, is a further condition that we impose on the sequence $(\rho_{k})_{k=1}^{\infty}$.
	\end{itemize}}
	 For the random variable $X_{p}\colon[0,1]\to \R$ defined by
	\begin{equation*}
	X_{p}(t)=\langle{\beta_{p}(t),e_{k_{p}(\gamma(t))}^{\perp}}\rangle \chi_{D_{p}},\qquad t\in[0,1],
	\end{equation*}
	\eqref{eq:expectation} gives $\Exp[\left|X\right|]\leq 4^{-p}$. Moreover, the set $D_{p}$ is contained in $\left\{t\in[0,1] \colon \left|X_{p}(t)\right|>2^{-p}\right\}$; \hl{see \eqref{eq:Dp}}. Applying Markov's Inequality, we conclude
	\begin{equation*}
	\leb(D_{p})\leq\leb(\left\{t\colon \left|X_{p}(t)\right|>2^{-p}\right\})\leq \frac{\Exp[\left|X_{p}\right|]}{2^{-p}}<\frac{4^{-p}}{2^{-p}}=2^{-p}.
	\end{equation*}	
\end{proof}
When studying $\gamma^{-1}(E)$ later on, Lemma~\ref{prop:Dp} will allow us to discard the sets $D_{p}$. In the remaining set we have that $\beta_{p}(t)$ is very close to the direction $e_{k_{p}(\gamma(t))}$ of the $p$-th strip containing $\gamma(t)$. The form of this approximation that we will require is recorded in the following lemma.
\begin{lemma}[Under Hypothesis~\ref{hyp}]\label{lemma:outsideDp}
	Let $t\in\gamma^{-1}(E)\setminus D_{p}$. Then, writing $k_{p}$ for $k_{p}(\gamma(t))$,
	\begin{equation*}
	\abs{\frac{\skp{w,e_{k_{p}}^{\perp}}}{\skp{w,e_{k_{p}}}}-\frac{\skp{\beta_{p}(t),w^{\perp}}}{\skp{\beta_{p}(t),w}}}\leq \frac{2^{-p}}{(1-\eta)(1-\delta)}.
	\end{equation*}
\end{lemma}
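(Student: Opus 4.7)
My plan is to reduce the inequality to a direct algebraic manipulation of the quantity $\skp{\beta_p(t), e_{k_p}^\perp}$, which is controlled by the defining condition $t \notin D_p$. First I would observe that if $t \in \gamma^{-1}(E)$, then $\gamma(t)$ lies in infinitely many strips $B(L_k, \rho_k)$, so $k_p(\gamma(t)) < \infty$ and hence the assumption $t \notin D_p$ forces $\abs{\skp{\beta_p(t), e_{k_p}^\perp}} \leq 2^{-p}$.

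Next I would introduce the orthonormal basis $(w, w^\perp)$ and expand everything in coordinates. Writing $\beta_p(t) = a w + b w^\perp$ with $a = \skp{\beta_p(t), w}$, $b = \skp{\beta_p(t), w^\perp}$, and $e_{k_p} = \alpha w + \beta w^\perp$ with $\alpha = \skp{w, e_{k_p}}$, $\beta = \skp{w^\perp, e_{k_p}}$, a short computation gives (with the appropriate perp convention)
\begin{equation*}
\skp{\beta_p(t), e_{k_p}^\perp} = a\beta - b\alpha, \qquad \skp{w, e_{k_p}^\perp} = \beta, \qquad \skp{w, e_{k_p}} = \alpha.
\end{equation*}
Hence
\begin{equation*}
\frac{\skp{w, e_{k_p}^\perp}}{\skp{w, e_{k_p}}} - \frac{\skp{\beta_p(t), w^\perp}}{\skp{\beta_p(t), w}} = \frac{\beta}{\alpha} - \frac{b}{a} = \frac{a\beta - b\alpha}{a\alpha} = \frac{\skp{\beta_p(t), e_{k_p}^\perp}}{\skp{\beta_p(t), w}\skp{w, e_{k_p}}},
\end{equation*}
so the whole estimate reduces to controlling the two factors in the denominator from below.

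The final step is to produce those lower bounds from the standing hypotheses. The condition $e_{k_p} \in C(w, \eta)$, guaranteed by \eqref{eq:line_dir_cone}, gives $\skp{w, e_{k_p}} \geq 1 - \eta$ directly. For the other factor, $\gamma'(s) \in C(w, \delta)$ for every $s$ yields $\skp{\gamma'(s), w} \geq 1 - \delta$ pointwise, and since conditional expectation is monotone this inequality is inherited by $\skp{\beta_p(t), w} = \Exp[\skp{\gamma', w} \mid \Sigma_p](t) \geq 1 - \delta$. Plugging these two bounds together with $\abs{\skp{\beta_p(t), e_{k_p}^\perp}} \leq 2^{-p}$ into the identity above delivers the claimed estimate $2^{-p}/((1-\eta)(1-\delta))$.

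There is no real obstacle in this argument; everything is linear algebra once the key observation $\abs{\skp{\beta_p(t), e_{k_p}^\perp}} \leq 2^{-p}$ is in hand. The only point requiring a small amount of care is the sign/orientation convention for the perpendicular, but since the statement involves an absolute value this is inconsequential.
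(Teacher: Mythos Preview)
Your proof is correct and follows essentially the same route as the paper: both reduce the difference of fractions to a single quotient whose numerator is (up to sign) $\skp{\beta_{p}(t),e_{k_{p}}^{\perp}}$, bounded by $2^{-p}$ since $t\notin D_{p}$, and whose denominator $\skp{w,e_{k_{p}}}\skp{\beta_{p}(t),w}$ is bounded below by $(1-\eta)(1-\delta)$. The paper phrases the numerator identification as a $2\times 2$ determinant rather than via explicit coordinates, but the content is identical.
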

\begin{proof}
	We rewrite the considered expression as
	\begin{equation*}
	\left|\frac{\langle{\beta_{p}(t),w}\rangle\langle{e_{k_{p}},w^{\perp}\rangle -\langle{\beta_{p}(t),w^{\perp}}\rangle\langle{e_{k_{p}},w}\rangle}}{\langle{w,e_{k_{p}}}\rangle\langle{\beta_{p}(t),w}\rangle}\right|.
	\end{equation*}
	The numerator above is precisely the determinant of the $2\times 2$ matrix with columns $\beta_{p}(t)$ and $e_{k_{p}}$, which is given in absolute value by $\abs{\langle{\beta_{p}(t),e_{k_{p}}^{\perp}}\rangle}\leq 2^{-p}$. The denominator is bounded below in absolute value by $(1-\eta)(1-\delta)$.
\end{proof}
\subsection{A function with small set of differentiability points inside $E$.}\label{subsec:function}
Our aim is now to construct a Lipschitz function \hl{$h\colon\R^{2}\to\R$} having only a very small set of differentiability points in $E$. The function $h$ will \hl{be defined as the uniform limit of a sequence of functions $h_{n}\colon \R^{2}\to\R$ of the form
\begin{equation*}
h_{n}(z)=\sum_{k=1}^{n}2^{-m_{k-1}(z)}\sigma_{k-1}(z)\varphi_{k}(z),\qquad n\in\N\cup\set{0},
\end{equation*}
}where \hl{$m_{k}\colon \R^{2}\to \N\cup\set{0}$}, \hl{$\sigma_{k}\colon \R^{2}\to \set{-1,1}$}, \hl{$\varphi_{k}\colon \R^{2}\to \R$} are functions to be constructed.

\paragraph{Definition and properties of $\varphi_{k}$.} 
The construction of the functions $\varphi_{k}$ will be intertwined with that of the lines $L_{k}$, widths $\rho_{k}$ and additional sequences of sets $T_{k}\subseteq \R^{2}$ and numbers $\delta_{k}>0$. 
\hl{\begin{itemize}
	\item[(\mylabel{E4}{E4})] Thus, we prescribe here, that the sequences $(L_{k})_{k=1}^{\infty}$ of lines $L_{k}$ and $(\rho_{k})_{k=1}^{\infty}$ of widths $\rho_{k}$ introduced in \eqref{eq:set_E} to define the set $E$, are in fact constructed according to the following procedure. It is a trivial matter to adapt the procedure described below so that the sequences $(L_{k})_{k=1}^{\infty}$ and $(\rho_{k})_{k=1}^{\infty}$ it produces satisfy the existing conditions \eqref{E1}, \eqref{E2} and \eqref{E3}. We spare the details of this.
\end{itemize}}

The construction begins by setting $T_{0}=\emptyset$. Now for $k\geq 1$ and $T_{k-1}$ already defined as a finite union of lines, we choose the line $L_{k}\subseteq \R^{2}$ so that the set $S_{k}:=L_{k}\cap T_{k-1}$ is finite. The number $\delta_{k}>0$ is then chosen small \hl{according to} the cardinality of $S_{k}$ and then $\rho_{k}>0$ is chosen sufficiently small \hl{depending on all previous data}; these conditions will be made precise later \hl{in \eqref{eq:cond_on_deltak} and \eqref{E5}}. We let $\widetilde{\varphi}_{k}\colon \R^{2}\to\R$ be the function uniquely determined by the following conditions:
\begin{enumerate}[(A)]
	\item\label{constant_along_ek} $\widetilde{\varphi}_{k}$ is constant along all lines parallel to \hl{the line $L_{k}$, that is, along all lines parallel to the direction $e_{k}\in S^{1}$.}
	\item\label{along_wperp} Along each line parallel to $w^{\perp}$ the function \hl{$\widetilde{\varphi}_{k}$} is \hl{constantly equal to} $0$ in the lower \hl{connected} component (with respect to the direction $w^{\perp}$) of $\R^{2}\setminus B(L_{k},\rho_{k})$, grows with slope $1$ inside the strip $B(L_{k},\rho_{k})$ and is \hl{constantly equal to} $\frac{2\rho_{k}}{\langle{w,e_{k}}\rangle}$ on the upper \hl{connected} component of $\R^{2}\setminus B(L_{k},\rho_{k})$.
\end{enumerate}
\hl{Note that $\widetilde{\varphi}_{k}$ is affine on each component of $\R^{2}\setminus \partial B(L_{k},\rho_{k})$.} Next, we define \hl{a function} $\varphi_{k}\colon \R^{2}\to \R$ by
\begin{equation*}
\varphi_{k}(z)=\min\left\{\widetilde{\varphi}_{k}(z),2^{-k}\dist(z,T_{k-1})\right\},
\end{equation*}
\hl{where $\dist(z,\emptyset)=\infty$}, and define $T_{k}$ as the minimal \hl{(finite)} union of lines in $\R^{2}$ which contains $T_{k-1}\cup \partial B(L_{k},\rho_{k})$ and for which $\varphi_{k}$ is affine on each \hl{connected} component of $\R^{2}\setminus T_{k}$. This completes the construction.

The next lemma records the important properties of the functions $(\varphi_{k})_{k=1}^{\infty}$:
\begin{lemma}\label{lemma:var}
	For each $k\in\N$ the function $\varphi_{k}\colon\R^{2}\to\R$ has the following properties:
	\begin{enumerate}[(a)]
		\item\label{var:pcwaff} $\varphi_{k}$ is affine on each \hl{connected} component of $\R^{2}\setminus T_{k}$.
		\item\label{var:sup} $\lnorm{\infty}{\varphi_{k}}\leq \lnorm{\infty}{\widetilde{\varphi}_{k}} \leq \frac{2\rho_{k}}{\langle{w,e_{k}}\rangle}\leq \frac{2\rho_{k}}{1-\eta}$.
		\item\label{var:der_strip} For each point $z\in B(L_{k},\rho_{k})\setminus B(S_{k},\delta_{k})$ we have 
		\begin{enumerate}[(i)]
			\item\label{var:der_strip_1} $B(z,\frac{5\rho_{k}}{\sqrt{\eta}})\cap T_{k-1}=\emptyset$,
			\item\label{var:der_strip_2} $\varphi_{k}=\widetilde{\varphi_{k}}$ on $B(z,\frac{5\rho_{k}}{\sqrt{\eta}})$, and
			\item\label{var:der_strip_3} $\displaystyle D\varphi_{k}(z)=w^{\perp}+\frac{\skp{w,e_{k}^{\perp}}}{\skp{w,e_{k}}}\cdot w$.			
		\end{enumerate} 		
		\item\label{var:der_out} For each point $z\in \R^{2}\setminus B(L_{k},\rho_{k})$ at which the derivative of $\varphi_{k}$ exists we have
		\begin{equation*}
		\norm{D\varphi_{k}(z)}\leq 2^{-k}.
		\end{equation*}
		\item\label{var:der_bound} $\lnorm{\infty}{D\varphi_{k}}\leq \sqrt{1+\left(\frac{1}{1-\eta}\right)^{2}}$.
		\item\label{var:nondiff} For each point $z\in B(L_{k},\rho_{k})\setminus B(S_{k},\delta_{k})$ and each direction $v\in S^{1}\setminus \widehat{C}(w,3\sqrt{\eta})$ there exist a point $u\in\R^{2}$, and numbers $\rho_{k}\leq t_{1}\leq t_{2}\leq \frac{2\rho_{k}}{\sqrt{\eta}}$ such that $z\in [u,u+t_{1}v]$ and
		\begin{equation*}
				\left|\frac{\varphi_{k}(u+t_{1}v)-\varphi_{k}(u)}{t_{1}}-\frac{\varphi_{k}(u+t_{2}v)-\varphi_{k}(u)}{t_{2}}\right|\geq \frac{\sqrt{\eta}}{2}.
		\end{equation*}
	\end{enumerate}
\end{lemma}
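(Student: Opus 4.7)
The proof breaks into six parts, which I would address in roughly the order stated, since later parts rest on the geometric picture produced by earlier ones.

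Parts (a), (b), (d), and (e) are essentially book-keeping. Part (a) is immediate from the definition of $T_k$ as the minimal union of lines making $\varphi_k$ piecewise affine. For (b), the bound $\|\varphi_k\|_\infty \le \|\widetilde\varphi_k\|_\infty$ follows from $\varphi_k = \min\{\widetilde\varphi_k, 2^{-k}\dist(\cdot,T_{k-1})\}$ and $\widetilde\varphi_k \ge 0$; meanwhile a line parallel to $w^\perp$ meets the strip $B(L_k,\rho_k)$ in a segment of length $\tfrac{2\rho_k}{\langle w,e_k\rangle}$, so condition (B) gives the claimed sup bound, which is in turn at most $\tfrac{2\rho_k}{1-\eta}$ because $\langle w,e_k\rangle\ge 1-\eta$. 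For (d), outside the strip $\widetilde\varphi_k$ is locally constant, so any point of differentiability of $\varphi_k$ outside $B(L_k,\rho_k)$ must lie where the minimum equals $2^{-k}\dist(\cdot,T_{k-1})$, whose derivative has norm $\leq 2^{-k}$. For (e) one combines the gradient computation of (c)(iii) (whose norm I get below) with the Lipschitz bound of $2^{-k}\dist(\cdot,T_{k-1})$, using that $\min$ of two Lipschitz functions preserves the Lipschitz constant.

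For (c)(iii), one works in the basis $\{e_k, e_k^\perp\}$. Condition (A) gives $\langle D\widetilde\varphi_k,e_k\rangle=0$, and condition (B) gives $\langle D\widetilde\varphi_k,w^\perp\rangle=1$ inside the strip. Solving these two linear equations yields $D\widetilde\varphi_k = \tfrac{1}{\langle w,e_k\rangle}\,e_k^\perp$, and an elementary rearrangement in the basis $\{w,w^\perp\}$ produces exactly $w^\perp + \tfrac{\langle w,e_k^\perp\rangle}{\langle w,e_k\rangle}\,w$. By (c)(ii), $\varphi_k = \widetilde\varphi_k$ on a neighbourhood of $z$, so this computes $D\varphi_k(z)$.

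The genuine geometric content lies in (c)(i), (c)(ii), and (f). For (c)(i): the set $T_{k-1}$ is a finite union of lines with directions in the cone $C(w,\eta)$, and $S_k=L_k\cap T_{k-1}$ is finite. For any $L\subseteq T_{k-1}$ not parallel to $L_k$, $L$ crosses the strip $B(L_k,\rho_k)$ within distance $\rho_k/\sin\theta_L$ of the intersection point $p\in S_k$, where $\theta_L>0$ is the (finite collection of) angles between lines in $T_{k-1}$ and $L_k$. Parallel lines in $T_{k-1}$ are at a strictly positive distance from $L_k$. Thus imposing that $\rho_k$ be sufficiently small compared both to $\delta_k\min_L \sin\theta_L$ and to these parallel distances (and less than $\sqrt{\eta}\delta_k/5$) forces $B(z,5\rho_k/\sqrt\eta)$ to miss $T_{k-1}$. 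For (c)(ii), by the same smallness of $\rho_k$ one arranges $\dist(z',T_{k-1})\ge 2^k\cdot\tfrac{2\rho_k}{1-\eta}$ on this ball, so the second argument of the $\min$ defining $\varphi_k$ dominates $\widetilde\varphi_k$ throughout $B(z,5\rho_k/\sqrt\eta)$ and the identity $\varphi_k=\widetilde\varphi_k$ follows. I expect (c)(i) to be the most delicate bookkeeping step, since it dictates the hierarchy of smallness $\rho_k\ll\delta_k\ll\mathrm{geometry}(T_{k-1})$ that makes the whole construction coherent.

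Finally for (f), fix $v\in S^1\setminus\widehat C(w,3\sqrt\eta)$; the hypothesis bounds $|\langle v,w\rangle|$ away from $1$ and hence, combined with $e_k\in C(w,\eta)$, forces $|\langle v,e_k^\perp\rangle|\gtrsim\sqrt\eta$. The strategy is to pick $u$ so that the segment $[u,u+t_1v]$ passing through $z$ lies entirely inside $B(L_k,\rho_k)$ and crosses it fully along $e_k^\perp$: on such a segment (c)(ii) plus condition (B) gives an affine increment, so the first difference quotient equals $\tfrac{\langle v,e_k^\perp\rangle}{\langle w,e_k\rangle}$ up to order $\sqrt\eta$. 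For the second, take $t_2 \asymp \rho_k/\sqrt\eta$ long enough that $u+t_2 v$ lies in the upper component of $\R^2\setminus B(L_k,\rho_k)$: then $\widetilde\varphi_k(u+t_2v)-\widetilde\varphi_k(u)=\tfrac{2\rho_k}{\langle w,e_k\rangle}$, and dividing by the large $t_2$ yields a much smaller slope. The difference of the two quotients then works out to at least $\tfrac{\sqrt\eta}{2}$ once $v$ is sufficiently transversal to $w$. The condition $z\notin B(S_k,\delta_k)$ and (c)(i) are used to ensure $\varphi_k=\widetilde\varphi_k$ on the whole segment $[u,u+t_2v]$, so that the computation with $\widetilde\varphi_k$ transfers to $\varphi_k$.
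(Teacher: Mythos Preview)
Your proposal is correct and follows essentially the same approach as the paper. Two small points where the paper's execution differs and is slightly cleaner:

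For part (c)(i), your assertion that all lines of $T_{k-1}$ have direction in $C(w,\eta)$ is not justified (the definition of $T_k$ adds whatever lines are needed to make $\varphi_k$ piecewise affine, and these need not point into the cone). Fortunately your argument does not actually use this: you only need that $T_{k-1}$ is a finite union of lines with $L_k\cap T_{k-1}=S_k$ finite. The paper packages this by directly setting $c_k:=\dist\bigl(T_{k-1},\,L_k\setminus B(S_k,\delta_k)\bigr)>0$ and imposing $\rho_k<\theta_k c_k$ for a suitable small $\theta_k$. Also, in (d) it is not true that a point of differentiability of $\varphi_k$ outside the strip must lie where the minimum equals $2^{-k}\dist(\cdot,T_{k-1})$; it may equally lie where $\varphi_k=\widetilde\varphi_k$ is locally constant, with derivative $0$. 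Either way $\|D\varphi_k\|\le 2^{-k}$, so your conclusion is unaffected.

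For part (f), the paper anchors $u$ on the centre line $L_k$ rather than on the edge of the strip, and takes $t_1$ so that $u+t_1v\in\partial B(L_k,\rho_k)$ and $t_2=2t_1$. Then $\widetilde\varphi_k(u+2t_1v)=\widetilde\varphi_k(u+t_1v)$ (both on or beyond the same edge), so the difference of the two quotients collapses to exactly $\tfrac{1}{2}\bigl|\widetilde\varphi_k(u+t_1v)-\widetilde\varphi_k(u)\bigr|/t_1\ge \rho_k/(2t_1)\ge\sqrt\eta/2$. With $u\in L_k$ one gets $t_1=\rho_k/|\langle v,e_k^\perp\rangle|\le\rho_k/\sqrt\eta$, so $t_2\le 2\rho_k/\sqrt\eta$ as required. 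Your choice of $u$ on the boundary doubles $t_1$ and makes the constraint $t_2\le 2\rho_k/\sqrt\eta$ tight; the paper's placement avoids this.
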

\begin{proof}
	Properties \eqref{var:pcwaff} and \eqref{var:sup} are immediate from the construction. 
	 For \eqref{var:der_strip} we need to impose a condition on $\rho_{k}$ relative to $\delta_{k}$. Since $T_{k-1}$ is a finite union of lines and $L_{k}\setminus B(S_{k},\delta_{k})$ is a finite union of closed line segments and half-rays not intersecting $T_{k-1}$ the quantity
		\begin{equation}\label{eq:ck}
		c_{k}:=\inf\left\{\dist(x,y)\colon x\in T_{k-1},\,y\in L_{k}\setminus B(S_{k},\delta_{k}\hl{/2})\right\}
		\end{equation}
		is positive. \hl{Referring to the paragraph following \eqref{E4}, we also note that \hl{$S_{k}$, $\delta_{k}$ and} $c_{k}$ \hl{are} determined before $\rho_{k}$ is chosen. Therefore, we may impose conditions on $\rho_{k}$ according to $S_{k}$, $\delta_{k}$ and $c_{k}$, as we do in the next passage of text. All of these imposed conditions will then be collated in \eqref{E5} below.} 
		
		For all $z\in B(L_{k},\rho_{k})\setminus B(S_{k},\delta_{k})$ we have that \hl{$\proj_{L_{k}}(z)\in L_{k}\setminus B(S_{k},\delta_{k}/2)$, when we impose the condition $\rho_{k}<\delta_{k}/2$. Therefore,}
		\begin{equation*}
		\dist(z,T_{k-1})\geq c_{k}-\rho_{k}=\left(\hl{\frac{c_{k}}{\rho_{k}}-1}\right)\rho_{k}>\frac{5\rho_{k}}{\sqrt{\eta}},
		\end{equation*}
		where the last inequality is another condition on \hl{$\rho_{k}$.} This proves \eqref{var:der_strip_1}. Given $z\in B(L_{k},\rho_{k})\setminus B(S_{k},\delta_{k})$ and $z'\in B(z,\frac{5\rho_{k}}{\sqrt{\eta}})$ we have
	\begin{equation*}
	\dist(z',T_{k-1})\geq \dist(z,T_{k-1})-\frac{5\rho_{k}}{\sqrt{\eta}}\geq \left(\hl{\frac{c_{k}}{\rho_{k}}-1}-\frac{5}{\sqrt{\eta}}\right)\rho_{k}>\frac{2^{k+1}\rho_{k}}{1-\eta}\geq 2^{k}\lnorm{\infty}{\widetilde{\varphi}_{k}}
	\end{equation*}
	where the penultimate inequality is a further condition on \hl{$\rho_{k}$}. We deduce that $\widetilde{\varphi}_{k}(z')<2^{-k}\dist(z',T_{k-1})$. Hence, $\varphi_{k}(z')=\widetilde{\varphi}_{k}(z')$. This proves \eqref{var:der_strip_2}, after which \eqref{var:der_strip_3} derives easily from the defining properties \eqref{constant_along_ek} and \eqref{along_wperp} of $\widetilde{\varphi}_{k}$.  
	\hl{\begin{itemize}
			\item[(\mylabel{E5}{E5})] To summarise, the proof of \eqref{var:der_strip_1}--\eqref{var:der_strip_3} given above requires the additional condition
			\begin{equation*}
			\rho_{k}<\min\set{\frac{\delta_{k}}{2},\frac{c_{k}}{\frac{2^{k+1}}{1-\eta}+1+\frac{5}{\sqrt{\eta}}}}.
			\end{equation*}
			on $\rho_{k}$, where $c_{k}$ is defined in \eqref{eq:ck}.
	\end{itemize}}

	For \eqref{var:der_out} and \eqref{var:der_bound} we observe that the plane $\R^{2}$ may be decomposed as a union of finitely many (possibly unbounded) polygons, that is finite intersections of half-spaces, on each of which $\varphi_{k}$ is affine and either $\varphi_{k}=\widetilde{\varphi_{k}}$ or $\varphi_{k}=2^{-k}\dist(\cdot,T_{k-1})$. The inequalities of \eqref{var:der_out} and \eqref{var:der_bound} are readily verified for both cases.
	
	Finally we verify \eqref{var:nondiff}: Given $z\in B(L_{k},\rho_{k})\setminus B(S_{k},\delta_{k})$ and $v\in S^{1}\setminus \widehat{C}(w,3\sqrt{\eta})$ we choose $u\in L_{k}$ and so that $z\in u+\R v$. We assume, without loss of generality that $z\in u+[0,\infty)v$ and let $t_{1}$ and $t_{2}$ be defined by the conditions
	\begin{equation*}
	u+t_{j}v\in \partial B(L_{k},j\rho_{k}),\qquad j=1,2.
	\end{equation*}
	Clearly $t_{1}\geq \rho_{k}$, $t_{2}=2t_{1}$ and $z\in [u,u+t_{1}v]$. From elementary geometric considerations and the conditions $e_{k}\in C(w,\eta)$ and $v\in S^{d-1}\setminus \widehat{C}(w,3\sqrt{\eta})$ we derive 
	\begin{equation*}
	\left|\langle{v,e_{k}^{\perp}}\rangle\right|=\frac{\rho_{k}}{t_{1}},\qquad \left|\langle{v,e_{k}}\rangle\right|<(1-3\sqrt{\eta})+\sqrt{\eta(2-\eta)}<1-\sqrt{\eta}.
	\end{equation*}
	Together with the identity $\left|\langle{v,e_{k}^{\perp}}\rangle\right|^{2}+\left|\langle{v,e_{k}}\rangle\right|^{2}=1$, this leads to 
	\begin{equation*}
	t_{1}<\frac{\rho_{k}}{(2\sqrt{\eta}-\eta)^{1/2}}\leq \frac{\rho_{k}}{\sqrt{\eta}}.
	\end{equation*}
	Now, from the definition of $\widetilde{\varphi}_{k}$ it is clear that 
	\begin{equation*}
	\left|\widetilde{\varphi}_{k}(u+t_{1}v)-\widetilde{\varphi}_{k}(u)\right|\geq \rho_{k},\quad \text{and }\quad \widetilde{\varphi}_{k}(u+2t_{1}v)=\widetilde{\varphi}_{k}(u+t_{1}v).
	\end{equation*}
	Moreover, we note that $[u,u+2t_{1}v]\subseteq B(z,\frac{5\rho_{k}}{\sqrt{\eta}})$. Therefore, using \eqref{var:der_strip_2} we have that $\varphi_{k}=\widetilde{\varphi}_{k}$ on $[u,u+2t_{1}v]$. We deduce
	\begin{multline*}
	\left|\frac{\varphi_{k}(u+t_{1}v)-\varphi_{k}(u)}{t_{1}}-\frac{\varphi_{k}(u+2t_{1}v)-\varphi_{k}(u)}{2t_{1}}\right|\\
	=\left|\frac{\widetilde{\varphi}_{k}(u+t_{1}v)-\widetilde{\varphi}_{k}(u)}{t_{1}}-\frac{\widetilde{\varphi}_{k}(u+2t_{1}v)-\widetilde{\varphi}_{k}(u)}{2t_{1}}\right|\geq\frac{\rho_{k}}{2t_{1}}\geq \frac{\sqrt{\eta}}{2}.
	\end{multline*}
\end{proof}
\paragraph{Definition and properties of \hl{$\sigma_{k}\colon\R^{2}\to\set{-1,1}$}.} For each $k\in\N$ we define \hl{the function} $\sigma_{k}\colon \R^{2}\to\hl{\set{-1,1}}$ by
\begin{equation*}
\sigma_{k}(z)=(-1)^{p}
\end{equation*}
where $p\in\N$ is the unique integer satisfying $k_{p-1}(z)\leq k <k_{p}(z)$.

\begin{lemma}\label{lemma:sigma_k}
	For each $k\in\N$, $\sigma_{k}$ is constant on each \hl{connected} component of the set $\R^{2}\setminus \bigcup_{j=1}^{k}\partial B(L_{j},\rho_{j})$.
\end{lemma}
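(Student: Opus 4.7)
The plan is to unpack the definitions in order to express $\sigma_k(z)$ as a function of the indicator quantities $\chi_{B(L_j,\rho_j)}(z)$ for $1 \le j \le k$, and then observe that each such indicator is constant on every component of the complement of $\bigcup_{j=1}^{k}\partial B(L_j,\rho_j)$.

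First I would examine the integer $p = p(z,k)$ appearing in the definition of $\sigma_k(z)$, i.e. the unique $p$ with $k_{p-1}(z) \le k < k_p(z)$. By the inductive definition of the sequence $(k_q(z))_{q \ge 0}$, the values $k_1(z) < k_2(z) < \ldots$ enumerate in increasing order precisely those indices $j \in \N$ for which $z \in B(L_j,\rho_j)$. Consequently the condition $k_{p-1}(z) \le k < k_p(z)$ is equivalent to saying that exactly $p-1$ of the indices $j \in \{1,\ldots,k\}$ satisfy $z \in B(L_j,\rho_j)$. Setting
\begin{equation*}
N_k(z) := \#\set{j \in \set{1,\ldots,k} \colon z \in B(L_j,\rho_j)},
\end{equation*}
we therefore have $\sigma_k(z) = (-1)^{N_k(z)+1}$ for every $z \in \R^2$.

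Next I would fix a connected component $C$ of $\R^2 \setminus \bigcup_{j=1}^{k}\partial B(L_j,\rho_j)$ and argue that $N_k$ is constant on $C$. For each $j \in \{1,\ldots,k\}$ the open set $B(L_j,\rho_j)$ and the open set $\R^2 \setminus \overline{B(L_j,\rho_j)}$ are separated by $\partial B(L_j,\rho_j)$. Since $C$ is connected and disjoint from $\partial B(L_j,\rho_j)$, the set $C$ must lie entirely inside one of these two open sets, so the indicator $\chi_{B(L_j,\rho_j)}$ is constant on $C$. Summing the constancy of these indicators over $j = 1,\ldots,k$ shows that $N_k$ is constant on $C$, and hence so is $\sigma_k$.

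This argument is essentially a direct unpacking of the definitions, so I do not expect any real obstacles. The only point which could potentially cause confusion is the book-keeping establishing the identity $\sigma_k = (-1)^{N_k+1}$, so some care should be taken with the boundary case $N_k(z) = 0$ (i.e. $p = 1$), which corresponds to $z$ lying outside all the strips $B(L_j,\rho_j)$ for $j \le k$.
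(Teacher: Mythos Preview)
Your proof is correct. It is essentially the same argument as the paper's, just packaged differently: the paper proceeds by induction on $k$, showing that $\sigma_k(z)=-\sigma_{k-1}(z)$ if $z\in B(L_k,\rho_k)$ and $\sigma_k(z)=\sigma_{k-1}(z)$ otherwise, which is exactly your formula $\sigma_k=(-1)^{N_k+1}$ unwound recursively via $N_k=N_{k-1}+\chi_{B(L_k,\rho_k)}$.
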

\begin{proof}
	It is clear that $\sigma_{0}\equiv -1$. Let $k\geq 1$ and suppose that $\sigma_{k-1}$ is constant on each \hl{connected} component of $\R^{2}\setminus \bigcup_{j=1}^{k-1}\partial B(L_{j},\rho_{j})$. Given $z\in\R^{2}$, let $p\in \N$ be the unique integer with
	\begin{equation*}
	k_{p-1}(z)\leq k-1<k_{p}(z),
	\end{equation*}
	determining that $\sigma_{k-1}(z)=(-1)^{p}$. The inequalities above express that the point $z$ belongs to precisely $p-1$ strips $B(L_{j},\rho_{j})$ with index $j\in\left\{1,\ldots,k-1\right\}$. Hence, $k_{p}(z)=k$ if $z\in B(L_{k},\rho_{k})$ and $k_{p}(z)>k$ otherwise. From this consideration it follows that 
	\begin{equation*}
	\sigma_{k}(z)=\begin{cases}
	(-1)^{p+1} & \text{if }z\in B(L_{k},\rho_{k}),\\
	(-1)^{p}=\sigma_{k-1}(z) & \text{otherwise.}
	\end{cases}
	\end{equation*}
	This completes the induction step, proving the lemma.
\end{proof}
\paragraph{Definition and properties of \hl{$m_{k}\colon\R^{2}\to\N\cup\set{0}$} and \hl{$h_{k}\colon \R^{2}\to \R$}.}
The functions \hl{$m_{k}\colon\R^{2}\to\N\cup\set{0}$} and \hl{$h_{k}\colon\R^{2}\to \R$} are defined for each $k\in\N$ inductively as follows. Set $m_{0}=h_{0}=0$ on the whole of $\R^{2}$. If $k\geq 1$ and the functions $m_{k-1}$ and $h_{k-1}$ are already defined, we let 
\begin{equation*}
h_{k}(z)=h_{k-1}(z)+2^{-m_{k-1}(z)}\sigma_{k-1}(z)\varphi_{k}(z),\qquad z\in \R^{2}.
\end{equation*}
Finally, whenever $h_{0},\ldots,h_{k}$ and $m_{0},\ldots,m_{k-1}$ are already defined we let
\begin{equation}\label{eq:def_jk}
j_{k}(z):=\max\left\{\hl{j\in\set{1,2,\ldots,k-1}}\colon m_{j}(z)\neq m_{j-1}(z)\right\},
\end{equation}
where we interpret the maximum as zero if the set considered is empty. For $z\in\R^{2}$ let
\begin{equation}\label{def:m_k}
m_{k}(z)=\begin{cases}
m_{k-1}(z)+1 & \text{if }z\in \R^{2}\setminus T_{k}\text{ and }\norm{Dh_{k}(z)-Dh_{j_{k}(z)}(z)}>\varepsilon(m_{j_{k}(z)}(z)),\\
m_{k-1}(z) & \text{ otherwise,}
\end{cases}
\end{equation}
where $(\varepsilon(n))_{n=0}^{\infty}$ is a sequence of positive real numbers, \hl{which will be subject to precisely two simple, additional conditions \eqref{eps1} and \eqref{eps2} imposed at the moments when they are required later on.}


We summarise the important properties of the functions $h_{k}$ and $m_{k}$:
\begin{lemma}\label{lemma:h_k}
	\begin{enumerate}[(a)]
		\item\label{h_m_piecewise} For each $k$ and on each \hl{connected} component of $\R^{2}\setminus T_{k}$ we have that \hl{$h_{k}\colon\R^{2}\to\R$} is affine and \hl{$m_{k}\colon\R^{2}\to\N\cup\set{0}$} is constant.
		\item\label{m_lsc}For all $k$ the function \hl{$m_{k}\colon \R^{2}\to\N\cup\set{0}$} is lower semi-continuous.
		\item\label{Dhl-Dhk} For all $l\geq k$ and all $z\in\R^{2}\setminus T_{l}$ we have
		\begin{equation*}
		\norm{Dh_{l}(z)-Dh_{k}(z)}\leq K(\eta)\sum_{j=m_{k}(z)}^{\infty}\left(2^{-j}+\varepsilon(j)\right),
		\end{equation*}
		where $K(\eta)$ denotes a constant depending only on $\eta$.
		\item\label{Dhl-Dhk_2}
		For all $l\geq k$ and all $z\in\R^{2}\setminus T_{l}$ we have
		\begin{equation*} \norm{Dh_{l}(z)-Dh_{k}(z)}\leq 2^{-m_{k}}+\norm{\sum_{\left\{s\colon k<k_{s}\leq l\right\}}2^{-m_{k_{s}-1}}\sigma_{k_{s}-1}D\varphi_{k_{s}}}.
		\end{equation*}
	\end{enumerate}
\end{lemma}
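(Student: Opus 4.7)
The plan is a single induction on $k$ handling all four parts, resting on the monotonicity $T_{k-1}\subseteq T_k$ built into the construction of $T_k$ as the minimal union of lines containing $T_{k-1}\cup\partial B(L_k,\rho_k)$. Because of this nesting, every component of $\R^{2}\setminus T_k$ sits inside a unique component of $\R^{2}\setminus T_{k-1}$, and iteratively $\partial B(L_j,\rho_j)\subseteq T_k$ for all $j\le k$, so Lemma~\ref{lemma:sigma_k} makes $\sigma_{k-1}$ locally constant on $\R^{2}\setminus T_k$. For~\eqref{h_m_piecewise}, the case $k=0$ is trivial, and in the inductive step the four ingredients $h_{k-1}$ (affine), $m_{k-1}$ (constant), $\sigma_{k-1}$ (constant), $\varphi_k$ (affine, by the defining property of $T_k$) combine through $h_k=h_{k-1}+2^{-m_{k-1}}\sigma_{k-1}\varphi_k$ to give an affine $h_k$ on each component; then $Dh_k$ and $Dh_{j_k(z)}$ are constant per component, so the rule~\eqref{def:m_k} forces $m_k$ to be constant as well. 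For~\eqref{m_lsc}, note that~\eqref{def:m_k} is a non-decreasing update, so $m_k\ge m_{k-1}$ pointwise: at $z_0\in\R^{2}\setminus T_k$ a small neighbourhood lies in one component and $m_k$ is constant there, while at $z_0\in T_k$ we have $m_k(z_0)=m_{k-1}(z_0)$ and $\liminf_{z\to z_0}m_k(z)\ge\liminf_{z\to z_0}m_{k-1}(z)\ge m_{k-1}(z_0)$ by the inductive lower semi-continuity of $m_{k-1}$.

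For~\eqref{Dhl-Dhk_2} I telescope: since $z\notin T_l\supseteq T_q$ for every $q\le l$, each $D\varphi_q(z)$ exists and, by~\eqref{h_m_piecewise}, $m_{q-1}$ and $\sigma_{q-1}$ are locally constant at $z$, hence
\begin{equation*}
Dh_l(z)-Dh_k(z)=\sum_{q=k+1}^{l}2^{-m_{q-1}(z)}\sigma_{q-1}(z)\,D\varphi_q(z).
\end{equation*}
I then split this sum according to whether $q\in\{k_s(z):s\ge 1\}$, i.e.\ whether $z\in B(L_q,\rho_q)$. For the complementary indices, Lemma~\ref{lemma:var}\eqref{var:der_out} gives $\|D\varphi_q(z)\|\le 2^{-q}$; combined with $m_{q-1}(z)\ge m_k(z)$, their total norm is at most $2^{-m_k(z)}\sum_{q>k}2^{-q}\le 2^{-m_k(z)}$. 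What remains is exactly the $k_s$-sum appearing in~\eqref{Dhl-Dhk_2}.

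For~\eqref{Dhl-Dhk}, the main step, I group the telescoped identity according to the jumps of $m_q(z)$. Let $j_0=k<j_1<\cdots<j_r\le l$ be the indices in $(k,l]$ where $m$ jumps at $z$, so $m_{j_i}(z)=m_k(z)+i$, and observe that the function $j_q$ appearing in~\eqref{def:m_k} satisfies $j_q(z)=j_{i-1}$ for every $q\in(j_{i-1},j_i]$. At a non-jump index $q\in(j_{i-1},j_i)$, the inequality in~\eqref{def:m_k} must fail (the alternative $z\in T_q$ being excluded by $z\notin T_l$), which yields $\|Dh_q(z)-Dh_{j_{i-1}}(z)\|\le\varepsilon(m_k(z)+i-1)$. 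Applying this at $q=j_i-1$ and adding one further increment of size at most $2^{-(m_k(z)+i-1)}\|D\varphi_{j_i}(z)\|\le K_0(\eta)\,2^{-(m_k(z)+i-1)}$, with $K_0(\eta):=\sqrt{1+(1-\eta)^{-2}}$ from Lemma~\ref{lemma:var}\eqref{var:der_bound}, delivers
\begin{equation*}
\|Dh_{j_i}(z)-Dh_{j_{i-1}}(z)\|\le\varepsilon(m_k(z)+i-1)+K_0(\eta)\,2^{-(m_k(z)+i-1)}.
\end{equation*}
The identical $\varepsilon$-argument bounds the tail $\|Dh_l(z)-Dh_{j_r}(z)\|\le\varepsilon(m_k(z)+r)$, and telescoping then summing the resulting geometric and $\varepsilon$-series from $j=m_k(z)$ yields the claim with $K(\eta):=\max\{1,K_0(\eta)\}$. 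The main obstacle is the bookkeeping in this final step: correctly reading off $j_q(z)=j_{i-1}$ at non-jump indices, and recognising that on our set $z\notin T_l$ it is necessarily the $\varepsilon$-clause of~\eqref{def:m_k} (not the $T_q$-clause) that forces a ``no-jump'' and thereby supplies the required upper bound.
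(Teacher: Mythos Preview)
Your proof follows the paper's approach closely and is essentially correct; parts~\eqref{h_m_piecewise}, \eqref{m_lsc}, and \eqref{Dhl-Dhk_2} match the paper's arguments (with more detail supplied for~\eqref{m_lsc}, which the paper leaves as a one-line remark), and for~\eqref{Dhl-Dhk} you reproduce the paper's telescoping through the jump indices of $m_q(z)$.

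There is one small slip in part~\eqref{Dhl-Dhk}. You assert ``$j_q(z)=j_{i-1}$ for every $q\in(j_{i-1},j_i]$''. For $i\ge 2$ this is correct, since $j_{i-1}$ is then a genuine jump index; but for $i=1$ it says that the last jump strictly before $q\in(k,j_1]$ occurs at $j_0=k$, which need not be true, as $k$ is merely your starting index. What the failure of the inequality in~\eqref{def:m_k} actually yields at $q=j_1-1$ is $\|Dh_{j_1-1}(z)-Dh_{j_*}(z)\|\le\varepsilon(m_k(z))$ where $j_*\le k$ is the true last jump (possibly $0$), not the bound with $j_*$ replaced by $k$. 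The paper avoids this by telescoping through the global jump indices $r_n$ (minimal with $m_{r_n}(z)=n$) and including an explicit boundary term $\|Dh_{r_t}-Dh_k\|\le\varepsilon(m_k(z))$, obtained from the same ``no jump at $k$'' reasoning applied at index $k$ itself. You can patch your argument identically, or simply note that a triangle inequality through $j_*$ costs at most an extra $\varepsilon(m_k(z))$, which is absorbed into $K(\eta)$; the overall approach and conclusion are unaffected.
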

\begin{proof}
	The statement \eqref{h_m_piecewise} is trivially valid for $m_{0}\equiv h_{0}\equiv 0$. Assume now that \eqref{h_m_piecewise} holds for the objects $T_{j}$, $m_{j}$ and $h_{j}$ for all $j<k$. Then by Lemma~\ref{lemma:sigma_k} and the construction of the sets $T_{j}$ we deduce that $h_{k}$ is affine on each \hl{connected} component of $\R^{2}\setminus T_{k}$. In other words, $Dh_{k}$ is constant on each \hl{connected} component of $\R^{2}\setminus T_{k}$. Moreover, we observe that the function $j_{k}$ is constant on each \hl{connected} component of $\R^{2}\setminus T_{k}$. Referring to the definition of $m_{k}$ above, we conclude that the set of points where $m_{k}\neq m_{k-1}$ (meaning $m_{k}=m_{k-1}+1$), is a union of \hl{connected} components of $\R^{2}\setminus T_{k}$. Applying the induction hypothesis, the proof of \eqref{h_m_piecewise} complete. A simple induction argument based on \eqref{def:m_k} also verifies \eqref{m_lsc}.
	
	We turn our attention to \eqref{Dhl-Dhk}. Let $l\geq k$ and $z\in\R^{2}\setminus T_{l}$. Then both derivatives $Dh_{l}(z)$ and $Dh_{k}(z)$ exist. In what follows we use the fact that all functions $\sigma_{t}$, $m_{t}$, $j_{t}$ and $Dh_{t}$ with index $t\leq l$ are constant on the \hl{connected} component of $\R^{2}\setminus T_{l}$ containing $z$. Since we are only concerned with a neighbourhood of $z$, we will sometimes omit the argument of such functions. We also allow the constant $K(\eta)$ to change in each occurence. Let $(r_{n})_{n\geq 0}$ be the finite sequence of minimal indices $r_{n}$ satisfying $m_{r_{n}}(z)=n$ and $r_{n}\leq l$. By the definition~\hl{\eqref{eq:def_jk}} of the functions $j_{\hl{i}}(z)$ we have
	\begin{equation*}
	j_{\hl{i}}(z)=r_{n-1}\qquad r_{n-1}<\hl{i}\leq r_{n},\qquad n\geq 1.
	\end{equation*}
	Now, combining Lemma~\ref{lemma:var}, \eqref{var:der_bound} and the rule \eqref{def:m_k} governing the growth of the sequence $(m_{k}(z))_{k=0}^{\infty}$, we deduce 
	\begin{equation*}
	\norm{Dh_{r_{n}}-Dh_{r_{n-1}}}\leq \norm{2^{-(n-1)}\sigma_{r_{n}-1}D\varphi_{r_{n}}}+\norm{Dh_{r_{n}-1}-Dh_{r_{n-1}}}\leq
	 K(\eta)2^{-(n-1)}+\varepsilon(n-1),
	\end{equation*}
	for all $n\geq 1$. Choose $s$ and $t$ maximal with $r_{s}\leq l$ and $r_{t}\leq k$. Then $m_{l}(z)=m_{r_{s}}(z)=s$ and $m_{k}(z)=m_{r_{t}}(z)=t$. Moreover, from \eqref{def:m_k} and the bound above we get
	\begin{multline*}
	\norm{Dh_{l}(z)-Dh_{k}(z)}\leq \norm{Dh_{l}-Dh_{r_{s}}}+\norm{Dh_{r_{s}}-Dh_{r_{t}}}+\norm{Dh_{r_{t}}-Dh_{k}}\\
	\leq \varepsilon(s)+K(\eta)\sum_{j=t}^{s-1}(2^{-j}+\varepsilon(j))+\varepsilon(t)\leq K(\eta)\sum_{j=m_{k}(z)}^{\infty}2^{-j}+\varepsilon(j).
	\end{multline*}
	This proves \eqref{Dhl-Dhk}. For \eqref{Dhl-Dhk_2} we note that 
	\begin{equation*}
	\left\|Dh_{l}(z)-Dh_{k}(z)\right\|=\left\|\sum_{j=k+1}^{l}2^{-m_{j-1}}\sigma_{j-1}D\varphi_{j}(z)\right\|.
	\end{equation*}
	The above sum can be split into two parts: firstly the sum over those indices $j$ for which $z\notin B(L_{j},\rho_{j})$ and secondly, the sum over those indices $j$ of the form $k_{s}(z)$. The inequality of \eqref{Dhl-Dhk_2} is obtained simply by leaving the second sum unchanged and bounding the first sum by $2^{-m_{k}}$ using Lemma~\ref{lemma:var} \eqref{var:der_out}.	
\end{proof}

\paragraph{Properties of $h$.}
We bring together all the \hl{pieces} and derive the important properties of the function \hl{$h\colon\R^{2}\to\R$ given by}
\begin{equation}\label{eq:function_h}
h(z)=\sum_{k=1}^{\infty}2^{-m_{k-1}(z)}\sigma_{k-1}(z)\varphi_{k}(z)=\lim_{k\to \infty}h_{k}(z),\qquad z\in \R^{2}.
\end{equation}
\begin{lemma}\label{lemma:h_lip}
The function \hl{$h\colon\R^{2}\to\R$} is well-defined and Lipschitz. 
\end{lemma}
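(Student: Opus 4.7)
The plan is to verify uniform convergence of the partial sums $h_k$, then show each $h_k$ is continuous and has a Lipschitz constant bounded independently of $k$, and finally transfer the Lipschitz bound to $h$ through the uniform limit.

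Well-definedness is immediate from Lemma~\ref{lemma:var}\eqref{var:sup}: the summand $2^{-m_{k-1}(z)}\sigma_{k-1}(z)\varphi_k(z)$ has modulus at most $\lnorm{\infty}{\varphi_k}\leq\frac{2\rho_k}{1-\eta}$, and since $\sum_k\rho_k<\infty$ by our choice of the $\rho_k$ in Section~\ref{section:set}, this yields a uniform Cauchy estimate $\lnorm{\infty}{h_l-h_k}\leq\frac{2}{1-\eta}\sum_{j=k+1}^{l}\rho_j$. Hence $h_k\to h$ uniformly on $\R^2$, and $h$ is bounded.

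For continuity of each $h_k$ I would argue by induction on $k$, the non-trivial point being the step $h_k=h_{k-1}+2^{-m_{k-1}}\sigma_{k-1}\varphi_k$. The factors $\sigma_{k-1}$ and $m_{k-1}$ are constant on each component of $\R^2\setminus T_{k-1}$, so the increment is manifestly continuous there. The subtlety is that these factors may jump across lines of $T_{k-1}$. Here one uses the key cancellation $\rest{\varphi_k}{T_{k-1}}\equiv 0$: since $\widetilde{\varphi}_k\geq 0$ by construction,
\begin{equation*}
\varphi_k(z)=\min\left\{\widetilde{\varphi}_k(z),\,2^{-k}\dist(z,T_{k-1})\right\}=0\qquad\text{for every }z\in T_{k-1}.
\end{equation*}
Since the remaining factors are bounded, the product is continuous across $T_{k-1}$, completing the induction.

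For the uniform Lipschitz bound I would impose the standing condition $\sum_{j=0}^{\infty}\varepsilon(j)<\infty$ on the auxiliary tolerances. Applying Lemma~\ref{lemma:h_k}\eqref{Dhl-Dhk} with $k=0$ (so $Dh_0\equiv 0$ and $m_0\equiv 0$) gives
\begin{equation*}
\norm{Dh_l(z)}\leq K(\eta)\sum_{j=0}^{\infty}(2^{-j}+\varepsilon(j))=:M
\end{equation*}
for all $z\in\R^2\setminus T_l$. Combined with continuity of $h_l$ and the fact that $T_l$ is a finite union of lines, a standard argument of integrating $Dh_l$ along the segment $[z_1,z_2]$---which meets $T_l$ at finitely many points for generic choices of segment, the exceptional cases being handled by continuity and density---yields $\abs{h_l(z_1)-h_l(z_2)}\leq M\abs{z_1-z_2}$ for every $z_1,z_2\in\R^2$. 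Passing $l\to\infty$ in this inequality, which is legitimate by the uniform convergence $h_l\to h$, shows that $h$ is $M$-Lipschitz. The main obstacle is the continuity step: one must observe that the vanishing of $\varphi_k$ on the singular set $T_{k-1}$ absorbs the jumps of the discrete factors $m_{k-1}$ and $\sigma_{k-1}$; the rest reduces to a direct application of the estimates already established.
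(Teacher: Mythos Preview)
Your proof is correct and follows essentially the same approach as the paper's: uniform convergence via the bound $\lnorm{\infty}{\varphi_k}\leq\frac{2\rho_k}{1-\eta}$, continuity of each summand via the vanishing of $\varphi_k$ on $T_{k-1}$, and a uniform Lipschitz bound on the $h_k$ via Lemma~\ref{lemma:h_k}\eqref{Dhl-Dhk} together with summability of $(\varepsilon(n))$. The paper is slightly terser in the last step, observing that since each $h_k$ is continuous and piecewise affine, the uniform bound on $Dh_k$ immediately gives the uniform Lipschitz bound, rather than spelling out the segment-integration argument.
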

\begin{proof}
	We will first verify that $h$ is well-defined and continuous. Lemmas~\ref{lemma:sigma_k} and~\ref{lemma:h_k}~\eqref{h_m_piecewise}, the continuity of $\varphi_{k}$ and the fact that $\varphi_{k}=0$ on each line in the family $T_{k-1}$ ensure that each summand $2^{-m_{k-1}}\sigma_{k-1}\varphi_{k}$ is continuous. Using $\left\|\varphi_{k}\right\|_{\infty}\leq \frac{2\rho_{k}}{1-\eta}$ (Lemma~\ref{lemma:var}~\eqref{var:sup}), the sequence of partial sums $h_{k}$ is easily seen to converge uniformly to $h$ and so $h$ is well-defined and continuous as well. To show that $h$ is Lipschitz, it suffices to show that the functions $h_{k}$ are Lipschitz with uniformly bounded Lipschitz constants. Since the functions $h_{k}$ are continuous and piecewise affine, it suffices to verify that their derivatives $Dh_{k}$ are uniformly bounded. 
	\hl{\begin{itemize}
		\item[(\mylabel{eps1}{$\varepsilon$1})] This is implied by Lemma~\ref{lemma:h_k}~\eqref{Dhl-Dhk}, when we prescribe that the sequence $(\varepsilon(n))_{n=0}^{\infty}$ is summable.
	\end{itemize}}
	
\end{proof}
\paragraph{Sets $G$, $H$ and $F_{m}$.} We now introduce two sets $G,H$ which will be shown to cover the set of points inside $E$ where $h$ has a directional derivative in any direction outside of a small double sided cone. We let
\begin{align}\label{eq:sets_G_H}
G:=\bigcap_{n=1}^{\infty}\bigcup_{k=n}^{\infty}B(S_{k},\delta_{k}),\qquad H:=\left\{z\in E\colon \lim_{k\to \infty}m_{k}(z)=\infty\right\}.
\end{align}
Note that the complement of $G\cup H$ inside of $E$ may be covered by the sets
\begin{equation*}
F_{m}:=\left\{z\in \R^{2}\colon \lim_{k\to \infty}m_{k}(z)\leq m\right\}\setminus G,\qquad m\in\N.
\end{equation*}
The topological properties of the sets $G$, $H$ and $F_{m}$ will be important later on. We note that $G$ and $H$ are both $G_{\delta}$ sets. For $G$ this is clear; for $H$ it follows easily from the fact that $E$ is $G_{\delta}$ and Lemma~\ref{lemma:h_k}, \eqref{m_lsc}. Using Lemma~\ref{lemma:h_k}, \eqref{m_lsc} again, we deduce that each set $F_{m}$ is $F_{\sigma}$.

In the next lemma we show that $h$ is nowhere differentiable inside each set $E\cap F_{m}$. Moreover, we obtain a uniform bound on the degree of non-differentiability.
\begin{lemma}\label{lemma:cover_diff}
	Let $m\in\N$, $z\in E\cap F_{m}$ and $v\in S^{1}\setminus \widehat{C}(w,3\sqrt{\eta})$. Then
	\begin{equation*}
	\limsup_{\varepsilon\to 0}\zeta(h,z,\varepsilon,v)\geq \frac{2^{-m}\sqrt{\eta}}{4}.
	\end{equation*}		
	Hence, in the set $E\setminus (G\cup H)=E\cap \bigcup_{m=1}^{\infty}F_{m}$ we have that $h$ is nowhere differentiable and has no directional derivatives in any direction \hl{outside of} $\widehat{C}(w,3\sqrt{\eta})$.
\end{lemma}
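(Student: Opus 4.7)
My plan is to reduce the claim to a single-scale, single-index non-differentiability witness provided by Lemma~\ref{lemma:var}\eqref{var:nondiff}, and to verify that the remaining summands in the series defining $h$ either vanish exactly or are negligible on the relevant segments. Since $z\in F_m$, every $m_{k-1}(z)\leq m$, and since $z\in E\setminus G$, the point $z$ lies in $B(L_k,\rho_k)$ for infinitely many $k$ while avoiding $B(S_k,\delta_k)$ for all sufficiently large $k$. Consequently, the set $\mathcal K$ of indices $K$ with $z\in B(L_K,\rho_K)\setminus B(S_K,\delta_K)$ is infinite.

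Fixing $K\in\mathcal K$ and applying Lemma~\ref{lemma:var}\eqref{var:nondiff} to the direction $v\in S^1\setminus\widehat{C}(w,3\sqrt{\eta})$, I obtain a point $u\in\R^2$ and scales $\rho_K\leq t_1\leq t_2=2t_1\leq 4\rho_K/\sqrt{\eta}$ with $z\in[u,u+t_1v]\subseteq[u,u+t_2v]$ and a slope-difference for $\varphi_K$ of at least $\sqrt{\eta}/2$. Because $z=u+sv$ for some $s\in[0,t_1]$, the whole segment $[u,u+t_2v]$ lies in $\overline{B}(z,4\rho_K/\sqrt{\eta})\subseteq B(z,5\rho_K/\sqrt{\eta})$, and Lemma~\ref{lemma:var}\eqref{var:der_strip} guarantees this ball is disjoint from $T_{K-1}$ and that $\varphi_K=\widetilde{\varphi}_K$ on it. Hence for every $j\leq K$ the functions $m_{j-1}$ and $\sigma_{j-1}$ are constant on the segment (as $T_{j-1}\subseteq T_{K-1}$), and for every $j<K$ the function $\varphi_j$ is even affine on the segment (as $T_j\subseteq T_{K-1}$).

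Decomposing $h=\sum_j 2^{-m_{j-1}}\sigma_{j-1}\varphi_j$, the summands with $j<K$ thus contribute exactly zero to the slope-difference defining $\zeta$; the $K$-th summand contributes $2^{-m_{K-1}(z)}\,|\sigma_{K-1}(z)|\cdot(\sqrt{\eta}/2)\geq 2^{-m}\sqrt{\eta}/2$; and each summand with $j>K$ contributes at most $8\rho_j/((1-\eta)\rho_K)$ in absolute value, using $|2^{-m_{j-1}}\sigma_{j-1}|\leq 1$ together with $\lnorm{\infty}{\varphi_j}\leq 2\rho_j/(1-\eta)$ from Lemma~\ref{lemma:var}\eqref{var:sup}. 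Summing and taking $\varepsilon_K:=4\rho_K/\sqrt{\eta}$, I obtain
\begin{equation*}
\zeta(h,z,\varepsilon_K,v)\;\geq\; \frac{2^{-m}\sqrt{\eta}}{2}\;-\;\frac{8}{(1-\eta)\rho_K}\sum_{j=K+1}^{\infty}\rho_j.
\end{equation*}
The tail is rendered negligible as $K\to\infty$ through $\mathcal{K}$ by imposing the fast-decay condition $\rho_K^{-1}\sum_{j>K}\rho_j\to 0$ on $(\rho_k)_{k\geq 1}$, an additional constraint consistent with those already accumulated in the construction. Since $\rho_K\to 0$ forces $\varepsilon_K\to 0$, this yields $\limsup_{\varepsilon\to 0}\zeta(h,z,\varepsilon,v)\geq 2^{-m}\sqrt{\eta}/4$ (the factor of $4$ absorbing the tail).

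The final assertion is immediate from the identity $E\setminus(G\cup H)=E\cap\bigcup_{m\geq 1}F_m$ and Proposition~\ref{prop:zeta_charact_diff}\eqref{zeta_charact_diff}, which turns the quantitative lower bound on $\limsup\zeta(h,z,\cdot,v)$ into the non-existence of a directional derivative of $h$ at $z$ in any $v\in S^1\setminus\widehat{C}(w,3\sqrt{\eta})$, and hence into non-differentiability of $h$ at $z$. I expect the main obstacle to be the clean vanishing of the low-index contributions, which hinges precisely on the key geometric property $B(z,5\rho_K/\sqrt{\eta})\cap T_{K-1}=\emptyset$ engineered into $\varphi_K$; the high-index tail estimate is straightforward once the summability hypothesis on $(\rho_k)$ is adopted.
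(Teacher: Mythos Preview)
Your proof is correct and follows essentially the same approach as the paper's: pick an index $K$ with $z\in B(L_K,\rho_K)\setminus B(S_K,\delta_K)$, use Lemma~\ref{lemma:var}\eqref{var:der_strip_1} to ensure the relevant segments avoid $T_{K-1}$, observe that the low-index part of $h$ is affine there while the $K$-th term supplies the $\sqrt{\eta}/2$ slope gap from Lemma~\ref{lemma:var}\eqref{var:nondiff}, and control the high-index tail via $\lnorm{\infty}{\varphi_j}$ and $t_1\geq\rho_K$. The only cosmetic difference is that the paper groups the low-index summands as the single affine function $h_{K-1}$ rather than handling each $j<K$ separately, and it imposes a uniform condition on $(\rho_j)$ giving the bound at every admissible scale rather than only in the limit; neither affects the argument.
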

\begin{proof}
Fixing $\varepsilon>0$, we need to find two line segments passing through $z$, parallel to $v$ and of length at most $\varepsilon$ on which $h$ has slopes differing by at least $2^{-m}\sqrt{\eta}/4$. Since $z\in E$, the numbers $k_{p}:=k_{p}(z)$ are finite and $z\in B(L_{k_{p}},\rho_{k_{p}})$ for all $p\in\N$. Since $z\notin G$, we may choose $p$ sufficiently large so that for $k:=k_{p}$ we have $z\notin B(S_{k},\delta_{k})$. We additionally choose $p$ sufficiently large so that $\frac{2\rho_{k}}{\sqrt{\eta}}<\varepsilon$ and 
\begin{equation*}
m_{k-1}(z)=\max_{j\in\N}m_{j}=:\widetilde{m}\leq m.
\end{equation*}
By Lemma~\ref{lemma:var}, \eqref{var:der_strip_1}, Lemma~\ref{lemma:sigma_k} and Lemma~\ref{lemma:h_k}, \eqref{h_m_piecewise} the functions $\sigma_{k-1}$ and $m_{k-1}$ are constant on $B\left(z,\frac{5\rho_{k}}{\sqrt{\eta}}\right)$. Therefore, for all $y\in B\left(z,\frac{5\rho_{k}}{\sqrt{\eta}}\right)$ we have
	\begin{equation}\label{eq:h_decomp}
	h(y)=h_{k-1}(y)+2^{-\widetilde{m}}\sigma_{k-1}(z)\varphi_{k}(y)+\underbrace{\sum_{j=k+1}^{\infty}2^{-\hl{m_{j-1}(y)}}\sigma_{j-1}(y)\varphi_{j}(y)}_{=(h-h_{k})(y)}.
	\end{equation}
	Moreover, by Lemma~\ref{lemma:h_k}, \eqref{h_m_piecewise}, the function $h_{k-1}$ is affine on $B\left(z,\frac{5\rho_{k}}{\sqrt{\eta}}\right)$.
	
	Let $u,t_{1},t_{2}$ be given by the conclusion of Lemma~\ref{lemma:var}~\eqref{var:nondiff} for $\varphi_{k}$, $z$ and $v$. Then the segments $[u,u+t_{1}v]$ and $[u,u+t_{2}v]$ both contain $z$, have length at most $t_{2}\leq \frac{2\rho_{k}}{\sqrt{\eta}}<\varepsilon$ and are therefore contained in $B\left(z,\frac{5\rho_{k}}{\sqrt{\eta}}\right)$. Hence $h_{k-1}$ restricted to $[u,u+t_{2}v]$ is affine and 
	\begin{equation*}
	\left|\frac{h_{k-1}(u+t_{1}v)-h_{k-1}(u)}{t_{1}}-\frac{h_{k-1}(u+t_{2}v)-h_{k-1}(u)}{t_{2}}\right|=0.
	\end{equation*}
	The corresponding difference \hl{of slopes} for the tail sum in \eqref{eq:h_decomp} \hl{may} be bounded above using $\left|\sigma_{j}\right|\equiv 1$, $\lnorm{\infty}{\varphi_{j}}\leq \frac{2\rho_{j}}{1-\eta}$ and $t_{1},t_{2}\geq \rho_{k}$, leading to
	\begin{multline*}
	\left|\frac{(h-h_{k})(u+t_{1}v)-(h-h_{k})(u)}{t_{1}}-\frac{(h-h_{k})(u+t_{2}v)-(h-h_{k})(u)}{t_{2}}\right|\\
	\leq \frac{4}{\rho_{k}}\sum_{j=k+1}^{\infty}2^{-\widetilde{m}}\cdot \frac{2\rho_{j}}{1-\eta}\leq \frac{2^{-\widetilde{m}}\sqrt{\eta}}{16},
	\end{multline*}
	where the last inequality imposes a condition on the sequence $(\rho_{j})_{j=1}^{\infty}$. 
	\hl{\begin{itemize}
		\item[(\mylabel{E6}{E6})] This condition may be written equivalently as
		\begin{equation*}
		\frac{1}{\rho_{k}}\cdot\sum_{j=k+1}^{\infty}\rho_{j}\leq\frac{\sqrt{\eta}(1-\eta)}{64}.
		\end{equation*}
	\end{itemize}}
	Now combining the two \hl{difference of slopes} bounds above with that of Lemma~\ref{lemma:var}~\eqref{var:nondiff} we obtain
	\begin{equation*}
	\left|\frac{h(u+t_{1}v)-h(u)}{t_{1}}-\frac{h(u+t_{2}v)-h(u)}{t_{2}}\right|\geq \frac{2^{-\widetilde{m}}\sqrt{\eta}}{2}-\frac{2^{-\widetilde{m}}\sqrt{\eta}}{16}\geq \frac{2^{-m}\sqrt{\eta}}{4},
	\end{equation*}
	which completes the proof.
	\end{proof}
We now prove that $h$ is differentiable everywhere in the set $H\setminus G$.
\begin{lemma}\label{lemma:h_diff}
	Let $z\in H\setminus G$. Then $h$ is differentiable at $z$.
\end{lemma}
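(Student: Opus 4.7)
My plan is to produce the candidate derivative as $a:=\lim_{k\to\infty}Dh_k(z)\in\R^{2}$ and then verify, via a three-term decomposition, that $\abs{h(y)-h(z)-\skp{a,y-z}}=o(\norm{y-z})$ as $y\to z$. The limit $a$ exists because $z\in H$ gives $m_k(z)\to\infty$, so Lemma~\ref{lemma:h_k}\eqref{Dhl-Dhk} (together with the summability of $(\varepsilon(n))_{n=0}^{\infty}$ already imposed in the proof of Lemma~\ref{lemma:h_lip}) yields
\begin{equation*}
\norm{Dh_l(z)-Dh_k(z)}\leq K(\eta)\sum_{j=m_k(z)}^{\infty}(2^{-j}+\varepsilon(j))\xrightarrow[k\to\infty]{}0,
\end{equation*}
and the Cauchy property defines $a$. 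The potential ambiguity of $Dh_k(z)$ when $z\in T_k$ is avoided by using the subsequence $k=k_p(z)$ with $p$ sufficiently large: the condition $z\notin G$ allows us to ensure $z\notin B(S_k,\delta_k)$, whence Lemma~\ref{lemma:var}\eqref{var:der_strip_1} places $z$ strictly inside a component of $\R^{2}\setminus T_k$.

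Given $\epsilon>0$, I select $p$ large enough that $k:=k_p(z)$ satisfies $z\notin B(S_k,\delta_k)$, $\norm{Dh_k(z)-a}<\epsilon/3$, $K(\eta)\sum_{j\geq m_k(z)}(2^{-j}+\varepsilon(j))<\epsilon/3$, and $2^{-m_{k-1}(z)}\cdot 2\norm{D\widetilde\varphi_k}_{\infty}<\epsilon/3$. By Lemma~\ref{lemma:var}\eqref{var:der_strip_1},\eqref{var:der_strip_2}, $B(z,5\rho_k/\sqrt{\eta})\cap T_{k-1}=\emptyset$ and $\varphi_k=\widetilde\varphi_k$ on this ball, so $h_{k-1}$ is affine (Lemma~\ref{lemma:h_k}\eqref{h_m_piecewise}) and $\sigma_{k-1}$, $m_{k-1}$ are constant (Lemma~\ref{lemma:sigma_k}) on the ball. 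I then pick $\delta\in(0,5\rho_k/\sqrt{\eta})$ so small that, exploiting the integer-valued lower semicontinuity of $m_k$ from Lemma~\ref{lemma:h_k}\eqref{m_lsc}, we have $m_k(w)\geq m_k(z)$ for all $w\in B(z,\delta)$.

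For $y$ with $r:=\norm{y-z}<\delta$, I decompose
\begin{align*}
h(y)-h(z)-\skp{a,y-z} ={}& \bigl[(h-h_k)(y)-(h-h_k)(z)\bigr]\\
&+\bigl[h_k(y)-h_k(z)-\skp{Dh_k(z),y-z}\bigr]\\
&+\skp{Dh_k(z)-a,y-z}
\end{align*}
and bound each summand by $(\epsilon/3)r$. For the first summand, apply Lemma~\ref{lemma:h_k}\eqref{Dhl-Dhk} pointwise along the segment $[z,y]$, where each $w_t:=z+t(y-z)\in B(z,\delta)$ gives $m_k(w_t)\geq m_k(z)$, and integrate: for each $l\geq k$,
\begin{equation*}
\abs{(h_l-h_k)(y)-(h_l-h_k)(z)}\leq \int_{0}^{1}\norm{D(h_l-h_k)(w_t)}r\,dt\leq K(\eta)\sum_{j\geq m_k(z)}(2^{-j}+\varepsilon(j))\cdot r,
\end{equation*}
then pass to the limit $l\to\infty$ via the uniform convergence $h_l\to h$. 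For the middle summand, the affine structure on the ball reduces the expression to $2^{-m_{k-1}(z)}\sigma_{k-1}(z)\bigl[\widetilde\varphi_k(y)-\widetilde\varphi_k(z)-\skp{D\widetilde\varphi_k(z),y-z}\bigr]$, whose absolute value is bounded by $2^{-m_{k-1}(z)}\cdot 2\norm{D\widetilde\varphi_k}_{\infty}r$. The third summand is trivially bounded by $\norm{Dh_k(z)-a}r$. Summing gives $\abs{h(y)-h(z)-\skp{a,y-z}}\leq\epsilon r$, which proves the claim.

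The main subtle point is promoting the pointwise estimate of Lemma~\ref{lemma:h_k}\eqref{Dhl-Dhk} to a bound that is uniform along the segment $[z,y]$ and stable under $l\to\infty$; this is achieved precisely by combining the lower semicontinuity recorded in Lemma~\ref{lemma:h_k}\eqref{m_lsc} with the uniform convergence $h_l\to h$. The remaining technicalities regarding $z$ or interior points $w_t$ lying on lines of $T_l$ are handled automatically because such points form a null set on the segment and the integral bound is insensitive to null modifications of the integrand.
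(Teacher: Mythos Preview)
Your proof is correct and follows essentially the same strategy as the paper: define the candidate derivative as the limit of $Dh_{k_p}(z)$ using Lemma~\ref{lemma:h_k}\eqref{Dhl-Dhk}, then verify differentiability via a decomposition exploiting the affine structure on the balls $B(z,5\rho_{k_p}/\sqrt{\eta})$ from Lemma~\ref{lemma:var}\eqref{var:der_strip_1}. The only notable difference is that the paper works with $g_p:=h_{k_p-1}$, which is genuinely affine on the ball, so the middle term of the decomposition vanishes identically; you instead use $h_{k_p}$ and compensate with the extra bound $2^{-m_{k-1}(z)}\cdot 2\lnorm{\infty}{D\widetilde\varphi_k}\to 0$, and you invoke lower semicontinuity of $m_k$ rather than constancy of $m_{k-1}$ on the ball---both harmless variations.
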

\begin{proof}
	Since $z\in H\setminus G\subseteq E\setminus G$ we have $k_{p}:=k_{p}(z)<\infty$ for all $p\in\N$ and that $z\in B(L_{k_{p}},\rho_{k_{p}})\setminus B(S_{k_{p}},\delta_{k_{p}})$ for all sufficiently large $p\in\N$. By Lemma~\ref{lemma:var}~\eqref{var:der_strip_1} and Lemma~\ref{lemma:h_k}~\eqref{h_m_piecewise} there is, for each such $p$, a neighbourhood $B_{p}:=B(z,\frac{5\rho_{k_{p}}}{\sqrt{\eta}})$ of $z$ on which the function $h_{k_{p}-1}$ is affine. In particular each function $h_{k_{p}-1}$ is differentiable at $z$. Set $g_{p}=h_{k_{p}-1}$. 
	
	By Lemma~\ref{lemma:h_k}, \eqref{h_m_piecewise} we have that $m_{k_{p}-1}$ is constant on the set $B_{p}$. Hence, from the inequality of Lemma~\ref{lemma:h_k}~\eqref{Dhl-Dhk}, we may derive
	\begin{equation*}
	\lip((g_{q}-g_{p})|_{B_{p}})\leq K(\eta)\sum_{j=m_{k_{p}-1}(z)}^{\infty}2^{-j}+\varepsilon(j)
	\end{equation*}
	for $q\geq p$. Since this bound is independent of $q\geq p$ and the functions $g_{q}$ converge uniformly to $h$ as $q\to\infty$, we obtain
	\begin{equation*}
	\lip((h-g_{p})|_{B_{p}})\leq K(\eta)\sum_{j=m_{k_{p}-1}(z)}^{\infty}2^{-j}+\varepsilon(j).
	\end{equation*}
	As $p\to\infty$ the lower index $m_{k_{p}-1}(z)$ in the sums above tends to $\infty$, because $z\in H$. We conclude that 
	\begin{equation*}
	\lim_{p\to\infty}\sup_{q\geq p}\lip((g_{q}-g_{p})|_{B_{p}})=\lim_{p\to\infty}\lip((h-g_{p})|_{B_{p}})=0.
	\end{equation*}
	Moreover, for any $q\geq p$ we have $\norm{Dg_{q}(z)-Dg_{p}(z)}\leq \lip((g_{q}-g_{p})|_{B_{p}})$. Therefore, the sequence $(Dg_{p}(z))$ is a Cauchy sequence. Let $L\in\R^{2}$ denote its limit.
	
	We are now ready to verify the differentiability of $h$ at $z$ with $Dh(z)=L$. Let $e\in S^{1}$ and $\varepsilon>0$. Now choose $p$ large enough so that $\lip((h-g_{p})|_{B_{p}})<\varepsilon/3$ and $\norm{Dg_{p}(z)-L}\leq \varepsilon/3$. Choose $\delta_{p}>0$ small enough so that $B(z,\delta_{p})\subseteq B_{p}$. In particular, this ensures that $g_{p}$ is affine on the ball of radius $\delta_{p}$ around $z$. Now, for all $t\in(-\delta_{p},\delta_{p})$ we have
	\begin{multline*}
	\abs{h(z+te)-h(z)-tL(e)}\leq \abs{(h-g_{p})(z+te)-(h-g_{p})(z)}\\+\abs{g_{p}(z+te)-g_{p}(z)-tDg_{p}(z)(e)}+\abs{tDg_{p}(z)(e)-tL(e)}\\
	\leq \frac{\varepsilon}{3}\abs{t}+0+\frac{\varepsilon}{3}\abs{t}<\varepsilon \abs{t}.
	\end{multline*}
	
\end{proof}

\subsection{Pure Unrectifiability}
To complete the proof of Theorem~\ref{thm:main_result}, we show that the set $G\cup H$ is purely unrectifiable.
\begin{lemma}\label{lemma:G_zero}
	Let $\gamma\colon I\to\R^{2}$ be a $\C^{1}$ curve, $v\in S^{\hl{1}}$ and $\theta\in(0,1)$ such that
	\begin{equation*}
	\gamma'(t)\in C(v,\theta) \qquad \text{for all $t\in I$.}
	\end{equation*}
	Then $\leb\left(\gamma^{-1}(G)\right)=0$. Moreover, for any one dimensional subspace $U\subseteq \R^{2}$ the projection $\pi_{U}(G)$ has $1$-dimensional Lebesgue measure zero.
\end{lemma}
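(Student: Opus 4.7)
The plan is to exploit the fact that each level set $B(S_k,\delta_k)$ is a union of at most $|S_k|$ open disks of radius $\delta_k$, together with the freedom to choose the sequence $(\delta_k)$ arbitrarily small relative to $|S_k|$. I would first impose, retroactively, the summability condition
\begin{equation*}
\sum_{k=1}^{\infty} |S_k|\,\delta_k < \infty,
\end{equation*}
which is compatible with the construction in Section~\ref{section:set}, since $\delta_k$ was specified to be small relative to $|S_k|$ with the precise smallness to be fixed later.

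For the projection statement, I would observe that any orthogonal projection $\pi_U\colon \R^2\to U$ onto a one-dimensional subspace is $1$-Lipschitz, so $\pi_U(B(x,\delta_k))$ is an interval of length at most $2\delta_k$ for every $x\in S_k$. Hence
\begin{equation*}
\leb(\pi_U(B(S_k,\delta_k)))\leq 2|S_k|\,\delta_k.
\end{equation*}
Since $G\subseteq \bigcup_{k\geq n} B(S_k,\delta_k)$ for every $n$, we get $\leb(\pi_U(G))\leq 2\sum_{k\geq n}|S_k|\,\delta_k$, which tends to zero.

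For the curve statement, I would use the cone condition $\gamma'(t)\in C(v,\theta)$ to show that $\gamma$ is bi-Lipschitz in the $v$-direction. Specifically, for $s<t$ in $I$,
\begin{equation*}
|\gamma(t)-\gamma(s)|\geq \langle \gamma(t)-\gamma(s),v\rangle = \int_s^t \langle\gamma'(\tau),v\rangle\,d\tau \geq (1-\theta)(t-s).
\end{equation*}
Consequently, each preimage $\gamma^{-1}(B(x,\delta_k))$ has Lebesgue measure at most $\tfrac{2\delta_k}{1-\theta}$, so $\leb(\gamma^{-1}(B(S_k,\delta_k)))\leq \tfrac{2|S_k|\,\delta_k}{1-\theta}$. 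Summing and applying the Borel--Cantelli style argument $\gamma^{-1}(G)\subseteq \gamma^{-1}\bigl(\bigcup_{k\geq n}B(S_k,\delta_k)\bigr)$ yields $\leb(\gamma^{-1}(G))=0$.

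There is no real obstacle here; the whole argument is a soft consequence of controlling the total projected/parametrised length of the $\delta_k$-neighbourhoods of the finite sets $S_k$. The only subtlety worth noting is that the required summability $\sum_k |S_k|\,\delta_k <\infty$ becomes one more condition on the sequence $(\delta_k)_{k=1}^\infty$ and should be recorded among the conditions fixed during the construction.
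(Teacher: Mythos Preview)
Your proof is correct and follows essentially the same route as the paper: both impose the summability condition $\sum_{k}|S_{k}|\delta_{k}<\infty$, bound $\leb(\gamma^{-1}(B(x,\delta_{k})))\leq \frac{2\delta_{k}}{1-\theta}$ via the lower bound $\langle\gamma',v\rangle\geq 1-\theta$ (the paper packages this as Lemma~\ref{lemma:crossing}), and then sum over $x\in S_{k}$ and $k\geq n$; the projection argument is likewise identical.
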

\begin{proof}
	Fix $\varepsilon>0$. Imposing the condition
	\begin{equation}\label{eq:cond_on_deltak}
	\sum_{k=1}^{\infty}\left|S_{k}\right|\delta_{k}<\infty
	\end{equation}
	on the sequence $(\delta_{k})_{k=1}^{\infty}$ \hl{(as we may according to the construction of \eqref{E4})}, we can choose $n\in\N$ sufficiently large so that 
	\begin{equation*}
	\sum_{k=n}^{\infty}\left|S_{k}\right|\delta_{k}<\frac{(1-\theta)\varepsilon}{2}.
	\end{equation*}
	Then for each $k\geq n$ and each point $x\in S_{k}$, we may apply Lemma~\ref{lemma:crossing} with $W=B(x,\delta_{k})$ to get that $\leb(\gamma^{-1}(B(x,\delta_{k})))\leq \frac{2\delta_{k}}{1-\theta}$. Summing this inequality over all $x\in S_{k}$ and then all $k\geq n$ gives
	\begin{equation*}
	\gamma^{-1}\left(\bigcup_{k=n}^{\infty}B(S_{k},\delta_{k})\right)\leq \frac{2}{1-\theta}\cdot \sum_{k=n}^{\infty}\left|S_{k}\right|\delta_{k}<\varepsilon.
	\end{equation*}
	For the `moreover' part, it suffices to observe that for any $n\geq 1$ the sum $\sum_{k=n}^{\infty}\abs{S_{k}}\cdot2\delta_{k}$ is an upper bound on the one-dimensional Lebesgue measure of any projection $\pi_{U}(G)$.
\end{proof}
Lemma~\ref{lemma:G_zero} clearly implies that the set $G$ is purely unrectifiable. Thus, we are left needing to prove the pure unrectifiability of $H\setminus G$. For a given $\C^{1}$ curve $\gamma\colon I\to \R^{2}$ with some mild restrictions we will show that the set of points $t\in I$ for which $\gamma(t)\in H\setminus G$ may be modelled by the set of points at which some martingale (see Definition~\ref{def:martingale}) associated to $\gamma$ becomes large. We then appeal to martingale theory to argue that such a set is small in measure. The quantity considered in the next lemma for points $z=\gamma(t)\in E$ will be well approximated, as a consequence of Lemma~\ref{lemma:outsideDp}, by the aforementioned martingale.
\begin{lemma}\label{lemma:der_grow}
	Let $z\in H\setminus G$. Then, writing $k_{s}:=k_{s}(z)$, we have
	\begin{equation*}
	\sup_{q\in\N}\abs{\sum_{s=0}^{2q-1}(-1)^{s}\frac{\skp{w,e_{k_{s}}^{\perp}}}{\skp{w,e_{k_{s}}}}}=\infty.
	\end{equation*}
\end{lemma}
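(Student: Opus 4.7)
The plan is to argue by contradiction: assume that $T_q:=\sum_{s=0}^{2q-1}(-1)^s A_s$ is uniformly bounded in $q$, where $A_s:=\langle w,e_{k_s}^\perp\rangle/\langle w,e_{k_s}\rangle$. Because the cone condition \eqref{eq:line_dir_cone} forces $|A_s|\le C(\eta)$ for every $s\ge 1$, this assumption upgrades to a uniform bound $|\sum_{s=1}^{N}(-1)^s A_s|\le M$ on the partial sums along the full sequence $N\in\N$, for some constant $M$. The goal is to show that this then traps the sequence $m_k(z)$ at a finite level, contradicting the defining condition of $H$.

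The link between the partial sums above and the derivatives $Dh_k(z)$ is Lemma~\ref{lemma:h_k}\eqref{Dhl-Dhk_2}. Since $z\notin G$, there exists $s_0\in\N$ such that $z\in B(L_{k_s},\rho_{k_s})\setminus B(S_{k_s},\delta_{k_s})$ for all $s\ge s_0$. Combining Lemma~\ref{lemma:var}\eqref{var:der_strip_3} with the identity $\sigma_{k_s-1}(z)=(-1)^s$, one has
\begin{equation*}
2^{-m_{k_s-1}(z)}\sigma_{k_s-1}(z)D\varphi_{k_s}(z)=2^{-m_{k_s-1}(z)}(-1)^s\bigl(w^\perp+A_s w\bigr)\qquad\text{for all }s\ge s_0.
\end{equation*}
Consequently, for $k\ge k_{s_0}$ and $l>k$, Lemma~\ref{lemma:h_k}\eqref{Dhl-Dhk_2} bounds $\|Dh_l(z)-Dh_k(z)\|$ by $2^{-m_k(z)}$ plus the norm of a sum of precisely these strip terms.

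The technical core of the argument is Abel summation. The weights $g_s:=2^{-m_{k_s-1}(z)}$ are non-increasing in $s$, so summation by parts, together with the uniform bounds $|\sum_{s=1}^{N}(-1)^s A_s|\le M$ and $|\sum_{s=1}^{N}(-1)^s|\le 1$, controls the $w$-component of the strip sum above by $2g_a M$ and the $w^\perp$-component by $2g_a$, where $a$ is the smallest strip index exceeding $k$. Since $g_a\le 2^{-m_k(z)}$, one obtains the master inequality
\begin{equation*}
\|Dh_l(z)-Dh_k(z)\|\le (2M+3)\cdot 2^{-m_k(z)}\qquad\text{for all }l>k\ge k_{s_0}.
\end{equation*}

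To close the argument, one invokes $z\in H$: there are infinitely many indices $r_n$ at which $m_{r_n}(z)=n>m_{r_n-1}(z)=n-1$, and the defining rule \eqref{def:m_k} forces $\|Dh_{r_n}(z)-Dh_{r_{n-1}}(z)\|>\varepsilon(n-1)$. Combined with the master inequality applied with $k=r_{n-1}$, $l=r_n$ (admissible once $r_{n-1}\ge k_{s_0}$), this produces $\varepsilon(n-1)\cdot 2^{n-1}<2M+3$. Imposing the construction-level condition $\varepsilon(n)\cdot 2^n\to\infty$ as $n\to\infty$ (for instance $\varepsilon(n):=(n+1)2^{-n}$, which is summable as required by Lemma~\ref{lemma:h_lip}) then yields a contradiction for $n$ large. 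The main obstacle to handle carefully is the Abel summation bookkeeping, especially the contributions of the finitely many exceptional indices $s<s_0$ and stipulating a new decay condition on $\varepsilon$ that is compatible with the earlier Lipschitz bound.
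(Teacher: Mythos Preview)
Your argument is correct and shares the same skeleton as the paper's: both combine the growth rule \eqref{def:m_k} (giving $\|Dh_{r_n}(z)-Dh_{r_{n-1}}(z)\|>\varepsilon(n-1)$) with Lemma~\ref{lemma:h_k}\eqref{Dhl-Dhk_2} and the explicit form of $D\varphi_{k_s}(z)$ from Lemma~\ref{lemma:var}\eqref{var:der_strip_3} to relate derivative increments to the alternating sums of the $A_s$. The one noteworthy difference is that you invoke Abel summation to handle the variable weights $g_s=2^{-m_{k_s-1}(z)}$, whereas the paper observes that these weights are \emph{constant} on each block $\{s:r_{n-1}<k_s\le r_n\}$ (all equal to $2^{-(n-1)}$, since $m_j(z)=n-1$ for $r_{n-1}\le j<r_n$) and simply factors them out, arriving directly at
\[
\Bigl|\sum_{\{s:\,r_{n-1}<k_s\le r_n\}}(-1)^s A_s\Bigr|\ge 2^{n-1}\varepsilon(n-1)-2.
\]
This makes the paper's version marginally shorter and avoids the contradiction framing, but the two routes are otherwise interchangeable; in particular both impose the same compatibility requirement on $(\varepsilon(n))$ (summable with $2^n\varepsilon(n)\to\infty$ --- the paper fixes $\varepsilon(n)=n^{-2}$). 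Your Abel-summation bound has the minor advantage of working uniformly over \emph{all} pairs $k<l$, not just the jump indices $r_{n-1},r_n$, though that extra generality is not needed here.
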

Let us explain informally the idea behind the present lemma. Since $z\notin G$ all derivatives $D\varphi_{k_{r}}$ with $r$ sufficiently large have the form of Lemma~\ref{lemma:var}~\eqref{var:der_strip_3}. Hence they all have component $1$ in the $w^{\perp}$ direction. In the summands $2^{-m_{k_{s}}-1}\sigma_{k_{s}-1}\varphi_{k_{s}}$ of $h$ \hl{(see \eqref{eq:function_h})}, the alternating factor $\sigma_{k_{s}-1}=(-1)^{s}$ ensures that the sum of these derivative components in the $w^{\perp}$ direction is alternating and therefore cannot get large. On the other hand, $z$ being in $H$ requires that $m_{k}(z)$ grows to infinity \hl{(see \eqref{eq:sets_G_H})}. The growth of $m_{k}(z)$ is induced by growth of \hl{the} derivative of the partial sums $h_{k}$ \hl{(see \eqref{def:m_k})}. With the derivative of these sums in the $w^{\perp}$ direction staying small, we conclude that their derivative in the $w$ direction must become large and so we derive a lower bound on the sum of the derivative components in the $w$ direction, i.e. the quantity $\left|\sum(-1)^{s}\frac{\langle{w,e_{k_{s}}^{\perp}}\rangle}{\langle{w,e_{k_{s}}}\rangle}\right|$. 

We now present this argument formally.
\begin{proof}[Proof of Lemma~\ref{lemma:der_grow}]
	It is sufficient to prove 
	\begin{equation}\label{eq:sup}
	\sup_{p<q}\abs{\sum_{p< s\leq q}(-1)^{s}\frac{\skp{w,e_{k_{s}}^{\perp}}}{\skp{w,e_{k_{s}}}}}=\infty.
	\end{equation}
	Let $(r_{n})_{n=1}^{\infty}$ be the sequence of minimal indices $r_{n}=r_{n}(z)$ with $m_{r_{n}}(z)=n$. The rule \eqref{def:m_k} governing the growth of $m_{k}(z)$ implies that all derivatives $Dh_{r_{n}}(z)$ exist and 
	\begin{equation*}
	\norm{Dh_{r_{n+1}}(z)-Dh_{r_{n}}(z)}>\varepsilon(m_{r_{n}}(z))=\varepsilon(n)
	\end{equation*}
	for all $n$. Since $z\in E\setminus G$ we have that $z\in B(L_{k_{s}},\rho_{k_{s}})\setminus B(S_{k_{s}},\rho_{k_{s}})$ for all sufficiently large $k_{s}$. Hence for all sufficiently large $k_{s}$ we have an expression for the derivative $D\varphi_{k_{s}}(z)$ given by Lemma~\ref{lemma:var}~\eqref{var:der_strip_3}. This allows for refinement of the inequality of Lemma~\ref{lemma:h_k}~\eqref{Dhl-Dhk_2}. For all sufficiently large $n\in\N$, we get namely
	\begin{multline*}
	\norm{Dh_{r_{n+1}}(z)-Dh_{r_{n}}(z)}\\\leq2^{-n}+2^{-n}\abs{\sum_{\left\{s\colon r_{n}<k_{s}\leq r_{n+1}\right\}}(-1)^{s}}+2^{-n}\abs{\sum_{\left\{s\colon r_{n}<k_{s}\leq r_{n+1}\right\}}(-1)^{s}\frac{\langle{w,e_{k_{s}}^{\perp}}\rangle}{\langle{w,e_{k_{s}}}\rangle}}\\
	\leq 2^{-n}\left(2+\abs{\sum_{\left\{s\colon r_{n}<k_{s}\leq r_{n+1}\right\}}(-1)^{s}\frac{\langle{w,e_{k_{s}}^{\perp}}\rangle}{\langle{w,e_{k_{s}}}\rangle}}\right).
	\end{multline*}
	Combining the upper and lower bounds on $\norm{Dh_{r_{n+1}}(z)-Dh_{r_{n}}(z)}$ derived above, we deduce
	\begin{equation*}
	\abs{\sum_{\left\{s\colon r_{n}<s\leq r_{n+1}\right\}}(-1)^{s}\frac{\langle{w,e_{k_{s}}^{\perp}}\rangle}{\langle{w,e_{k_{s}}}\rangle}}\geq 2^{n}\varepsilon(n)-2
	\end{equation*}
	\hl{for all sufficiently large $n$.} Up until now we have only required the sequence $(\varepsilon(n))_{n=1}^{\infty}$ to be summable\hl{; see \eqref{eps1}}. 
	\hl{\begin{itemize}
		\item[(\mylabel{eps2}{$\varepsilon$2})] Therefore, we may now prescribe that $\varepsilon(n)=\frac{1}{n^{2}}$ for all $n\in\N$. 
	\end{itemize}}
	 The latter expression \hl{$2^{n}\varepsilon(n)-2$ in the inequality} above is then unbounded for $n\in \N$ and provides a lower bound for the supremum in \eqref{eq:sup}.
\end{proof}
We recall the definition of a martingale; see for example~\cite[p.~94]{williams_1991}.
\begin{definition}\label{def:martingale}
	Let $(\Omega,\mathcal{F},\mu)$ be a measure space and $(\mathcal{F}_{n})_{n=0}^{\infty}$ be a filtration on $(\Omega,\mathcal{F})$. A sequence $(X_{n})_{n=0}^{\infty}$ of measurable functions $X_{n}\colon \Omega\to \R$ is called a \emph{martingale} with respect to $(\mathcal{F}_{n})_{n=0}^{\infty}$ and $\mu$ if it satisfies the following conditions:
	\begin{enumerate}[(i)]
		\item\label{mart_adapted} $X_{n}\in L^{1}(\Omega,\mathcal{F}_{n},\mu)$ for each $n$. In particular, $X_{n}$ is $\mathcal{F}_{n}$-measurable for each $n$. 
		\item\label{mart_cond_exp} $\Exp[X_{n+1}|\mathcal{F}_{n}]=X_{n}$ for each $n$.			
	\end{enumerate}
	If, in \eqref{mart_cond_exp}, the equality is weakened to the inequality $\geq$ then we call $(X_{n})_{n=0}^{\infty}$ a \emph{submartingale} with respect to $(\mathcal{F}_{n})_{n=0}^{\infty}$ and $\mu$.
\end{definition}

\begin{proposition}\label{thm:mrtgl}
	Let $v\in S^{1}$, $c>0$ and $\gamma\colon I\to \R^{2}$ be a $\C^{1}$ curve with 
	\begin{equation}\label{eq:gamma_v}
	\langle{\gamma'(t),v}\rangle\geq c,\qquad \text{for all $t\in I$}.
	\end{equation}
	Let $(\Sigma_{p})_{p=0}^{\infty}$ be a filtration on $I$ and $\beta_{p}:=\Exp[\gamma'|\Sigma_{p}]$ for each $p\geq 0$. Then the sequence of functions
	\begin{equation*}
	I\to\R,\quad t\mapsto\frac{\langle{\beta_{p}(t),v^{\perp}}\rangle}{\langle{\beta_{p}(t),v}\rangle},\qquad p\geq 0,
	\end{equation*}
	is a martingale with respect to the filtration $(\Sigma_{p})_{p=0}^{\infty}$ and probability measure
	\begin{equation*}
	\mu^{v}(A):=\frac{\int_{A}\langle{\gamma',v}\rangle\,d\leb}{\int_{I}\langle{\gamma',v}\rangle\,d\leb}\, \qquad A\subseteq I.
	\end{equation*}
	Moreover
	\begin{equation*}
	\lnorm{{L^{2}(\mu^{v})}}{\frac{\langle{\beta_{p},v^{\perp}}\rangle}{\langle{\beta_{p},v}\rangle}}\leq \frac{K(\gamma)}{c} \qquad\text{for all $p\geq 0$,}
	\end{equation*}
	where $K(\gamma)$ is a constant depending only on $\gamma$.
\end{proposition}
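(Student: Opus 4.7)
The plan is to check, in order, (i) that each function $g_p(t) := \skp{\beta_p(t), v^\perp}/\skp{\beta_p(t), v}$ is $\Sigma_p$-measurable and integrable with respect to $\mu^v$, (ii) the uniform pointwise bound $\abs{g_p} \le 1/c$, which settles the $L^2(\mu^v)$ estimate at once, and (iii) the martingale identity $\Exp_{\mu^v}[g_{p+1} \mid \Sigma_p] = g_p$. The main tool for (iii) is the standard change-of-measure formula for conditional expectations combined with the tower property; everything else is essentially pointwise reasoning.

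For (i) and (ii), I would first note that $\gamma'$ takes values in $S^1$, so Jensen's inequality for conditional expectation yields $\norm{\beta_p} \le \Exp_\leb[\norm{\gamma'} \mid \Sigma_p] = 1$, whence $\abs{\skp{\beta_p, v^\perp}} \le 1$. Hypothesis \eqref{eq:gamma_v} and monotonicity of conditional expectation give $\skp{\beta_p, v} = \Exp_\leb[\skp{\gamma', v} \mid \Sigma_p] \ge c$. Dividing produces $\abs{g_p(t)} \le 1/c$ pointwise on $I$; since $\mu^v$ is a probability measure this immediately yields $\lnorm{{L^2(\mu^v)}}{g_p} \le 1/c$ for all $p$, so one may take $K(\gamma) = 1$. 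Measurability of $g_p$ with respect to $\Sigma_p$ is inherited from $\beta_p$.

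For (iii), set $Z := \int_I \skp{\gamma', v}\, d\leb$ and note that $\mu^v$ has Lebesgue density $\rho = \skp{\gamma', v}/Z$. Since $\rho$ is bounded below by $c/Z > 0$, the standard change-of-measure formula applies and reads
\begin{equation*}
\Exp_{\mu^v}[f \mid \Sigma_p] = \frac{\Exp_\leb[f \skp{\gamma', v} \mid \Sigma_p]}{\Exp_\leb[\skp{\gamma', v} \mid \Sigma_p]} = \frac{\Exp_\leb[f \skp{\gamma', v} \mid \Sigma_p]}{\skp{\beta_p, v}}
\end{equation*}
for any $\mu^v$-integrable $f$. Taking $f = g_{p+1}$ and using that $g_{p+1}$ is $\Sigma_{p+1}$-measurable, the tower property simplifies the numerator in successive steps:
\begin{equation*}
\Exp_\leb[g_{p+1} \skp{\gamma', v} \mid \Sigma_p] = \Exp_\leb[g_{p+1} \skp{\beta_{p+1}, v} \mid \Sigma_p] = \Exp_\leb[\skp{\beta_{p+1}, v^\perp} \mid \Sigma_p] = \skp{\beta_p, v^\perp}.
\end{equation*}
Here the first equality conditions on $\Sigma_{p+1}$ inside and replaces $\Exp_\leb[\skp{\gamma', v} \mid \Sigma_{p+1}]$ by $\skp{\beta_{p+1}, v}$, the second is the defining identity $g_{p+1}\skp{\beta_{p+1}, v} = \skp{\beta_{p+1}, v^\perp}$, and the last applies the tower property to $\beta_{p+1} = \Exp_\leb[\gamma' \mid \Sigma_{p+1}]$. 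Dividing by $\skp{\beta_p, v}$ returns $g_p$, establishing the martingale identity.

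I do not anticipate a real obstacle: the only point requiring care is to keep track of which measure the conditional expectations are with respect to. The change-of-measure identity above handles this, and the uniform positive lower bound $\skp{\beta_p, v} \ge c$ makes the division by $\skp{\beta_p, v}$ everywhere legitimate.
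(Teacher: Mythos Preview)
Your proof is correct and follows essentially the same route as the paper: both establish the pointwise bound $\abs{g_p}\leq 1/c$ from $\skp{\beta_p,v}\geq c$ and verify the martingale identity by pulling out the $\Sigma_{p+1}$-measurable factor $g_{p+1}$ and using the tower property, the only cosmetic difference being that the paper checks $\int_A g_{p+1}\,d\mu^v=\int_A g_p\,d\mu^v$ for $A\in\Sigma_p$ directly rather than invoking the change-of-measure formula for conditional expectations. Your observation that one may take $K(\gamma)=1$ is a small sharpening of the paper's stated bound.
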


\begin{proof}
	In what follows we will assume $\int_{I}\langle{\gamma',v}\rangle\,d\leb=1$, which simplifies the expression for the measure $\mu^{v}$. Accordingly all computations are correct up to \hl{multiplication} by a fixed constant $K(\gamma)$ depending only on $\gamma$. From elementary properties of the conditional expectation we get that \eqref{eq:gamma_v} implies 
	\begin{equation*}
	\langle{\beta_{p}(t),v}\rangle \geq c\qquad \text{for all $t\in I$.}
	\end{equation*}
	Hence the mappings $\frac{\langle{\beta_{p},v^{\perp}}\rangle}{\langle{\beta_{p},v}\rangle}$ are bounded, which trivially implies $\frac{\langle{\beta_{p},v^{\perp}}\rangle}{\langle{\beta_{p},v}\rangle}\in L^{1}(I,\Sigma_{p},\mu^{v})$ for every $p\geq 0$. Hence property \eqref{mart_adapted} of Definition~\ref{def:martingale} is satisfied. We turn now to property \eqref{mart_cond_exp}. Given $A\in \Sigma_{p}$ we have
	\begin{equation*}
	\int_{A}\frac{\langle{\beta_{p+1},v^{\perp}}\rangle}{\langle{\beta_{p+1},v}\rangle}\,d\mu^{v}=\int_{A}\frac{\langle{\beta_{p+1},v^{\perp}}\rangle}{\langle{\beta_{p+1},v}\rangle}\cdot \langle{\gamma',v}\rangle\,d\leb=\int_{A}\Exp\left[\frac{\langle{\beta_{p+1},v^{\perp}}\rangle}{\langle{\beta_{p+1},v}\rangle}\cdot \langle{\gamma',v}\rangle\,|\,\Sigma_{p+1}\right]\,d\leb.
	\end{equation*}
	Now we use a standard property of the conditional expectation (see \cite[22.(i), p. 54]{williams1979diffusions}) to deduce
	\begin{equation*}
	\Exp\left[\frac{\langle{\beta_{p+1},v^{\perp}}\rangle}{\langle{\beta_{p+1},v}\rangle}\cdot \langle{\gamma',v}\rangle\,|\,\Sigma_{p+1}\right]=\frac{\langle{\beta_{p+1},v^{\perp}}\rangle}{\langle{\beta_{p+1},v}\rangle}\cdot\Exp[\langle{\gamma',v}\rangle|\Sigma_{p+1}]=\langle{\beta_{p+1},v^{\perp}}\rangle,
	\end{equation*}
	and similarly
	\begin{equation*}
	\Exp\left[\frac{\langle{\beta_{p},v^{\perp}}\rangle}{\langle{\beta_{p},v}\rangle}\cdot \langle{\gamma',v}\rangle\,|\,\Sigma_{p}\right]=\langle{\beta_{p},v^{\perp}}\rangle.
	\end{equation*}
	Hence
	\begin{align*}
	\int_{A}\frac{\langle{\beta_{p+1},v^{\perp}}\rangle}{\langle{\beta_{p+1},v}\rangle}\,d\mu^{v}&=\int_{A}\langle{\beta_{p+1},v^{\perp}}\rangle\, d\leb=\int_{A}\langle{\gamma',v^{\perp}}\rangle\,d\leb=\int_{A}\langle{\beta_{p},v^{\perp}}\rangle\,d\leb\\
	&=\int_{A}\frac{\langle{\beta_{p},v^{\perp}}\rangle}{\langle{\beta_{p},v}\rangle}\cdot \langle{\gamma',v}\rangle\, d\leb=\int_{A}\frac{\langle{\beta_{p},v^{\perp}}\rangle}{\langle{\beta_{p},v}\rangle}\,d\mu^{v}.
	\end{align*}
	The bound on the $L^{2}(\mu^{v})$ norm follows trivially from a bound on the $L^{\infty}$ norm:
	\begin{equation*}
	\lnorm{\infty}{\frac{\langle{\beta_{p},v^{\perp}}\rangle}{\langle{\beta_{p},v}\rangle}}\leq\frac{1}{c}\quad\Rightarrow\quad\lnorm{{L^{2}(\mu^{v})}}{\frac{\langle{\beta_{p},v^{\perp}}\rangle}{\langle{\beta_{p},v}\rangle}}\leq \frac{K(\gamma)}{c}.
	\end{equation*}
\end{proof}
The proof of the next lemma can be given as an exercise; we include it in \hl{Appendix~\ref{app_mart}}.
\begin{restatable}{lemma}{lemmaaltsummart}\label{lemma:alt_sum_mart}
	Let $(\Omega,\mathcal{F},\mu)$ be a measure space, $(\mathcal{F}_{n})_{n=0}^{\infty}$ be a filtration on $\Omega$ and $(X_{n})_{n=0}^{\infty}$ be a martingale with respect to the filtration $(\mathcal{F}_{n})_{n=0}^{\infty}$ and measure $\mu$. Then the sequence of alternating sums
	\begin{equation*}
	\sum_{n=0}^{2N-1}(-1)^{n}X_{n},\qquad N\in\N,
	\end{equation*}
	is a martingale with respect to the filtration $(\mathcal{F}_{2N-1})_{N=1}^{\infty}$ and measure $\mu$ with 
	\begin{equation*}
	\lnorm{{L^{2}(\mu)}}{\sum_{n=0}^{2N-1}(-1)^{n}X_{n}}\leq 2 \sup_{n\geq 0}\lnorm{{L^{2}(\mu)}}{X_{n}}.
	\end{equation*}
\end{restatable}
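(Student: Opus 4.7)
The plan is to separate the claim into two parts: (1) establishing the martingale property of $(S_N)_{N\geq 1}$ where $S_N := \sum_{n=0}^{2N-1}(-1)^n X_n$, and (2) deriving the $L^2$ bound via orthogonality of martingale differences, which in fact yields the sharper constant $1$ rather than $2$.

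For the martingale property, I would first note that $S_N$ is a sum of $\mathcal{F}_{2N-1}$-measurable $L^1$ functions (each $X_n$ with $n\leq 2N-1$ lies in $L^1(\mathcal{F}_n,\mu)\subseteq L^1(\mathcal{F}_{2N-1},\mu)$), which yields condition \eqref{mart_adapted} of Definition~\ref{def:martingale}. The key computation is the telescoping identity $S_{N+1}-S_N = X_{2N}-X_{2N+1}$. Applying the martingale property of $(X_n)$ together with the tower property gives
\begin{align*}
\Exp[X_{2N}\,|\,\mathcal{F}_{2N-1}] &= X_{2N-1},\\
\Exp[X_{2N+1}\,|\,\mathcal{F}_{2N-1}] &= \Exp[\Exp[X_{2N+1}\,|\,\mathcal{F}_{2N}]\,|\,\mathcal{F}_{2N-1}] = \Exp[X_{2N}\,|\,\mathcal{F}_{2N-1}] = X_{2N-1},
\end{align*}
so $\Exp[S_{N+1}-S_N\,|\,\mathcal{F}_{2N-1}]=0$, which is condition \eqref{mart_cond_exp}.

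For the $L^2$ bound, the plan is to exploit orthogonality of martingale increments. Writing $D_k := S_{k+1}-S_k = X_{2k}-X_{2k+1}$ and using the just-established martingale property of $(S_N)$ with respect to $(\mathcal{F}_{2N-1})$, the increments $D_k$ are mutually orthogonal in $L^2(\mu)$, so $\|S_N\|_{L^2}^2 = \sum_{k=0}^{N-1}\|D_k\|_{L^2}^2$. For each single increment, the standard martingale identity $\int X_{2k}X_{2k+1}\,d\mu = \int X_{2k}\Exp[X_{2k+1}\,|\,\mathcal{F}_{2k}]\,d\mu = \|X_{2k}\|_{L^2}^2$ reduces the norm of a difference to a difference of norms:
\begin{equation*}
\|X_{2k+1}-X_{2k}\|_{L^2}^2 = \|X_{2k+1}\|_{L^2}^2 - \|X_{2k}\|_{L^2}^2.
\end{equation*}

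To finish, I would observe that the sequence $a_n := \|X_n\|_{L^2}^2$ is non-decreasing (again by the martingale property and Jensen's inequality). Hence each $a_{2k+1}-a_{2k}\geq 0$, and by dropping in the remaining non-negative increments $a_{2k}-a_{2k-1}$ the sum telescopes:
\begin{equation*}
\|S_N\|_{L^2}^2 = \sum_{k=0}^{N-1}(a_{2k+1}-a_{2k}) \leq \sum_{n=0}^{2N-1}(a_{n+1}-a_n) = a_{2N}-a_0 \leq \sup_{n\geq 0}\|X_n\|_{L^2}^2.
\end{equation*}
This yields $\|S_N\|_{L^2} \leq \sup_n\|X_n\|_{L^2}$, hence a fortiori the claimed bound with factor $2$. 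I do not anticipate any serious obstacle; the only care needed is the correct choice of conditioning $\sigma$-algebra $\mathcal{F}_{2N-1}$ (rather than $\mathcal{F}_{2N+1}$), which is what makes both $X_{2N}$ and $X_{2N+1}$ collapse to the same conditional expectation $X_{2N-1}$ and thereby forces the alternating sum to behave like a martingale despite the original sequence possibly being unbounded in $L^2$.
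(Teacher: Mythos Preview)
Your proof is correct and follows essentially the same route as the paper. For the martingale property, both you and the paper verify that $\Exp[X_{2N}-X_{2N+1}\mid\mathcal{F}_{2N-1}]=0$; you do this via the tower property, the paper via the integral characterisation. For the $L^{2}$ bound, the paper expands $\bigl\|\sum(-1)^{n}X_{n}\bigr\|_{L^{2}}^{2}$ directly using the identity $\langle X_{m},X_{n}\rangle=\|X_{m}\|_{L^{2}}^{2}$ for $m\leq n$ and simplifies to the same alternating sum $\sum_{m}(-1)^{m+1}\|X_{m}\|_{L^{2}}^{2}$ that your martingale-difference decomposition produces, then telescopes via monotonicity of $\|X_{n}\|_{L^{2}}^{2}$ exactly as you do; your observation that the constant $2$ can be replaced by $1$ is also implicit in the paper's computation.
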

Together Proposition~\ref{thm:mrtgl} and Lemma~\hl{\ref{lemma:alt_sum_mart}} admit the following corollary:
\begin{corollary}\label{cor:kolm}
	With the hypothesis of Proposition~\ref{thm:mrtgl} we have for every $\lambda>0$
	\begin{equation*}
	\leb\left(\left\{t\in I\colon \sup_{p\in\N}\abs{\sum_{q=0}^{2p-1}(-1)^{q}\frac{\langle{\beta_{q}(t),v^{\perp}}\rangle}{\langle{\beta_{q}(t),v}\rangle}}>\lambda\right\}\right)\leq \frac{16 K(\gamma)}{\lambda^{2}c^{3}}
	\end{equation*}	
\end{corollary}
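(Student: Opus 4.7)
The plan is a three-step reduction: convert to a statement about the alternating-sum martingale under $\mu^v$, apply Doob's maximal inequality, and then transfer the resulting measure bound from $\mu^v$ back to Lebesgue measure using the lower bound \eqref{eq:gamma_v}.

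First, I would invoke Proposition~\ref{thm:mrtgl} to conclude that the sequence $X_p(t) := \langle\beta_p(t),v^\perp\rangle/\langle\beta_p(t),v\rangle$ is a martingale with respect to $(\Sigma_p)_{p=0}^\infty$ and $\mu^v$, with $\sup_p \lnorm{L^2(\mu^v)}{X_p} \leq K(\gamma)/c$. Lemma~\ref{lemma:alt_sum_mart} then gives that the alternating partial sums
\begin{equation*}
Y_N := \sum_{q=0}^{2N-1}(-1)^q X_q, \qquad N \in \N,
\end{equation*}
form a martingale with respect to $(\Sigma_{2N-1})_{N=1}^\infty$ and $\mu^v$, with $\sup_N \lnorm{L^2(\mu^v)}{Y_N} \leq 2K(\gamma)/c$.

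Next, I would apply Doob's $L^2$-maximal inequality to the martingale $(Y_N)$: it yields $\lnorm{L^2(\mu^v)}{\sup_{N\leq M} |Y_N|} \leq 2\lnorm{L^2(\mu^v)}{Y_M}$ for every $M$, and letting $M \to \infty$ and applying Markov's inequality, we obtain
\begin{equation*}
\mu^v\!\left(\left\{t\in I \colon \sup_{N\in\N}\abs{Y_N(t)} > \lambda\right\}\right) \leq \frac{1}{\lambda^2}\sup_N \lnorm{L^2(\mu^v)}{2Y_N}^2 \leq \frac{16\,K(\gamma)^2}{c^2\lambda^2}.
\end{equation*}

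Finally, I would transfer this bound to Lebesgue measure. From \eqref{eq:gamma_v} we have $\langle\gamma'(t),v\rangle\geq c$, so writing $I_0 := \int_I \langle\gamma',v\rangle\,d\leb$ we get the Radon--Nikodym comparison $d\leb \leq (I_0/c)\,d\mu^v$ on $I$; equivalently $\leb(A) \leq (I_0/c)\mu^v(A)$ for every measurable $A\subseteq I$. Applying this to the sublevel set above yields the stated bound after absorbing the factor $I_0$ and the $K(\gamma)^2$ into a new constant $K(\gamma)$ depending only on $\gamma$. The only mildly technical point is bookkeeping of the constants: the factor $c^{-3}$ in the final estimate comes from $c^{-2}$ coming from $\sup_p\lnorm{L^2(\mu^v)}{X_p}^2$ via Proposition~\ref{thm:mrtgl} together with the additional factor $c^{-1}$ introduced by the change of measure from $\mu^v$ to $\leb$.
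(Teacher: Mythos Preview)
Your proof is correct and follows essentially the same route as the paper: combine Proposition~\ref{thm:mrtgl} with Lemma~\ref{lemma:alt_sum_mart} to obtain an $L^{2}(\mu^{v})$-bounded martingale of alternating sums, apply Doob's $L^{2}$ maximal inequality together with Chebyshev to bound the $\mu^{v}$-measure of the exceptional set, and then pass to Lebesgue measure via the comparison $\leb\leq (I_{0}/c)\,\mu^{v}$. The only difference is cosmetic bookkeeping of the constant $K(\gamma)$, which you handle correctly by noting $I_{0}\leq |I|$ depends only on $\gamma$.
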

\begin{proof}
	By combining Proposition~\ref{thm:mrtgl} and Lemma~\ref{lemma:alt_sum_mart} we deduce that the sequence of functions
	\begin{equation*}
	\alpha_{p}^{v}\colon I\to\R,\qquad t\mapsto\sum_{q=0}^{2p-1}(-1)^{q}\frac{\langle{\beta_{q}(t),v^{\perp}}\rangle}{\langle{\beta_{q}(t),v}\rangle},\qquad p\geq 1,
	\end{equation*}
	is a martingale with respect to the filtration $(\Sigma_{2p-1})_{p=1}^{\infty}$ and measure $\mu^{v}$ with
	\begin{equation*}
	\left\|\alpha_{p}^{v}\right\|_{L^{2}(\mu^{v})}\leq \frac{2}{c}\qquad \text{for all $p\geq 1$.}
	\end{equation*}
	Now, making use of Doob's $L^{2}$ inequality \cite[p.~60]{williams1979diffusions}, we derive
	\begin{multline*}
	\lambda^{2}\mu^{v}\left(\left\{t\in I\colon \sup_{p\in\N}\abs{\sum_{q=0}^{2p-1}(-1)^{q}\frac{\langle{\beta_{q}(t),v^{\perp}}\rangle}{\langle{\beta_{q}(t),v}\rangle}}>\lambda\right\}\right)\\\leq \lnorm{{L^{2}(\mu^{v})}}{\sup_{p\in\N}\abs{\alpha_{p}^{v}}}^{2}\leq 2^{2}\sup_{q\in \N}\lnorm{{L^{2}(\mu^{v})}}{\alpha_{p}^{v}}^{2}\leq \frac{16}{c^{2}},
	\end{multline*}
	after which a simple rearrangement and application of the inequality $\leb\leq \frac{K(\gamma)}{c}\mu^{v}$ verifies the corollary.

\end{proof}

\begin{lemma}\label{lemma:preimage_H_zero}
	Let $\gamma\colon I\to\R^{2}$ be a $\C^{1}$ curve with
	\begin{equation*}
	\gamma'(t)\in C(w,2\eta)\qquad\text{for all $t\in I$.}
	\end{equation*}
	Then $\leb(\gamma^{-1}(H\setminus G))=0$.
\end{lemma}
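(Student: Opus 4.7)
The plan is to model the Lebesgue measure of $\gamma^{-1}(H\setminus G)$ by using the martingale machinery of Proposition~\ref{thm:mrtgl}. I will assume $\eta$ is chosen small enough that the cone condition $\gamma'(t)\in C(w,2\eta)$ places $\gamma$ under Hypothesis~\ref{hyp} with $\delta:=2\eta$; this unlocks Lemmas~\ref{prop:Dp} and~\ref{lemma:outsideDp}. By Lemma~\ref{prop:Dp} the set $D=\bigcap_n\bigcup_{p\geq n}D_p$ is Lebesgue null, so it suffices to show that $\leb\bigl(\gamma^{-1}(H\setminus G)\setminus D\bigr)=0$.

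The central step is to transfer the divergence statement of Lemma~\ref{lemma:der_grow} into a divergence statement about the martingale of Proposition~\ref{thm:mrtgl}. Fix $t\in \gamma^{-1}(E)\setminus D$ and let $N=N(t)$ be such that $t\notin D_p$ for every $p\geq N$. Lemma~\ref{lemma:outsideDp} then gives
\begin{equation*}
\Bigl|\frac{\langle w,e_{k_p}^{\perp}\rangle}{\langle w,e_{k_p}\rangle}-\frac{\langle \beta_p(t),w^{\perp}\rangle}{\langle \beta_p(t),w\rangle}\Bigr|\leq \frac{2^{-p}}{(1-\eta)(1-2\eta)}
\end{equation*}
for all $p\geq N$. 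Moreover each of the two ratios is bounded in absolute value by a constant depending only on $\eta$: the first because $e_{k_p}\in C(w,\eta)$, and the second because $\beta_p(t)=\Exp[\gamma'|\Sigma_p](t)$ inherits $\langle \beta_p(t),w\rangle\geq 1-2\eta$ from $\gamma'\in C(w,2\eta)$. Setting
\begin{equation*}
A_q(t):=\sum_{s=0}^{2q-1}(-1)^s\frac{\langle w,e_{k_s}^{\perp}\rangle}{\langle w,e_{k_s}\rangle},\qquad B_q(t):=\sum_{s=0}^{2q-1}(-1)^s\frac{\langle \beta_s(t),w^{\perp}\rangle}{\langle \beta_s(t),w\rangle},
\end{equation*}
these two observations combine to yield $\sup_{q}|A_q(t)-B_q(t)|<\infty$: the summable tail from the $2^{-p}$ estimate plus the finitely many pre-$N(t)$ terms (each uniformly bounded by the cone control) contribute only a finite, $t$-dependent constant. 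Lemma~\ref{lemma:der_grow} gives $\sup_q|A_q(t)|=\infty$ for $t\in \gamma^{-1}(H\setminus G)$, and hence for every such $t$ outside $D$ we obtain $\sup_q|B_q(t)|=\infty$ as well.

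Finally I would apply Corollary~\ref{cor:kolm} with $v=w$ and $c=1-2\eta$ (the lower bound on $\langle \gamma',w\rangle$ furnished by $\gamma'\in C(w,2\eta)$). For every $\lambda>0$ this bounds
\begin{equation*}
\leb\bigl(\{t\in I:\sup_{q\in\N}|B_q(t)|>\lambda\}\bigr)\leq \frac{16K(\gamma)}{\lambda^2(1-2\eta)^3},
\end{equation*}
and letting $\lambda\to\infty$ collapses the set where the supremum is infinite to measure zero, completing the argument. The main delicate point I anticipate is keeping the bookkeeping of the per-index bound clean: Lemma~\ref{lemma:outsideDp} only applies once $p\geq N(t)$, so one must absorb the finitely many earlier terms into a bounded, $t$-dependent constant using the cone conditions on $e_{k_p}$ and $\beta_p$. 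Once that transfer is in place, the martingale tail inequality furnishes the conclusion essentially for free.
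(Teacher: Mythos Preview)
Your proposal is correct and follows essentially the same route as the paper's own proof: reduce modulo the null set $D$ via Lemma~\ref{prop:Dp}, transfer the divergence of Lemma~\ref{lemma:der_grow} to the martingale sums $B_{q}$ using Lemma~\ref{lemma:outsideDp}, and conclude with Corollary~\ref{cor:kolm} applied with $v=w$, $c=1-2\eta$. You are in fact more explicit than the paper about the bookkeeping for indices $p<N(t)$ (the paper simply writes ``together with Lemma~\ref{lemma:outsideDp} and $t\notin D$ implies\ldots''), and your justification via the cone bounds on $e_{k_{p}}$ and $\beta_{p}$ is exactly what makes that step work.
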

\begin{proof}
	At this point we \hl{prescribe} that $\eta$ is sufficiently small so that the conditions of Hypothesis~\ref{hyp} are satisfied for $\delta=2\eta$ and the conditions of Proposition~\ref{thm:mrtgl} are satisfied for $c=1-2\eta$ and $v=w$. Let the filtration $(\Sigma_{p})_{p=0}^{\infty}$, the conditional expectations $\beta_{p}:=\Exp[\gamma'|\Sigma_{p}]$ and the set $D\subseteq I$ be defined according to Hypothesis~\ref{hyp}. In view of Lemma~\ref{prop:Dp} it suffices to show that the set
	\begin{equation*}
	Z:=\gamma^{-1}(H\setminus G)\setminus D
	\end{equation*}
	has Lebesgue measure zero. Let $t\in Z$. 
	Applying Lemma~\ref{lemma:der_grow} with $z=\gamma(t)\in H\setminus G$ we get, writing $k_{s}$ for $k_{s}(\gamma(t))$,
	\begin{equation*}
	\sup_{q\in\N}\left|\sum_{s=0}^{ 2q-1}(-1)^{s}\frac{\langle{w,e_{k_{s}}^{\perp}}\rangle}{\langle{w,e_{k_{s}}}\rangle}\right|=\infty,
	\end{equation*}
	which together with Lemma~\ref{lemma:outsideDp} and $t\notin D$ implies 
	\begin{equation*}
	\sup_{q\in\N}\left|\sum_{s=0}^{2q-1}(-1)^{s}\frac{\langle{\beta_{s}(t),w^{\perp}}
		\rangle}{\langle{\beta_{s}(t),w}\rangle}\right|=\infty.
	\end{equation*}
	To summarise, we have shown that
	\begin{equation*}
	Z\subseteq \left\{t\in I\colon \sup_{q\in\N}\left|\sum_{s=0}^{2q-1}(-1)^{s}\frac{\langle{\beta_{s}(t),w^{\perp}}
		\rangle}{\langle{\beta_{s}(t),w}\rangle}\right|=\infty\right\},
	\end{equation*}
	and the latter set has measure zero by Corollary~\ref{cor:kolm}.
\end{proof}
The following statement is the final piece in the proof of Theorem~\ref{thm:main_result}.
\begin{lemma}\label{lemma:final_piece}
\hl{The set $G\cup H$ is purely unrectifiable.}
\end{lemma}
\begin{proof}

	Both $G$ and $H$ are $G_{\delta}$ sets, hence $G\cup H$ is Borel. Let $\gamma\colon [0,1]\to\R^{2}$ be a $\mathcal{C}^{1}$ curve. To complete the proof we verify that the set $\gamma^{-1}(G\cup H)$ has Lebesgue measure zero. We may cover $[0,1]$ by countably many intervals $I$ so that each restriction $\gamma\colon I\to\R^{2}$ satisfies (possibly with orientation reversed) either
	\begin{equation*}
	\gamma'(t)\notin \widehat{C}(w,\eta)\quad\text{ for all $t\in I$},\qquad\text{or, }\gamma'(t)\in C(w,2\eta)\quad\text{ for all $t\in I.$}
	\end{equation*}
	It now suffices to argue that each such restriction of $\gamma$ intersects $G\cup H$ in a set of measure zero. The curves for which the first condition holds intersect $E\supset G\cup H$ in a set of measure zero, by Lemma~\ref{lemma:easy_curves}. The curves of the second type intersect $H\setminus G$ in a set of measure zero, by Lemma~\ref{lemma:preimage_H_zero} and $G$ in a set of measure zero by Lemma~\ref{lemma:G_zero}.
\end{proof}

\hl{\begin{proof}[Proof of Theorem~\ref{thm:main_result}]
	 Let $E$ be the universal differentiabilty set given by \eqref{eq:set_E} and the construction that follows. We take the parameter $\eta\in (0,1)$ of \eqref{E1} to be sufficiently small so that $3\sqrt{\eta}<\alpha$. Let $h\colon\R^{2}\to\R$ be the Lipschitz function corresponding to $E$ constructed in Section~\ref{subsec:function}; see \eqref{eq:function_h}. Then Lemma~\ref{lemma:final_piece} and Lemma~\ref{lemma:cover_diff} together establish that $h$ verifies the conclusion of Theorem~\ref{thm:main_result}.
	\end{proof}}

\subsection{Proof of Theorem~\ref{thm:pu_tds}}
Referring to the above construction, we present a proof of Theorem~\ref{thm:pu_tds}. 
\begin{proof}[Proof of Theorem~\ref{thm:pu_tds}]
	We begin with the universal differentiability set $E$ \hl{of \eqref{eq:set_E}} and perform the following trimmings. First we appeal to Lemma~\ref{lemma:compact_uds} to replace $E$ with a compact universal differentiability set $\hl{Y}\subseteq E\cap[0,1]^{2}$. Next, we remove the set $G$ and argue that $G$ is sufficiently negligible so that we again retain a universal differentiability set. This is justified by \cite[Lemma~2.1]{dymond_maleva2016} and the fact, of Lemma~\ref{lemma:G_zero}, that $G$ projects in any direction to a set of $1$-dimensional Lebesgue measure zero. Thus, in the end, we are left with a universal differentiability set
	\begin{equation*}
	\hl{\widetilde{Y}}:=\hl{Y}\setminus G\subseteq [0,1]^{2}.
	\end{equation*}
	In light of Lemmas~\ref{lemma:cover_diff} and \ref{lemma:h_diff} we have that $h$ is non-differentiable at all points of $\hl{\widetilde{Y}}\setminus H\subseteq E\setminus(G\cup H)$ and differentiable at all points of $\hl{\widetilde{Y}}\cap H\subseteq H\setminus G$. We verify that the set $P:=\hl{\widetilde{Y}}\cap H$ has the properties asserted in Theorem~\ref{thm:pu_tds}.
	
	\hl{First, note that $P$ is purely unrectifiable, due to Lemma~\ref{lemma:final_piece}. It remains to show that typical functions $g\in\lip_{1}([0,1]^{2})$ have large sets of differentiability points in $P$ in the senses of Theorem~\ref{thm:pu_tds}~\eqref{non_sig_fin} and \eqref{all_proj}.} Observe that $\hl{\widetilde{Y}}\setminus H\subseteq \hl{Y}\setminus (G\cup H)=\hl{Y}\cap\bigcup_{m=1}^{\infty}F_{m}$. Since $\hl{Y}$ is compact and each $F_{m}$ is $F_{\sigma}$, the sets $\hl{Y}\cap F_{m}$ are $F_{\sigma}$. Moreover, for each $m$\hl{,} Lemma~\ref{lemma:cover_diff} ensures the conditions of Lemma~\ref{lemma:Fsig_unif_ndiff} are satisfied for $K=\hl{Y}\cap F_{m}$, $\sigma=\frac{2^{-m}\sqrt{\eta}}{4}$ and $h$. Intersecting the residual subsets of $\lip_{1}([0,1]^{2})$ obtained by applying Lemma~\ref{lemma:Fsig_unif_ndiff} to each $\hl{Y}\cap F_{m}$, we obtain a residual set in which all functions $g$ have the property that $g+h$ is nowhere differentiable in $\hl{Y}\cap \bigcup_{m=1}^{\infty}F_{m}\supseteq \hl{\widetilde{Y}}\setminus H$. Since $\hl{\widetilde{Y}}$ is a universal differentiability set, it follows that 
	\begin{equation*}
	\emptyset\neq \Diff(g+h)\cap \hl{\widetilde{Y}}\subseteq \hl{\widetilde{Y}}\cap H=P
	\end{equation*}
	for typical $g\in\lip_{1}([0,1]^{2})$. But $h$ is differentiable at all points of $P=\hl{\widetilde{Y}}\cap H$, so we conclude that 
	\begin{equation*}
	\Diff(g)\cap P\supseteq \Diff(g+h)\cap \hl{\widetilde{Y}}
	\end{equation*}
	for typical $g\in \lip_{1}([0,1]^{2})$. The latter sets \hl{are Borel (see~\cite[Corollary~3.5.5]{LPT2012frechet}), purely unrectifiable and} have all one-dimensional projections of positive measure by \cite[Lemma~2.1]{dymond_maleva2016}. \hl{Therefore, by} the Besicovitch-Federer Projection Theorem~\hl{\cite[Theorem~18.1]{mattila_1995}}, they must also be of non-$\sigma$-finite one-dimensional Hausdorff measure.
\end{proof}

\paragraph{Acknowledgements.} The author would like to thank Olga Maleva and David Preiss for helpful discussions. The research presented in this paper was supported in part by short research visits at the University of Birmingham and the author wishes to thank the School of Mathematics for their hospitality. The author acknowledges the support of Austrian Science Fund (FWF): P 30902-N35.

\appendix
\section{Appendix}
\subsection{Geometry of curves}\label{app_geom_curves}
For $W\subseteq\R^{2}$ and $v\in S^{1}$ we define a quantity
\begin{equation*}
\diam_{v}(W):=\sup\left\{\langle{y-x,v}\rangle\colon x,y\in W\right\}.
\end{equation*}
\begin{lemma}\label{lemma:crossing}
	Let $W\subseteq \R^{2}$, $v\in S^{1}$, $\delta\in(0,1)$ and $\gamma\colon I\to \R^{2}$ be a $\C^{1}$~curve satisfying
	\begin{equation*}
	\gamma'(t)\in \widehat{C}(v,\delta)\quad \text{for all $t\in I$.}
	\end{equation*}
	Then 
	\begin{equation*}
	\leb(\gamma^{-1}(W))\leq \frac{\diam_{v}(W)}{1-\delta}.
	\end{equation*}	
\end{lemma}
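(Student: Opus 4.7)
The plan is to reduce this two-dimensional problem to a one-dimensional one by projecting $\gamma$ onto the $v$-direction and exploiting the resulting bi-Lipschitz behaviour. The hypothesis $\gamma'(t)\in\widehat{C}(v,\delta)$ translates exactly to $|\langle\gamma'(t),v\rangle|\geq 1-\delta$, so the derivative of the projection $t\mapsto\langle\gamma(t),v\rangle$ is uniformly bounded away from zero.

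First I would note that, since $\langle\gamma'(\cdot),v\rangle$ is continuous on the interval $I$ and never vanishes, it must have constant sign. Moreover $\widehat{C}(v,\delta)=\widehat{C}(-v,\delta)$ and $\diam_{v}(W)=\diam_{-v}(W)$, so by replacing $v$ with $-v$ if necessary I may assume $\langle\gamma'(t),v\rangle\geq 1-\delta$ for every $t\in I$. Introducing $\phi(t):=\langle\gamma(t),v\rangle$, the fundamental theorem of calculus then yields
\begin{equation*}
\phi(t)-\phi(s)=\int_{s}^{t}\langle\gamma'(u),v\rangle\,du\geq(1-\delta)(t-s)
\end{equation*}
for all $s\leq t$ in $I$; that is, $\phi$ is strictly increasing and expands distances by a factor of at least $1-\delta$.

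The conclusion follows immediately. For any two points $s,t\in\gamma^{-1}(W)$ the images $\gamma(s),\gamma(t)$ both lie in $W$, so
\begin{equation*}
(1-\delta)|t-s|\leq|\phi(t)-\phi(s)|=|\langle\gamma(t)-\gamma(s),v\rangle|\leq\diam_{v}(W).
\end{equation*}
Consequently $\gamma^{-1}(W)$ is contained in an interval of length at most $\diam_{v}(W)/(1-\delta)$, which bounds its (outer) Lebesgue measure by the same quantity. I do not anticipate any genuine obstacle in carrying this out: the argument is essentially a clean change-of-variables estimate, the only mildly delicate point being the sign-constancy of $\langle\gamma',v\rangle$, which is a direct consequence of continuity on the connected domain $I$.
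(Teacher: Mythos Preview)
Your proof is correct and follows essentially the same approach as the paper: reduce to the case $\langle\gamma'(t),v\rangle\geq 1-\delta$ by continuity, observe that the projection $\phi(t)=\langle\gamma(t),v\rangle$ expands distances by at least $1-\delta$, and conclude that $\gamma^{-1}(W)$ lies in an interval of length at most $\diam_{v}(W)/(1-\delta)$. The paper phrases the last step by picking explicit endpoints $a,b$ realising the extreme values of $\phi$ on $\gamma^{-1}(\overline{W})$ and integrating, but this is the same computation you carry out directly.
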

\begin{proof}
	Since $\gamma'$ is continuous, we either have $\gamma'(t)\in C(v,\delta)$ for all $t\in I$ or $\gamma'(t)\in C(-v,\delta)$ for all $t\in I$. We assume the former without loss of generality.
	Then the function $t\mapsto \langle{\gamma(t),v}\rangle$ is strictly increasing, implying that the set $\gamma^{-1}(W)$ is contained in the interval $[a,b]$, where $a,b\in \gamma^{-1}(\overline{W})$ are defined by the conditions
	\begin{align*}
	\langle{\gamma(a),v}\rangle=\min\left\{\langle{\gamma(t),v}\rangle\colon t\in\gamma^{-1}(\overline{W})\right\},\quad
	\langle{\gamma(b),v}\rangle=\max\left\{\langle{\gamma(t),v}\rangle\colon t\in\gamma^{-1}(\overline{W})\right\}.
	\end{align*}
	Now we have
	\begin{equation*}
	\leb(\gamma^{-1}(W))\leq b-a\leq \frac{1}{1-\delta}\int_{a}^{b}\langle{\gamma'(t),v}\rangle\,dt=\frac{\langle{\gamma(b)-\gamma(a),v}\rangle}{1-\delta}\leq \frac{\diam_{v}(W)}{1-\delta}.
	\end{equation*}
\end{proof}

\begin{lemma}\label{lemma:convex_curve}
	Let $P\subseteq \R^{2}$ be an open and convex set, $e\in S^{1}$ be a direction and $\gamma\colon I\to\R^{2}$ be a $\C^{1}$ curve with $\langle{\gamma'(t),e}\rangle\geq 0$ for all $t\in I$. Then
	\begin{equation*}
	\left|\int_{\gamma^{-1}(P)}\langle{\gamma'(t),e^{\perp}}\rangle\,dt\right|\leq 6\diam_{e^{\perp}}(P).
	\end{equation*}
\end{lemma}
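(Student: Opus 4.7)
The plan is to introduce coordinates $(x,y)=(\langle\cdot,e\rangle,\langle\cdot,e^{\perp}\rangle)$ and write $\gamma(t)=(x(t),y(t))$. By translating $P$ (which does not affect the integral) we may assume the $y$-range of $P$ lies in $[0,D]$, where $D:=\diam_{e^{\perp}}(P)$. Under the hypothesis $x'(t)\geq 0$, the map $x$ is monotone non-decreasing, and the set $\gamma^{-1}(P)$ is an open subset of $I$ decomposing as a countable disjoint union $\bigcup_{j}(a_{j},b_{j})$. The integral in question equals $\sum_{j}(y(b_{j})-y(a_{j}))$, a series that converges absolutely since $|y'|\le 1$ and $\sum_j\int_{a_j}^{b_j}|y'|\le\int_I |y'|<\infty$. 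By replacing $\gamma$ with $\gamma_{\varepsilon}(t)=\gamma(t)+\varepsilon t\, e$ and letting $\varepsilon\to 0$ (dominated convergence, using boundedness of $y'$), we may further assume $x$ is strictly increasing, so the description of excursions below is unambiguous.

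Using convexity, parameterise $P$ as $\{(x,y):y_{\min}(x)<y<y_{\max}(x),\ x\in(L,R)\}$, where $y_{\min}$ is convex and $y_{\max}$ is concave on $[L,R]$, both taking values in $[0,D]$. Each endpoint $\gamma(a_j),\gamma(b_j)$ that is interior to $I$ lies on $\partial P$, hence its $y$-coordinate equals either $y_{\max}$ or $y_{\min}$ of its $x$-coordinate; call the endpoint "top" or "bottom" accordingly. The key geometric claim is that on each gap $(b_{j},a_{j+1})$ between consecutive excursions, $\gamma$ stays strictly on one side of $P$: since $x(t)\in[L,R]$ throughout and $y_{\min}<y_{\max}$ there, continuity of $\gamma$ forbids it from crossing through $P$ to switch sides. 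It follows that $\gamma(b_{j})$ and $\gamma(a_{j+1})$ always share a common side label $s_{j}\in\{\min,\max\}$.

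With side-matching in hand, apply the rearrangement
\[
\sum_{j=1}^{N}(y(b_{j})-y(a_{j}))=-y(a_{1})+y(b_{N})+\sum_{j=1}^{N-1}\bigl(y_{s_{j}}(x(b_{j}))-y_{s_{j}}(x(a_{j+1}))\bigr),
\]
with the same formula extended to $N=\infty$ via absolute convergence. The middle sum splits into two sub-sums over $\{j:s_{j}=\max\}$ and $\{j:s_{j}=\min\}$, each being a signed sum of differences of $y_{\max}$, respectively $y_{\min}$, over the disjoint ordered $x$-intervals $[x(b_{j}),x(a_{j+1})]$. Hence their absolute values are bounded by $\operatorname{TV}(y_{\max})$ and $\operatorname{TV}(y_{\min})$ respectively; since a convex or concave function on $[L,R]$ with values in $[0,D]$ has total variation at most $2D$, the middle contribution is at most $4D$. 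The endpoint terms contribute at most $|y(a_{1})|+|y(b_{N})|\le 2D$, giving $|S|\le 6D$ as required.

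The main obstacle to making this rigorous is the side-preservation claim, particularly at the degenerate $x$-values $L,R$ where $y_{\min}=y_{\max}$ and the "top/bottom" label breaks down; I would handle this by treating any excursion endpoint whose $x$-coordinate equals $L$ or $R$ as an endpoint-of-$I$ case, absorbing its bounded contribution into the $\pm y(a_{1}), y(b_{N})$ terms. The approximation argument justifying the reduction to strictly increasing $x$ requires checking that $\chi_{\gamma_{\varepsilon}^{-1}(P)}\to\chi_{\gamma^{-1}(P)}$ pointwise almost everywhere, which follows from $P$ being open and its boundary being meagre relative to the monotone motion of $\gamma$ in the $e$-direction.
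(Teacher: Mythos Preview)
Your approach is correct and essentially the same as the paper's: both parameterise $\partial P$ by a convex lower function and a concave upper function, telescope the sum $\sum_j(y(b_j)-y(a_j))$, and bound the cross terms by the total variation of these boundary functions (at most $2D$ each), yielding $6D$.

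The difference is purely in how you obtain the ``same side'' matching between consecutive excursion endpoints. You pass to strictly increasing $x$ via an approximation $\gamma_\varepsilon=\gamma+\varepsilon t\,e$ and then argue directly that on each gap the curve is trapped on one side. The paper instead works with the original curve but truncates to finitely many excursions, and when consecutive retained endpoints have different types it \emph{inserts} an intermediate excursion from the discarded tail, appealing to an intermediate-value observation to find it. Your route is arguably cleaner once the reduction is in place; the paper's avoids any limiting argument.

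One caution: your dominated-convergence step asserts $\chi_{\gamma_\varepsilon^{-1}(P)}\to\chi_{\gamma^{-1}(P)}$ a.e., which amounts to $\gamma^{-1}(\partial P)$ being Lebesgue null. This is not obvious in general (the curve could run along $\partial P$). In fact you can dispense with the approximation entirely: even with $x'\geq 0$ only, if $(b_j,a_{j+1})$ is a gap between \emph{all} excursions (not a truncated list) then $\gamma$ stays outside $P$ there, and since $x$ is monotone with $x(b_j),x(a_{j+1})\in(L,R)$ one still has $x(t)\in(L,R)$ throughout, so the connectedness argument gives side-preservation directly. This removes the one soft spot in your write-up.
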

\begin{proof}
	Let $a:=\inf\left\{\langle{z,e}\rangle\colon z\in P\right\}$ and $b:=\sup\left\{\langle{z,e}\rangle \colon z\in P\right\}$. As a convex and open set, $P$ admits functions $\psi^{-},\psi^{+}\colon (a,b)\to \R$ with $\psi^{-}$ convex and $\psi^{+}$ concave, $\psi^{-}<\psi^{+}$ and so that $\partial P\cap\left\{z\in\R^{d}\colon a<\langle{z,e}\rangle<b\right\}$ is the union of the graphs of $\psi^{-}$ and $\psi^{+}$ in the co-ordinate system $(e,e^{\perp})$. For points $z\in\R^{2}$ with $a<\langle{z,e}\rangle<b$ and $\psi\in\set{\psi^{+},\psi^{-}}$ we will let, for example, $z\geq \psi$ signify that, with respect to the coordinate system $(e,e^{\perp})$, the point $z$ lies on or above the graph of $\psi\colon (a,b)\to\R$. With this notation we have
	\begin{equation*}
	P\cap\left\{z\in\R^{d}\colon a<\langle{z,e}\rangle<b\right\}=\set{z\colon \psi^{-}<z<\psi^{+}}.
	\end{equation*}
	The condition $\langle{\gamma'(t),e}\rangle\geq 0$ guarantees that the segment $\gamma(I)\cap \set{z\in \R^{2}\colon a\leq \langle{z,e}\rangle\leq b}$ is connected. This leads to the simple observation that whenever $s<t$ with $\gamma(s)\geq \psi$ and $\gamma(t)<\psi$ there must be a point $r\in[s,t]$ with $\gamma(r)\in\Graph(\psi)$. We make frequent use of this observation in the argument that follows.

	The open set $\gamma^{-1}(P)$ can be written as a countable union of intervals $(a_{2i-1},a_{2i})\subseteq I$, $i=1,2,\ldots$ with $\gamma(a_{j})\in\partial P$ for all $j$. We choose $N\in\N$ sufficiently large so that
	\begin{equation*}
	\sum_{i\geq N+1}(a_{2i}-a_{2i-1})\leq \diam_{e^{\perp}}(P).
	\end{equation*} 
	By relabelling if necessary, we may assume that $a_{1}<a_{2}\leq a_{3}<a_{4}\leq \ldots \leq a_{2N-1}<a_{2N}$. In what follows we say that the point $a_{j}$ is of type $+$, respectively of type $-$, if $\gamma(a_{j})\in \Graph{\psi^{+}}$, respectively if $\gamma(a_{j})\in \Graph{\psi^{-}}$. We argue that the finite sequence $((a_{2j-1},a_{2j}))_{j=1}^{N}$ may be extended to a finite sequence of \hl{connected} components of $\gamma^{-1}(P)$ ordered with respect to $\leq$ in which the points $a_{2j+1}$ and $a_{2j}$ have the same type for every $j$. Let $j\in\left\{1,\ldots,N-1\right\}$ be an index for which $a_{2j}$ and $a_{2j+1}$ have different types. Without loss of generality, we may assume that $\gamma(a_{2j})\in\Graph{\psi^{+}}$ and $\gamma(a_{2j+1})\in\Graph{\psi^{-}}$. Then the interval \hl{$[a_{2j},a_{2j+1}]$} must contain a \hl{connected} component $(a_{2k-1},a_{2k})$ of $\gamma^{-1}(P)$ with $k\geq N+1$, $a_{2k-1}\in \Graph{\psi^{+}}$ and $a_{2k}\in \Graph{\psi^{-}}$. For example, the points $a_{2k-1}$ and $a_{2k}$ may be defined by
	\begin{align*}
	a_{2k}&:=\inf\left\{t>a_{2j}\colon \gamma(t)\leq \psi^{-}\right\}\leq a_{2j+1},\qquad
	a_{2k-1}:=\sup\left\{t<a_{2k}\colon \gamma(t)\geq \psi^{+}\right\}\geq a_{2j}.
	\end{align*}
	For each index $j\in\left\{1,\ldots,N-1\right\}$ for which $a_{2j}$ and $a_{2j+1}$ have different types, we let $k_{j}\geq N+1$ be the index defined by the above discussion. By inserting the intervals $(a_{2k_{j}-1},a_{2k_{j}})$ in between the relevant terms of the original sequence $(a_{2i-1},a_{2i})_{i=1}^{N}$ and relabelling the \hl{connected} components of $\gamma^{-1}(P)$ we obtain an extended finite sequence $((a_{2i-1},a_{2i}))_{i=1}^{M}$ of \hl{connected} components of $\gamma^{-1}(P)$ ordered with respect to $\leq$ with the desired property. 
	
	Thus, for each $i\in\left\{1,\ldots,M-1\right\}$ either $\gamma(a_{2i})$ and $\gamma(a_{2i+1})$ both lie on the graph of $\psi^{+}$ or they both lie on the graph of $\psi^{-}$. The sum of the quantities
	\begin{equation*}
	\langle{\gamma(a_{2i+1})-\gamma(a_{2i}),e^{\perp}}\rangle=\psi^{-}(\langle{\gamma(a_{2i+1}),e}\rangle)-\psi^{-}(\langle{\gamma(a_{2i}),e}\rangle)
	\end{equation*}
	over $1\leq i\leq M$ for which the first case occurs is bounded above by $2\diam_{e^{\perp}}(P)$, because $\psi^{-}$ is convex and oscillates at most $\diam\left\{\langle{z,e^{\perp}}\rangle\colon z\in P\right\}=\diam_{e^{\perp}}(P)$. The same estimate holds for the corresponding sum over the second case indices. This gives us
	\begin{multline*}
	\left|\int_{\gamma^{-1}(P)}\langle{\gamma'(t),e^{\perp}}\rangle\,dt\right|\leq\left|\sum_{i=1}^{M}\langle{\gamma(a_{2i})-\gamma(a_{2i-1}),e^{\perp}}\rangle\right|+\sum_{i\geq M+1}(a_{2i}-a_{2i-1})\\\leq \left|\langle{\gamma(a_{2M})-\gamma(a_{1}),e^{\perp}}\rangle\right|+\left|\sum_{i=1}^{M-1}\langle{\gamma(a_{2i+1})-\gamma(a_{2i}),e^{\perp}}\rangle\right|+\diam_{e^{\perp}}(P)\\
	\leq \diam_{e^{\perp}}(P)+4\diam_{e^{\perp}}(P)+\diam_{e^{\perp}}(P)=6\diam_{e^{\perp}}(P).
	\end{multline*}
\end{proof}

\subsection{Martingale Theory}\label{app_mart}

\lemmaaltsummart*
\begin{proof}
	For an arbitrary set $A\in \mathcal{F}_{2N-1}$ we have
	\begin{align*}
	\int_{A}\sum_{n=0}^{2N+1}(-1)^{n}X_{n}\,d\mu&=\int_{A}\sum_{n=0}^{2N-1}(-1)^{n}X_{n}\,d\mu+\int_{A}X_{2N}\,d\mu-\int_{A}X_{2N+1}\,d\mu\\
	&=\int_{A}\sum_{n=0}^{2N-1}(-1)^{n}X_{n}\,d\mu
	\end{align*}
	This proves 
	\begin{equation*}
	\mathbb{E}\left[\sum_{n=0}^{2N+1}(-1)^{n}X_{n}|\mathcal{F}_{2N-1}\right]=\sum_{n=0}^{2N-1}(-1)^{n}X_{n},\qquad N\in\N.
	\end{equation*}
	This establishes the martingale part. To get the bound on the $L^{2}$ norm we note that for $n\geq m$ we have
	\begin{equation*}
	\langle{X_{m},X_{n}}\rangle=\int_{\Omega}X_{m}X_{n}\,d\mu=\int_{\Omega}\mathbb{E}[X_{m}X_{n}|\mathcal{F}_{m}]\,d\mu=\int_{\Omega}X_{m}\mathbb{E}[X_{n}|\mathcal{F}_{m}]\,d\mu=\int_{\Omega}X_{m}^{2}\,d\mu,
	\end{equation*}
	where $\langle{-,-}\rangle$ denotes the standard inner product on $L^{2}(\Omega,\mathcal{F},\mu)$. The third equality above makes use of a standard property of the conditional expectation~\cite[22.(i), p~54]{williams1979diffusions}. We may now compute the $L^{2}$ norm of the alternating sum as follows
	\begin{align*}
	\left\|\sum_{n=0}^{2N-1}(-1)^{n}X_{n}\right\|_{L^{2}}^{2}&=\sum_{0\leq m,n\leq 2N-1}(-1)^{m+n}\langle{X_{m},X_{n}}\rangle\\
	&=2\sum_{0\leq m\leq n\leq 2N-1}(-1)^{m+n}\langle{X_{m},X_{n}}\rangle-\sum_{0\leq n\leq 2N-1}(-1)^{2n}\langle{X_{n},X_{n}}\rangle\\
	&=2\sum_{m=0}^{2N-1}(-1)^{m}\langle{X_{m},X_{m}}\rangle\sum_{n=m}^{2N-1}(-1)^{n}-\sum_{m=0}^{2N-1}\langle{X_{m},X_{m}}\rangle\\
	&=2\sum_{m\text{ odd}}(-1)^{2m}\langle{X_{m},X_{m}}\rangle -\sum_{m=0}^{2N-1}\langle{X_{m},X_{m}}\rangle\\
	&=\sum_{m=0}^{2N-1}(-1)^{m+1}\langle{X_{m},X_{m}}\rangle\\
	&=\sum_{m=0}^{2N-1}(-1)^{m+1}\int_{\Omega}X_{m}^{2}\,d\mu.
	\end{align*}
	The sequence $(X_{m}^{2})_{m=1}^{\infty}$ is a submartingale; hence the inequality
	\begin{equation*}
	\int_{\Omega}X_{2n-1}^{2}\,d\mu\leq \int_{\Omega}X_{2n}^{2}\,d\mu,\qquad n\geq 1
	\end{equation*}
	holds. Applying this inequality to the final expression above we deduce
	\begin{equation*}
	\left\|\sum_{n=0}^{2N-1}(-1)^{n}X_{n}\right\|_{L^{2}}^{2}\leq \int_{\Omega}X_{2N-1}^{2}\,d\mu-\int_{\Omega}X_{0}^{2}\,d\mu\leq 2\sup_{n\in\N}\left\|X_{n}\right\|_{L^{2}}^{2}.
	\end{equation*}
	\end{proof}
\bibliographystyle{plain}
\bibliography{biblio}
\end{document}